\tikzset{middlearrow/.style={
        decoration={markings,
            mark= at position 0.5 with {\arrow{#1}} ,
        },
        postaction={decorate}
    }
}
\tikzset{
  on each segment/.style={
    decorate,
    decoration={
      show path construction,
      moveto code={},
      lineto code={
        \path [#1]
        (\tikzinputsegmentfirst) -- (\tikzinputsegmentlast);
      },
      curveto code={
        \path [#1] (\tikzinputsegmentfirst)
        .. controls
        (\tikzinputsegmentsupporta) and (\tikzinputsegmentsupportb)
        ..
        (\tikzinputsegmentlast);
      },
      closepath code={
        \path [#1]
        (\tikzinputsegmentfirst) -- (\tikzinputsegmentlast);
      },
    },
  },
  mid arrow/.style={postaction={decorate,decoration={
        markings,
        mark=at position .5 with {\arrow[#1]{stealth}}
      }}},
}
\numberwithin{equation}{section}
\theoremstyle{plain}
\newtheorem{thm}{\protect\theoremname}[section]
  \theoremstyle{remark}
  \newtheorem{rem}[thm]{\protect\remarkname}
  \theoremstyle{definition}
  \newtheorem{defn}[thm]{\protect\definitionname}
  \theoremstyle{plain}
  \newtheorem{lem}[thm]{\protect\lemmaname}
  \theoremstyle{plain}
  \newtheorem{prop}[thm]{\protect\propositionname}
  \theoremstyle{remark}
  \newtheorem*{rem*}{\protect\remarkname}
\date{ }
\def\definitionname{Definition}
\def\theoremname{Theorem}
\def\propositionname{Proposition}
\def\lemmaname{Lemma}
\def\remarkname{Remark}
  \providecommand{\definitionname}{Definition}
  \providecommand{\lemmaname}{Lemma}
  \providecommand{\propositionname}{Proposition}
  \providecommand{\remarkname}{Remark}
\providecommand{\theoremname}{Theorem}
\newcommand{\N}{\mathbb N}
\newcommand{\R}{\mathbb R}
\newcommand{\cA}{\mathcal A}
\newcommand{\cL}{\mathcal L}
\newcommand{\cV}{\mathcal V}
\def\ds{\displaystyle}
\begin{document}
\date{\today}

\title{A Class of  Infinite Horizon Mean Field Games on Networks}

\author{Yves Achdou \thanks { Univ. Paris Diderot, Sorbonne Paris Cit{\'e}, Laboratoire Jacques-Louis Lions, UMR 7598, UPMC, CNRS, F-75205 Paris, France.
 achdou@ljll.univ-paris-diderot.fr},
Manh-Khang Dao \thanks {IRMAR, Universit{\'e} de Rennes 1, Rennes, France},
Olivier Ley \thanks {IRMAR, INSA Rennes, France},
Nicoletta Tchou \thanks {IRMAR, Universit{\'e} de Rennes 1, Rennes, France, nicoletta.tchou@univ-rennes1.fr}
}
\maketitle

\abstract{ We consider  stochastic mean field games for which the state space is a network.
In the ergodic case, they are described by a system coupling  a Hamilton-Jacobi-Bellman equation and a Fokker-Planck equation,
 whose unknowns are the invariant measure $m$, a value function $u$, and the ergodic constant $\rho$. 
The  function $u$ is continuous and satisfies general Kirchhoff conditions at the vertices. 
The invariant measure $m$ satisfies dual transmission conditions:
 in particular, $m$ is discontinuous across the vertices in general, and the values of $m$ on each side of the vertices satisfy special compatibility conditions. 
Existence and uniqueness are proven, under suitable assumptions.
}

\section{\label{sec: Mean-field-games}Introduction and main results}

Recently, an important research activity on mean field games (MFGs for short) has been initiated since the pioneering works \cite{LL2006-A,LL2006-B,LL2007} of  Lasry and Lions (related ideas have been developed independently in the engineering
literature  by Huang-Caines-Malham{\'e}, see for example \cite{HMC2006,HCM2007-A,HMC2007-B}): it aims at studying the  asymptotic behavior of  stochastic differential games (Nash equilibria) as the number $N$ of agents
tends to infinity.
  In  these models, it is assumed that the agents are all identical and that  an individual agent can hardly influence the outcome of the game. 
 Moreover, each individual strategy is influenced by  some averages of functions  of 
the states of the other agents. In the limit when $N\to +\infty$, a given agent feels the presence of the others  through the 
statistical distribution of the states. Since perturbations of the strategy of a single agent do not influence the statistical states distribution, the latter acts as a parameter  in the control problem to be solved by each  agent.  The delicate question of the passage to the limit is one of the main topics of the  book of Carmona and Delarue, \cite{CD2017}.  When the dynamics of the agents are independent stochastic processes, MFGs
naturally lead to a coupled system of two partial differential equations (PDEs for short), a forward in time Kolmogorov or Fokker-Planck (FP) equation  and a backward Hamilton-Jacobi-Bellman (HJB) equation. The unknown of this system is a pair of functions: the value function of the stochastic optimal control problem solved by a representative agent and the density of the distribution of states. In the infinite horizon limit, one obtains a system of two stationary PDEs.

A very nice introduction to the theory of MFGs is supplied in the notes of Cardaliaguet \cite{cardaliaguet2010}. Theoretical results on the existence of classical solutions to the previously mentioned system of PDEs can be found in \cite{LL2006-A,LL2006-B,LL2007,GPV2014,GPS2015, GP2016-A}.  Weak solutions have been studied in \cite{LL2007,Parma,Pumi,AP2016}. The  numerical approximation of these systems of PDEs has been discussed in \cite{MR2679575,Achdou2013,AP2016}. 

A network (or a graph) is a set of items, referred to as vertices (or nodes/crosspoints), with connections between them referred to as edges.    In the recent years, there has been an increasing interest  in the investigation of dynamical systems and differential equations on networks, in particular in connection with problems of data transmission and traffic management
 (see for example \cite{ MR2328174, MR2448938, MR3640560}). The literature  on optimal control in which the state variable takes its values on a network is recent:  deterministic control problems and related Hamilton-Jacobi equations were studied in   \cite{MR3057137,MR3023064,MR3358634,MR3621434,MR3556345,MR3729588}. Stochastic processes on networks and related Kirchhoff conditions at the vertices  were studied in \cite{FW1993,FS2000}.\\

 The present work is devoted to  infinite horizon stochastic mean field games taking place on networks. 
The most important difficulty will be to deal with the transition conditions at the vertices. The latter  are obtained 
 from the theory of  stochastic control in \cite{FW1993,FS2000}, see Section \ref{subsec: A derivation of the MFG system} below.
In \cite{CM2016}, the first article on MFGs on networks, Camilli and Marchi consider a particular type of Kirchhoff condition  at the vertices for the value function: this condition comes from an assumption which can be informally stated as follows: consider a vertex $\nu$ of the network and 
assume that it is the intersection of $p$ edges $\Gamma_{1},\dots, \Gamma_{p} $, ;
 if, at time $\tau$, the controlled stochastic process $X_t$ associated to a given agent hits $\nu$, 
then the probability that $X_{\tau^+}$ belongs to $\Gamma_i$ is proportional to the diffusion coefficient in $\Gamma_i$. Under this assumption, it can be seen that the density of the distribution of states is continuous at the vertices of the network.  
In the present work, the above mentioned  assumption is not made any longer.  Therefore, it will be seen below that the value function  satisfies more general Kirchhoff conditions,
 and accordingly, that the density of the distribution of states is no longer continuous at the vertices; the continuity condition is then replaced by suitable
 compatibility conditions on the jumps across the vertex.    Moreover, as it will be explained in Remark~\ref{sec:main-result-2} below, more general assumptions on the coupling costs will be made. Mean field games on networks with finite horizon will be considered in a forthcoming paper.
 
After obtaining the  transmission conditions at the vertices for both the value function and the density,
we shall prove existence and uniqueness of weak solutions  of the uncoupled HJB and FP equations (in suitable Sobolev spaces). 
We have chosen to work with weak solutions because it is a convenient way to deal with existence and uniqueness in the stationary regime,
 but also because it is difficult to avoid it in the nonstationary case, see the forthcoming work on finite horizon MFGs.   
  Classical arguments will then lead to the regularity of the solutions.
 Next, we shall establish the existence result for the MFG system by a fixed point argument and a truncation technique.
 Uniqueness will also be  proved under suitable assumptions.

The present work is organized as follows: the remainder of Section~\ref{sec: Mean-field-games} is devoted to
setting the problem and obtaining the system of differential equations and the transmission conditions at the vertices. Section~\ref{sec: linear problems} contains useful results,
 first about some linear boundary value problems  with elliptic equations, then on a pair of linear Kolmogorov and Fokker-Planck equations in duality.
By and large, the existence of weak solutions is obtained by applying Banach-Necas-Babu\v ska theorem to a special pair of Sobolev spaces referred to as $V$ and $W$ below and Fredholm's alternative, and uniqueness comes from a maximum principle. Section~\ref{sec: H-J equation and ergodic problem} is devoted to 
the HJB equation associated with an ergodic problem. Finally,  the proofs of the main results of existence and uniqueness for the MFG system of differential equations are completed in Section~\ref{sec: Mean-field-games}.

\subsection{Networks and function spaces}

\subsubsection{The geometry}\label{sec:geometry}
A bounded network $\Gamma$ (or a bounded connected graph) is a connected subset of $\R ^n$ made of a finite number of  bounded non-intersecting straight segments, referred to as edges, which connect  
nodes referred to as vertices. The finite collection of vertices and the finite set of closed edges are respectively denoted by $\mathcal{V}:=\left\{ \nu_{i}, i\in I\right\}$
and  $\mathcal{E}:=\left\{ \Gamma_{\alpha}, \alpha\in\mathcal{A}\right\}$, 
where $I$ and $A$ are finite sets of indices contained in $\N$.  We assume that for $\alpha,\beta \in \cA$,  if $\alpha\not=\beta$, 
 then $\Gamma_\alpha\cap \Gamma_\beta$ is either empty or made of  a single vertex.
 The length of $\Gamma_{\alpha}$ is denoted by $\ell_{\alpha}$.
Given $\nu_{i}\in\mathcal{V}$, the set of indices of edges that are
adjacent to the vertex $\nu_{i}$ is denoted by
 $\mathcal{A}_{i}=\left\{ \alpha\in\mathcal{A}:\nu_{i}\in\Gamma_{\alpha}\right\} $.
A vertex $\nu_{i}$ is named a {\sl boundary vertex} if $\sharp\left(\mathcal{A}_{i}\right)=1$,
otherwise it is named a {\sl transition vertex}. The set containing all
the boundary vertices is named the {\sl boundary} of the network and is 
denoted by $\partial\Gamma$ hereafter.

The edges $\Gamma_{\alpha}\in\mathcal{E}$  are oriented in an arbitrary manner. In most of what follows, we shall make the 
following arbitrary choice that an edge $\Gamma_{\alpha}\in\mathcal{E}$ connecting two vertices $\nu_i$ and $\nu_j$, with $i<j$ is oriented from $\nu_i$ toward $\nu_j$:
this induces a natural parametrization $\pi_\alpha: [0,\ell_\alpha]\to \Gamma_\alpha=[\nu_i,\nu_j]$:
\begin{equation}
\pi_\alpha(y)=(\ell_\alpha-y)\nu_i+y\nu_j\quad\text{for } y\in  [0,\ell_\alpha]. \label{eq: parametrization}
\end{equation}
For a function 
$v:\Gamma\rightarrow\mathbb{R}$ and $\alpha\in\mathcal{A}$,
we define $v_\alpha: (0,\ell_\alpha)\rightarrow\mathbb{R}$ by
\begin{displaymath}
v_{\alpha} (x):=v|_{\Gamma_{\alpha}}\circ\pi_{\alpha} (x),\quad \hbox{ for all }x\in (0,\ell_\alpha).  
\end{displaymath}

\begin{rem}\label{rem:new network}
In what precedes, the edges have been arbitrarily  oriented from the vertex with the smaller index toward the vertex with the larger one.
 Other choices are of course possible.
 In particular, by possibly dividing a single edge into two, adding thereby new artificial vertices, it is always possible to assume that for all vertices $\nu_i\in\mathcal{V}$,
\begin{equation}
\text{either }\pi_{\alpha}(\nu_i)=0,\text{ for all }\alpha\in\mathcal{A}_i\text{ or }\pi_{\alpha}(\nu_i)=\ell_\alpha,\text{ for all }\alpha\in\mathcal{A}_i.\label{oriented networks}
\end{equation} 
This idea was used by  Von Below in \cite{Below1988}: some edges of $\Gamma$ are cut into two by adding artificial vertices
so that the new oriented network $\overline{\Gamma}$ has the property \eqref{oriented networks}, see Figure \ref{fig: Network intro} for an example.

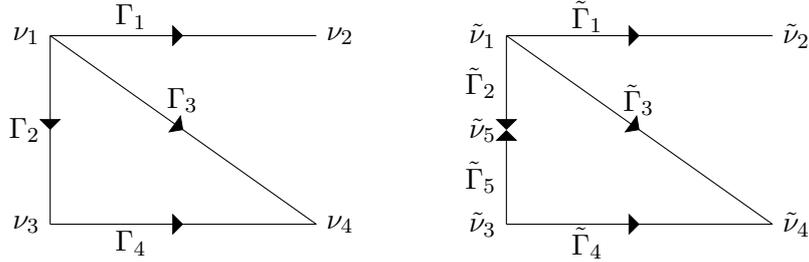
\begin{figure}[H]
  \begin{center}
    \begin{tikzpicture}[scale=0.5, trans/.style={thick,<->,shorten >=2pt,shorten <=2pt,>=stealth} ]
      \draw (0.8,5.5)node[left] {$\Gamma_1$};
      \draw[middlearrow={triangle 90}] (-2,5) node [left]{$\nu_1$} -- (5,5)  node [right]{$\nu_2$};
      \draw (-2,2.5)node[left] {$\Gamma_2$};
      \draw[middlearrow={triangle 90}] (-2,5) -- (-2,0) node [left]{$\nu_3$};
      \draw (0.8,3.3)node[right] {$\Gamma_3$};
      \draw[middlearrow={triangle 90}] (-2,5) -- (5,0) node [right]{$\nu_4$};
      \draw (0.8,-0.5)node[left] {$\Gamma_4$};
      \draw[middlearrow={triangle 90}] (-2,0) -- (5,0);

      \draw (12.8,5.5)node[left] {$\tilde\Gamma_1$};
      \draw[middlearrow={triangle 90}] (10,5) node [left]{$\tilde \nu_1$} -- (17,5) node [right]{$\tilde\nu_2$};
      \draw (10,3.75)node[left] {$\tilde \Gamma_2$};
      \draw[-{triangle 90}] (10,5) -- (10,2.5) node [left]{$\tilde \nu_5$};
       \draw (10,1.25)node[left] {$\tilde \Gamma_5$};
      \draw[-{triangle 90}] (10,0)  node [left]{$\tilde \nu_3$} -- (10,2.5) ;
      \draw (12.8,3.3)node[right] {$\tilde\Gamma_3$};
      \draw[middlearrow={triangle 90}] (10,5) -- (17,0) node [right]{$\tilde \nu_4$};
      \draw (12.8,-0.5)node[left] {$\tilde\Gamma_4$};
      \draw[middlearrow={triangle 90}] (10,0) -- (17,0);
    \end{tikzpicture}
    \caption{Left: the network $\Gamma$ in which the edges are oriented  toward the vertex with larger index ($4$ vertices and $4$ edges). Right: a new network $\tilde \Gamma$ obtained by adding an artificial vertex ($5$ vertices and $5$ edges): the oriented edges sharing a given vertex $\nu$ either have all their starting point equal $\nu$, or have all  their terminal point equal $\nu$.}
   \label{fig: Network intro}
  \end{center}
\end{figure}

In Sections~\ref{sec:class-stoch-proc} and  \ref{subsec: A derivation of the MFG system} below, especially when dealing with stochastic calculus, it will be convenient to assume that property \eqref{oriented networks} holds. In the remaining part of the paper, it will be convenient to work with the original network, i.e. without the additional artificial vertices and with the orientation of the edges that has been chosen initially.
\end{rem}

\subsubsection{Function spaces}
\label{sec:function-spaces}
The set of continuous functions on $\Gamma$ is denoted by $C(\Gamma)$
and we set
\[
PC\left(\Gamma\right):=\left\{ v : \Gamma \to \R \;:  \hbox{  for all $\alpha\in\mathcal{A}$}, \left|
    \begin{array}[c]{l}
\hbox{$v_\alpha\in C(0,\ell_\alpha)$ }\\
  \hbox{ $v_\alpha$ can be extended by continuity to $[0,\ell_\alpha]$.}     
    \end{array} \right.
\right\}.
\]
By the definition of piecewise continuous functions $v\in PC(\Gamma)$, for all $\alpha\in \mathcal{A}$, it is possible to extend $v|_{\Gamma_\alpha}$ by continuity   at the endpoints of $\Gamma_\alpha$: if $\Gamma_{\alpha}=[\nu_i,\nu_j]$, we set
\begin{equation}
v|_{\Gamma_{\alpha}}\left(x\right)=\begin{cases}
v_{\alpha}\left(\pi_{\alpha}^{-1}\left(x\right)\right), & \text{if }x\in\Gamma_{\alpha}\backslash\mathcal{V},\\
\displaystyle v_{\alpha}\left(0\right):=\lim_{y\rightarrow0^{+}}v_{\alpha}\left(y\right), & \text{if }x=\nu_{i},\\
\displaystyle v_{\alpha}\left(\ell_{\alpha}\right):=\lim_{y\rightarrow\ell_{\alpha}^{-}}v_{\alpha}\left(y\right), & \text{if }x=\nu_{j}.
\end{cases}\label{eq: v at the vertices}
\end{equation}
For $m\in\mathbb{N}$, the space of $m$-times continuously differentiable functions on $\Gamma$ is defined by
\[
C^{m}\left(\Gamma\right):=\left\{ v\in C\left(\Gamma\right):v_{\alpha}\in C^{m}\left(\left[0,\ell_{\alpha}\right]\right)\text{ for all }\alpha\in\mathcal{A}\right\} ,
\]
and is endowed with the norm
$ 
\left\Vert v\right\Vert _{C^{m}\left(\Gamma\right)}:= {\sum}_{\alpha\in\mathcal{A}}{\sum}_{k\le m}\left\Vert \partial^{k}v_{\alpha}\right\Vert _{L^{\infty}\left(0,\ell_{\alpha}\right)}
$.
For $\sigma\in\left(0,1\right)$, the space $C^{m,\sigma}\left(\Gamma\right)$, 
contains the functions
$v\in C^{m}\left(\Gamma\right)$ such that $\partial^{m}v_{\alpha}\in C^{0,\sigma}\left(\left[0,\ell_{\alpha}\right]\right)$
for all $\alpha\in \mathcal{A}$; it is endowed with the norm
$\displaystyle{
\left\Vert v\right\Vert _{C^{m,\sigma}\left(\Gamma\right)}:=\left\Vert v\right\Vert _{C^{m}\left(\Gamma\right)}+\sup_{\alpha\in\mathcal{A}}\sup_{y\ne z \atop y,z\in\left[0,\ell_{\alpha}\right]}\dfrac{\left|\partial^{m}v_{\alpha}\left(y\right)-\partial^{m}v_{\alpha}\left(z\right)\right|}{\left|y-z\right|^{\sigma}}}$.

For a positive integer $m$ and a function $v\in C^{m}\left(\Gamma\right)$,
we set for $k\le m$,
\begin{equation}
\partial^{k}v\left(x\right)=\partial^{k}v_{\alpha}\left(\pi_{\alpha}^{-1}\left(x\right)\right)\text{ if }x\in\Gamma_{\alpha}\backslash{\mathcal{V}}.\label{eq: derivative}
\end{equation}

Notice that $v\in C^{k}\left(\Gamma\right)$ is continuous on $\Gamma$
but that the derivatives $\partial^{l}v,$ $0<l\le k$ are not defined at the vertices.
For a vertex $\nu$, we define $\partial_{\alpha}v\left(\nu\right)$ as the {\sl outward} directional
derivative of $v|_{\Gamma_{\alpha}}$ at $\nu$ as follows:
\begin{equation}
\partial_{\alpha}v\left(\nu\right):=\begin{cases}
{\displaystyle \lim_{h\rightarrow0^{+}}
\dfrac{v_{\alpha}\left(0\right)-v_{\alpha}\left(h\right)}{h}}, & \text{if }\nu=\pi_{\alpha}\left(0\right),\\
{\displaystyle \lim_{h\rightarrow0^{+}}
\dfrac{v_{\alpha}\left(\ell_{\alpha}\right)-v_{\alpha}\left(\ell_{\alpha}-h\right)}{h}}, & \text{if }\nu=\pi_{\alpha}\left(\ell_{\alpha}\right).
\end{cases}\label{eq: inward derivative}
\end{equation}
For all $i\in I$ and $\alpha\in \cA_i$, setting 
\begin{equation}
  \label{eq:1}
 n_{i\alpha}=\left\{
 \begin{array}[c]{rl}
 1 & \text{if }   \nu_i =    \pi_{\alpha} (\ell_\alpha),\\
 -1 & \text{if }  \nu_i =    \pi_{\alpha} (0),   
 \end{array}\right.
\end{equation}
we have
\begin{equation}
\label{eq:2}
  \partial_\alpha  v(\nu_i)= n_{i\alpha}\,\partial v|_{\Gamma_{\alpha}}(\nu_i)= n_{i\alpha} \, \partial v_ \alpha (\pi^{-1}_\alpha (\nu_i)) .
\end{equation}


\begin{rem}\label{sec:netw-funct-spac}
Changing the orientation of the edge does not change the value of $\partial_\alpha v(\nu)$ in (\ref{eq: inward derivative}). 
\end{rem}

If for all $\alpha\in\mathcal{A}$, $v_{\alpha}$ is Lebesgue-integrable on $(0,\ell_{\alpha})$,
 then the integral of $v$ on $\Gamma$
is defined by
$
\int_{\Gamma}v\left(x\right)dx=\sum_{\alpha\in\mathcal{A}}\int_{0}^{\ell_{\alpha}}v_{\alpha}\left(y\right)dy$. The space
$
L^{p}\left(\Gamma\right)  =\left\{ v:v|_{\Gamma_{\alpha}}\in L^{p}\left(\Gamma_{\alpha}\right)\text{ for all \ensuremath{\alpha\in\mathcal{A}}}\right\}$,  $p\in [1,\infty]$, 
is endowed with the norm $\left\Vert v\right\Vert _{L^{p}\left(\Gamma\right)}:=  \left( \sum_{\alpha\in\mathcal{A}}\left\Vert v_{\alpha}\right\Vert _{L^{p} \left(0,\ell_{\alpha}\right)}^p \right)^{\frac 1 p}$
if $1\le p<\infty$, and $\max_{\alpha\in \cA} \|v_\alpha\|_{L^\infty\left(0,\ell_{\alpha}\right)}$ if $p=+\infty$.
We shall also need to deal with functions on $\Gamma$ whose restrictions to the edges are  weakly-differentiable:
we shall use the same notations for the weak derivatives. Let us introduce  Sobolev spaces on $\Gamma$:
\begin{defn} \label{def: Sobolev space}
For any integer $s\ge 1$ and any real number $p\ge 1$, the Sobolev space $W^{s,p}(\Gamma)$ is defined as follows:
$
W^{s,p}(\Gamma):=\left\{ v\in C\left(\Gamma\right):v_{\alpha}\in W^{s,p}\left(0,\ell_{\alpha}\right)\text{ for all }\alpha\in\mathcal{A}\right\}$,  
and endowed with the norm
$
\left\Vert v\right\Vert _{W^{s,p}\left(\Gamma\right)}=\left(\sum^{s}_{k=1}\sum_{\alpha\in\mathcal{A}}
\left\Vert \partial^{k}v_{\alpha}\right\Vert _{L^{p}\left(0,\ell_{\alpha}\right)}^{p}+
\left\Vert v\right\Vert _{L^p(\Gamma)}^{p}\right)^{\frac 1 p}$.
We also set $H^s(\Gamma)= W^{s,2}(\Gamma)$.
\end{defn}

\subsection{A class of stochastic processes on $\Gamma$}
\label{sec:class-stoch-proc}
After rescaling the edges, it may be assumed that $\ell_{\alpha}=1$ for all $\alpha\in\mathcal{A}$. 
Let $ \mu_{\alpha} , \alpha\in\mathcal{A}$ and  $ p_{i\alpha}, i\in I,\alpha\in\mathcal{A}_{i}$
be positive constants such that $\sum_{\alpha\in\mathcal{A}_{i}}p_{i\alpha}=1$. Consider also a real valued function $a\in PC(\Gamma)$.

As in Remark~\ref{rem:new network}, we make the assumption (\ref{oriented networks}) by possibly adding artificial nodes: if $\nu_i$ is such an artificial node, then $\sharp(\cA_i)=2$, and we assume that $p_{i\alpha}=1/2$ for $\alpha\in \cA_i$. The diffusion parameter $\mu$ has the same value on the two sides of an artificial vertex. Similarly, the function $a$ 
does not have jumps across an artificial vertex.

 Let us consider the linear  differential operator:
\begin{equation}
  \label{eq:3}
\mathcal{L}u\left(x\right)=\mathcal{L}_{\alpha}u\left(x\right):=
\mu_{\alpha}\partial^{2}u\left(x\right)+a|_{\Gamma_{\alpha}}\left(x\right)\partial u\left(x\right), \quad 
\hbox{if }x\in \Gamma_\alpha,
\end{equation}
with domain
\begin{equation}
D\left(\mathcal{L}\right):=\left\{ u\in C^{2}\left(\Gamma\right):\sum_{\alpha\in\mathcal{A}_{i}}p_{i\alpha}\partial_{\alpha}u\left(\nu_{i}\right)=0,\; \hbox{ for all }i\in I \right\}.\label{Kirchhoff condition}
\end{equation}
\begin{rem}\label{sec:class-stoch-proc-2}
  Note that  in the definition of $D\left(\mathcal{L}\right)$,  the condition at boundary vertices boils down to a Neumann condition.
\end{rem}

Freidlin and Sheu proved in \cite{FS2000} that 
\begin{enumerate}
\item The operator $\cL$ is the infinitesimal generator of a Feller-Markov process on $\Gamma$ with continuous sample paths. The operators $\cL_\alpha$ and the transmission conditions at the vertices
\begin{equation}
  \label{eq:4}
  \sum_{\alpha \in \cA_{i} } p_{i\alpha} \partial_\alpha u(\nu_i)=0
\end{equation}
define such a process in a unique way, see also \cite[Theorem 3.1]{FW1993}. The process can be written $(X_t, \alpha_t)$ where  $X_t\in \Gamma_{\alpha_t}$.
If $X_t=\nu_i$, $i\in I$, $\alpha_t$ is arbitrarily chosen as the smallest index in $\cA_i$. Setting $x_t= \pi_{\alpha_t}(X_t)$
 defines the process $x_t$ with values in $[0,1]$. 
\item There exist
  \begin{enumerate}
  \item  a one dimensional Wiener process $W_t$,
  \item  continuous non-decreasing processes $\ell_{i,t}$, $i\in I$,  which are measurable with respect to the $\sigma$-field generated by $(X_t,\alpha_t)$,
  \item   continuous non-increasing processes $h_{i,t}$, $i\in I$, which are measurable with respect to the $\sigma$-field generated by $(X_t,\alpha_t)$,
  \end{enumerate}
 such that
 \begin{eqnarray}
    \label{eq:5}
  dx_t= \mu_{\alpha_t} dW_t + a_{\alpha_t}(x_t) dt  + d\ell_{i,t} +d h_{i,t},\\
\hbox{$\ell_{i,t}$ increases only when $X_t=\nu_i$ and $x_t=0$,} \nonumber \\
\hbox{$h_{i,t}$ decreases only when $X_t=\nu_i$ and $x_t=1$.} \nonumber
 \end{eqnarray}

\item The following Ito formula holds:  for any real valued function $u \in C^2(\Gamma)$:
\end{enumerate}
\begin{equation}
  \label{eq:6}
  \begin{split}
  u(X_t)=& u(X_0)
\\  &+ \sum_{\alpha\in \mathcal{A}} \int_0^t    \mathds{1}_{\{X_s \in \Gamma_\alpha\backslash \mathcal {V}\}} 
 \left(  \mu_\alpha \partial^2 u(X_s) + a(X_s)\partial u(X_s)  ds   
+ \sqrt{2 \mu_\alpha} \partial u(X_s) dW_s \right)\\ 
& + \sum_{i\in I}  \sum_{\alpha \in \cA_{i}} p_{i\alpha} \partial_\alpha u(\nu_i) (\ell_{i,t}+h_{i,t}) .
  \end{split}
\end{equation}

\begin{rem}
  \label{sec:class-stoch-proc-1}
The assumption that all  the edges have unit length is not restrictive, because we can always rescale the constants $\mu_\alpha$ and the piecewise continuous function $a$. The Ito formula in (\ref{eq:6}) holds when this assumption is not satisfied.
\end{rem}
Consider the invariant measure associated with the process $X_t$. We may assume that it is absolutely continuous 
 with respect to the Lebesgue measure on $\Gamma$. Let $m$ be its density:  
 \begin{equation}
   \label{eq:7}
\mathbb{E}\left[u\left(X_t\right)\right]:=\int_{\Gamma}u\left(x\right)m\left(x\right)dx, \quad  \hbox{ for all } u\in PC(\Gamma).
 \end{equation}
We focus on functions $u\in D\left(\mathcal{L}\right)$. Taking the time-derivative of each member of (\ref{eq:7}),  Ito's formula (\ref{eq:6}) and  (\ref{eq:4}) lead to $
\mathbb{E}\left[  \mathds{1}_{\{X_t\notin \cV\}}   \left(a\partial u (X_t)+\mu\partial^{2}u(X_t)\right) \right] =0$.  
This implies that 
\begin{equation}
  \label{eq:8}
  \int_{\Gamma}\left(a(x)\partial u(x)+\mu\partial^{2}u(x)\right)m(x)dx=0.
\end{equation}
Since for $\alpha \in \cA$, any smooth function on $\Gamma$ compactly supported in $\Gamma_\alpha \backslash \cV$ clearly
belongs to $D({\mathcal L})$, (\ref{eq:8}) implies that $m$ satisfies 
   \begin{equation}
     \label{eq:9}
-\mu_{\alpha}\partial^{2}m +\partial\left(m a\right)=0
   \end{equation}
in the sense of distributions in the edges $\Gamma_\alpha \backslash \cV$, $\alpha\in \cA$. This implies that 
there exists a real number $c_\alpha$ such that  
\begin{equation}
  \label{eq:10}
-\mu_{\alpha}\partial m|_{\Gamma_\alpha} =- m|_{\Gamma_\alpha} a|_{\Gamma_\alpha}+c_\alpha.
\end{equation}
So $m|_{\Gamma_\alpha}$ is $C^1$ regular, and (\ref{eq:10}) is true pointwise. 
Using this information and recalling (\ref{eq:8}), we find that, for all $u\in D(\cL)$,
\begin{displaymath}
\sum_{i\in I} \sum_{\alpha\in\mathcal{A}_{i}}   \mu_{\alpha}m|_{\Gamma_{\alpha}}\left(\nu_{i}\right)\partial_{\alpha}u\left(\nu_i\right)  + \sum_{\beta\in\cA} \int_{\Gamma_\beta} \partial u|_{\Gamma_\beta}(x) \Bigl( -\mu_\beta \partial m|_{\Gamma_\beta}(x) +a|_{\Gamma_\beta}(x)m|_{\Gamma_\beta}(x) \Bigr) dx=0.
\end{displaymath}
This and (\ref{eq:10}) imply that 
\begin{equation}
  \label{eq:11}
\sum_{i\in I}\sum_{\alpha\in\mathcal{A}_{i}} \mu_{\alpha}m|_{\Gamma_{\alpha}}\left(\nu_{i}\right)\partial_{\alpha}u\left(\nu_i\right) + \sum_{\beta\in\cA} c_\beta \int_{\Gamma_\beta} \partial u|_{\Gamma_\beta}(x)dx=0.
\end{equation}

For all $i\in I$, it is possible to choose a function $u\in D(\cL)$ such that
\begin{enumerate}
\item  $u(\nu_j)=\delta_{i,j}$ for all $j\in I$;
\item $\partial_\alpha  u(\nu_j)=0$ for all $j\in I$ and $\alpha\in \cA_j$. 
\end{enumerate}
Using such a test-function in (\ref{eq:11}) implies that for all $i\in I$,
\begin{equation}
\label{eq:12}
0=
\sum_{\beta\in \mathcal{A}} c_\beta
    \int_{\Gamma_\beta} \partial u|_{ \Gamma_\beta}(x)dx =
 \sum_{j\in I}\sum_{\alpha\in \mathcal{A}_j}
c_\alpha n_{j\alpha}u|_{ \Gamma_\alpha}(\nu_j)
=
 \sum_{\alpha\in \cA_i} n_{i\alpha} c_\alpha,
\end{equation}
where $n_{i\alpha}$ is defined in (\ref{eq:1}).

For all $i\in I$ and $\alpha,\beta\in \cA_i$, it is possible to choose a function $u\in D(\cL)$ such that 
\begin{enumerate}
\item $u$ takes the same value at each vertex of $\Gamma$, thus $\int_{\Gamma_\delta} \partial u|_{\Gamma_\delta}(x)dx=0$ for all $\delta\in \cA$;
\item $\partial_\alpha  u(\nu_i)= 1/p_{i\alpha}$,
 $\partial_\beta  u(\nu_i)= -1/p_{i\beta}$ 
and all the other first order directional derivatives of $u$ at the vertices are $0$.
\end{enumerate}
Using such a test-function  in (\ref{eq:11}) yields
\[
\dfrac{m|_{\Gamma_{\alpha}}\left(\nu_{i}\right)}{\gamma_{i\alpha}}=\dfrac{m|_{\Gamma_{\beta}}\left(\nu_{i}\right)}{\gamma_{i\beta}},\quad\text{for all }\alpha,\beta\in\mathcal{A}_{i},\nu_{i}\in\mathcal{V},
\] 
in which
\begin{equation}
  \label{eq:13} \gamma_{i\alpha}= \frac {p_{i\alpha}}{\mu_\alpha},  \quad \hbox{for all } i\in I,   \alpha\in \cA_i. 
\end{equation}

Next, for $i\in I$,  multiplying (\ref{eq:10}) at $x=\nu_i$ by $n_{i\alpha}$ for all  $\alpha\in \cA_i$,
then  summing over all $\alpha\in \cA_i$, we get $
  \sum_{\alpha\in \cA_i}  \mu_{\alpha}\partial_\alpha m \left(\nu_{i}\right) - n_{i\alpha}\Bigl( m|_{\Gamma_\alpha}\left(\nu_{i}\right) a|_{\Gamma_\alpha}\left(\nu_{i}\right)-c_\alpha\Bigr) =0$,
and using (\ref{eq:12}), we obtain that
\begin{equation}
  \label{eq:14}
\sum_{\alpha\in\mathcal{A}_{i}}
 \mu_{\alpha}\partial_{\alpha}m\left(\nu_{i}\right)- n_{i\alpha}
a|_{\Gamma_{\alpha}}\left(\nu_{i}\right)m|_{\Gamma_{\alpha}}\left(\nu_{i}\right)=0,\quad\text{for all }i\in I.
\end{equation}

Summarizing, we get the following boundary value problem  for $m$ (recalling that the coefficients $n_{i\alpha}$ are defined in (\ref{eq:1})):
\begin{equation}
  \label{eq:15}
\left\{  \begin{split}
-\mu_{\alpha}\partial^{2}m+\partial\left(m a\right)=0,\quad \quad  & x\in\left(\Gamma_{\alpha}\backslash\mathcal{V}\right),\,\alpha\in\mathcal{A},\\
\sum_{\alpha\in\mathcal{A}_{i}}\mu_{\alpha}\partial_{\alpha}m\left(\nu_{i}\right)
-  n_{i\alpha}  a|_{\Gamma_{\alpha}} (\nu_i) m|_{\Gamma_{\alpha}}\left(\nu_{i}\right)=0,\quad \quad  & \nu_{i}\in\mathcal{V},\\
\dfrac{m|_{\Gamma_{\alpha}}\left(\nu_{i}\right)}{\gamma_{i\alpha}}=\dfrac{m|_{\Gamma_{\beta}}\left(\nu_{i}\right)}{\gamma_{i\beta}},\quad \quad  & \alpha,\beta\in\mathcal{A}_{i},\nu_{i}\in\mathcal{V}.    
  \end{split}\right.
\end{equation}

\subsection{Formal derivation of the MFG system on $\Gamma$}\label{subsec: A derivation of the MFG system}
 Consider a continuum of indistinguishable agents moving on the network $\Gamma$. The set of  Borel probability
measures on $\Gamma$ is denoted by  $\mathcal{P}\left(\Gamma\right)$. 
Under suitable assumptions,  the theory of MFGs asserts that the 
distribution of states is absolutely continuous with respect to  Lebesgue measure on $\Gamma$. Hereafter,  $m$ stands for the density of the distribution of states: $m\ge 0$ and $\int_{\Gamma}m(x)dx=1$.

The  state of a representative agent at time $t$ is a time-continuous controlled stochastic process $X_t$ in $\Gamma$,
 as defined in Section~\ref{sec:class-stoch-proc}, where the control is the drift $a_t$, supposed to be of the form $a_t=a(X_t)$.
The function $X\mapsto a(X)$ is the feedback. 

For a representative agent, the optimal control problem  is of the form:
\begin{equation}
\rho:=\inf_{a_s}\liminf_{T\rightarrow+\infty}\dfrac{1}{T}{\mathbb{E}_{x}
\left[\int_{0}^{T}L\left(X_s,a_{s}\right)+V\left[m\left(\cdot,s\right) (X_s)\right]ds \right]},\label{ergodic constant}
\end{equation}
where $\mathbb{E}_{x}$ stands for the expectation conditioned by the event $X_0=x$. The functions and operators involved in (\ref{ergodic constant})
will be described below.

Let us assume that there is an optimal feedback law, i.e. a function $a^\star$ defined on $\Gamma$ which is sufficiently regular in the edges of the network, such that the optimal control at time $t$ is given by $a_t ^\star=a^\star(X_t)$. Then, almost surely if $X_t\in \Gamma_\alpha\backslash \cV$,
$  d  \pi_{\alpha}^{-1} (X_t)=  a^\star_{\alpha}(\pi_{\alpha}^{-1} (X_t)) dt + \sqrt{ 2\mu_{\alpha}} dW_t$.  An informal way to describe the behavior of the process at the vertices is  as follows: if $X_t$ hits $\nu_{i}\in\mathcal{V}$, then it enters $\Gamma_{\alpha}$, $\alpha\in\mathcal{A}_{i}$  with probability $p_{i\alpha}>0$.

Let us discuss the ingredients in (\ref{ergodic constant}): the running cost depends separately on the control and on the  distribution of states.
 The contribution of the  distribution of states involves the coupling cost operator, which can either be nonlocal, i.e.
  $V:\mathcal{P}\left(\Gamma\right)\rightarrow   \mathcal{C}^2 (\Gamma)$,
or local, i.e. $V[m](x)= F(m(x))$   assuming that $m$ is absolutely continuous with respect to the Lebesgue measure, where $F: \R^+ \to \R$ is a continuous function.

The contribution of the control involves the  Lagrangian $L$, i.e. a real valued function defined on 
$ \left(\cup_{\alpha \in \mathcal{A}} \Gamma_\alpha \backslash \mathcal{V} \right) \times \R$. 
 If $x\in \Gamma_\alpha \backslash \mathcal{V}$ and $a\in \R$, $L(x,a)=L_\alpha(\pi_\alpha^{-1}(x),a)$, where $L_\alpha$ is a continuous real valued function defined on $[0,\ell_\alpha]\times \R$. We assume that $\lim_{|a|\to \infty} \inf_{y\in \Gamma_\alpha}  {L_\alpha(y,a)}/{|a|}=+\infty$. 
Further assumptions on $L$ and $V$ will be made below.

Under suitable assumptions,  the Ito calculus recalled in Section~\ref{sec:class-stoch-proc}
 and the dynamic programming principle lead to the following ergodic 
Hamilton-Jacobi equation on $\Gamma$,  more precisely the following boundary value problem:
\begin{equation}
\begin{cases}
-\mu_{\alpha}\partial^{2}v+H\left(x,\partial v\right)+\rho=V\left[m\right](x), & x\in\left(\Gamma_{\alpha}\backslash\mathcal{V}\right),\alpha\in\mathcal{A},\\
{\displaystyle \sum_{\alpha\in\mathcal{A}_{i}}\gamma_{i\alpha}\mu_{\alpha}\partial_{\alpha}v\left(\nu_{i}\right)=0,} 
& \nu_{i}\in\mathcal{V},\\
v|_{\Gamma_{\alpha}}\left(\nu_{i}\right)=v|_{\Gamma_{\beta}}\left(\nu_{i}\right), & \alpha,\beta\in\mathcal{A}_{i},\nu_{i}\in\mathcal{V},\\ \ds \int_{\Gamma}v(x)dx=0.
\end{cases}\label{eq: introduction HJ}
\end{equation}
We refer to  \cite{LL2006-A,LL2007} for the interpretation of the value function $v$ and the ergodic cost $\rho$.

Let us comment  the different equations in (\ref{eq: introduction HJ}):
\begin{enumerate}
\item The Hamiltonian $H$ is a  real valued function defined
on $ \left(\cup_{\alpha \in \mathcal{A}} \Gamma_\alpha \backslash \mathcal{V} \right) \times \R$. 
For $x\in \Gamma_\alpha \backslash \mathcal{V} $ and $p\in \R$,
\[
H\left(x,p\right)=\sup_{a}\left\{ -a p-L_{\alpha}\left(\pi_\alpha^{-1}(x),a\right)\right\},
\]
The Hamiltonian is supposed to be $ C^1$  and coercive with respect to $p$ uniformly in $x$. 
\item  The second equation in \eqref{eq: introduction HJ} is  a Kirchhoff
transmission condition (or Neumann boundary condition if $\nu_i\in\partial\Gamma$);  it is the consequence of the assumption
on the behavior of $X_s$ at vertices. It involves the positive constants  $\gamma_{i\alpha}$ defined in (\ref{eq:13}).
\item
 The third condition means in particular that $v$ is continuous at the vertices.
\item 
The fourth equation is a normalization condition.
\end{enumerate}
If \eqref{ergodic constant} has a smooth solution, then it provides a feedback law for the optimal control problem, i.e.
 \[
 a^{\star}(x)=-\partial_{p}H\left(x,\partial v\left(x \right)\right).
 \]
At the MFG equilibrium, $m$ is the density of the invariant measure associated with the optimal feedback law, so, according to Section~\ref{sec:class-stoch-proc}, it satisfies (\ref{eq:15}), 
where $a$ is replaced by $a^\star=-\partial_{p}H\left(x,\partial v\left(x\right)\right)$. We end up with the following system:
\begin{equation}
{\displaystyle \begin{cases}
-\mu_{\alpha}\partial^{2}v+H\left(x,\partial v\right)+\rho=V\left([m]\right), & x\in\Gamma_{\alpha}\backslash\mathcal{V},\alpha\in \mathcal{A},\\
\mu_{\alpha}\partial^{2}m+\partial\left(m\partial_{p}H\left(x,\partial v\right)\right)=0, & x\in\Gamma_{\alpha}\backslash\mathcal{V},\alpha\in \mathcal{A},\\
{\displaystyle \sum_{\alpha\in\mathcal{A}_{i}}\gamma_{i\alpha}\mu_{\alpha}\partial_{\alpha}\left(\nu_{i}\right)=0,} & \nu_{i}\in\mathcal{V},\\
{\displaystyle \sum_{\alpha\in\mathcal{A}_{i}}\left[\mu_{\alpha}\partial_{\alpha}m  \left(\nu_{i}\right)+
n_{i\alpha} \partial_{p}H_{\alpha}\Bigl(\nu_{i}, \partial v|_{\Gamma_\alpha} (\nu_i)\Bigr)m|_{\Gamma_{\alpha}}\left(\nu_{i}
\right)\right]=0,} & \nu_{i}\in\mathcal{V},\\
v|_{\Gamma_{\alpha}}\left(\nu_{i}\right)=v|_{\Gamma_{\beta}}\left(\nu_{i}\right), & \alpha,\beta\in\mathcal{A}_{i},\nu_{i}\in\mathcal{V},
\\\dfrac{m|_{\Gamma_{\alpha}}\left(\nu_{i}\right)}{\gamma_{i\alpha}}=\dfrac{m|_{\Gamma_{\beta}}\left(\nu_{i}\right)}{\gamma_{i\beta}}, & \alpha,\beta\in\mathcal{A}_{i},\nu_{i}\in\mathcal{V},\\
\ds \int_{\Gamma}v\left(x\right)dx=0,\quad\int_{\Gamma}m\left(x\right)dx=1,\quad m\ge0.
\end{cases}}\label{eq: MFG system}
\end{equation}
At  a vertex $\nu_i$, $i\in I$, the transmission conditions for both  $v$ and $m$
 consist of $d_{\nu_{i}}= \sharp( \cA_i)$ linear relations, which is the appropriate number of relations to 
have a well posed problem. If $\nu_i\in \partial \Gamma$, there is of course only one Neumann like  condition for $v$ and for $m$.

\begin{rem}
In \cite{CM2016}, the authors assume  that   $ \gamma_{i\alpha}=\gamma_{i\beta}$ for all $i\in I$, $\alpha,\beta\in\mathcal{A}_i$. Therefore, the density  $m$ does not have jumps across the transition vertices.
\end{rem}

\subsection{Assumptions and main results}

\subsubsection{Assumptions}

\label{sec:assumptions}
Let $(\mu_\alpha)_{\alpha\in \mathcal{A}}$ be a family of positive numbers, and 
for each $i\in I$ let $( \gamma_{i\alpha})_{\alpha\in\mathcal{A}_{i}} $ be a family of positive numbers such that 
$\sum_{\alpha\in \cA_i} \gamma_{i\alpha}\mu_\alpha=1$.
\\
 Consider the Hamiltonian $H:\Gamma\times\mathbb{R}\rightarrow\mathbb{R}$,
with $H|_{\Gamma_{\alpha}}:\Gamma_{\alpha}\times\mathbb{R}\rightarrow\mathbb{R}$.
We assume that, for some positive constants  $C_{0},C_{1},C_{2}$ 
and $q\in\left(1,2\right]$, 
\begin{align}
 & H_{\alpha}\in C^{1}\left(\left[0,\ell_{\alpha}\right]\times\mathbb{R}\right);\label{eq: Hamiltonian}\\
 & H_{\alpha}\left(x,\cdot\right)\text{is convex in }p\text{ for each }x\in\left[0,\ell_{\alpha}\right];\label{eq: Hamiltonian is convex}\\
 & H_{\alpha}\left(x,p\right)\ge C_{0}\left|p\right|^{q}-C_{1}\text{ for }\left(x,p\right)\in\left[0,\ell_{\alpha}\right]\times\mathbb{R};\label{eq: super quadratic}\\
 & \left|\partial_{p}H_{\alpha}\left(x,p\right)\right|\le C_{2}\left(\left|p\right|^{q-1}+1\right)\text{ for }\left(x,p\right)\in\left[0,\ell_{\alpha}\right]\times\mathbb{R}.\label{eq: derivative of H is bounded}
\end{align}
\begin{rem}
From \eqref{eq: derivative of H is bounded}, there
exists a positive constant $C_{q}$ such that
\begin{equation}
\left|H_{\alpha}\left(x,p\right)\right|\le C_{q}\left(\left|p\right|^{q}+1\right),\quad\text{for all }\left(x,p\right)\in\left[0,\ell_{\alpha}\right]\times\mathbb{R}.\label{eq: sup quadractic}
\end{equation}
\label{rem: derivative of H is bounded}
\end{rem}
Below, we shall focus on local coupling operators $V$, namely
\begin{equation}
V\left[\tilde{m}\right]\left(x\right)=F\left(m\left(x\right)\right)\text{ with }
F\in C\left(\left[0,+\infty\right); \R \right),\label{eq: coupling F}
\end{equation}
for all $\tilde{m}$ which are absolutely continuous with respect
to the Lebesgue measure and such that $d\tilde{m}\left(x\right)=m\left(x\right)dx$.
We shall also suppose that   $F$ is bounded from below, i.e.,
there exists a positive constant $M$ such that
\begin{equation}
F\left(r\right)\ge-M,\quad\text{for all }r\in\left[0,+\infty\right).\label{ineq: F is bounded from below}
\end{equation}

\subsubsection{Function spaces related to the Kirchhoff conditions}
\label{sec:funct-spac-relat}
Let us introduce two function spaces on $\Gamma$,  which will be the key ingredients in order to build  weak
solutions of \eqref{eq: MFG system}.
\begin{defn}
\label{def: functional spaces}
We define two Sobolev spaces, $V:=H^1(\Gamma)$,
see Definition \ref{def: Sobolev space}, 
and 
\begin{equation}
W:=\left\{
  \begin{array}[c]{ll}
w:\Gamma\rightarrow\mathbb{R}: & \;w_{\alpha}\in H^{1}\left(0,\ell_{\alpha}\right) 
\text{ for all }\alpha \in \cA, \\ & \dfrac{w|_{\Gamma_{\alpha}}\left(\nu_{i}\right)}{\gamma_{i\alpha}}=\dfrac{w|_{\Gamma_{\beta}}\left(\nu_{i}\right)}{\gamma_{i\beta}}\text{ for all }
i\in I, \; \alpha,\beta\in\mathcal{A}_{i}    
  \end{array}
\right\} \label{eq: space W}
\end{equation}
which is also a Hilbert space,
endowed with the norm $\left\Vert w\right\Vert _{W}=\left(\sum_{\alpha\in\mathcal{A}}\left\Vert w_{\alpha}\right\Vert^2 _{H^{1}\left(0,\ell_{\alpha}\right)}\right)^{\frac 1 2}$. 
\end{defn}


\begin{defn}
\label{def: test function}
Let the functions $\psi\in W$ and $\phi\in PC(\Gamma)$ be defined as follows:
\begin{equation}
\begin{cases}
\psi_{\alpha}\text{ is affine on }\left(0,\ell_{\alpha}\right),\\
\psi|_{\Gamma_{\alpha}}\left(\nu_{i}\right)=\gamma_{i\alpha},\text{ if }\alpha\in\mathcal{A}_{i},\\
\psi\text{ is constant on the edges \ensuremath{\Gamma_{\alpha}} which touch the boundary of \ensuremath{\Gamma}}.
\end{cases}\label{eq: test function 1}
\end{equation}
\begin{equation}
\begin{cases}
\phi_{\alpha}\text{ is affine on }\left(0,\ell_{\alpha}\right),\\
\phi|_{\Gamma_{\alpha}}\left(\nu_{i}\right)=\dfrac{1}{\gamma_{i\alpha}},\text{ if }\alpha\in\mathcal{A}_{i},\\
\phi\text{ is constant on the edges \ensuremath{\Gamma_{\alpha}} which touch the boundary of \ensuremath{\Gamma}}.
\end{cases}\label{eq: test function 2}
\end{equation}
Note that both functions $\psi,\phi$ are positive and bounded. 
We  set $\overline{\psi}=\max_{\Gamma}\psi$, $\underline{\psi}=\min_{\Gamma}\psi$, $\overline{\phi}=\max_{\Gamma}\phi$, $\underline{\phi}=\min_{\Gamma}\phi$.
\end{defn}

\begin{rem}
\label{rem: test function}One can see that $v\in V\longmapsto v\psi$
is an isomorphism from $V$ onto $W$ and $w\in W\longmapsto w\phi$
is the inverse isomorphism.
\end{rem}

\begin{defn}\label{sec:funct-spac-relat-1}
Let the function space $\mathcal{W}\subset W$  be defined as follows:
\begin{equation}
\mathcal{W}:=\left\{
  \begin{array}[c]{ll}
m:\Gamma\to \R: & m_{\alpha}\in C^{1}\left(\left[0,\ell_{\alpha}\right]\right)\text{ for all }\alpha \in \mathcal{A}, \\
&\dfrac{m|_{\Gamma_{\alpha}}\left(\nu_{i}\right)}{\gamma_{i\alpha}}=\dfrac{m|_{\Gamma_{\beta}}\left(\nu_{i}\right)}{\gamma_{i\beta}}\text{ for all \ensuremath{i\in I,\alpha,\beta\in\mathcal{A}_{i}}}    
  \end{array}
\right\} .\label{eq: regular space}
\end{equation}
\end{defn}

\subsubsection{Main result}
\label{sec:main-result}
\begin{defn}\label{sec:main-result-1}
A solution of the Mean Field Games system \eqref{eq: MFG system}
is a triple $\left(v,\rho,m\right)\in C^{2}\left(\Gamma\right)\times\mathbb{R}\times\mathcal{W}$
such that $\left(v,\rho\right)$ is a classical solution of
\begin{equation}
  \label{eq:16}
\begin{cases}
-\mu_{\alpha}\partial^{2}v+H\left(x,\partial v\right)+\rho=F\left(m\right), & \text{in }\Gamma_{\alpha}\backslash\mathcal{V},\alpha\in\mathcal{A},\\
{\displaystyle \sum_{\alpha\in\mathcal{A}_{i}}\gamma_{i\alpha}\mu_{\alpha}\partial_{\alpha}v\left(\nu_{i}\right)}=0, & \text{if }\nu_{i}\in\mathcal{V},
\end{cases}
\end{equation}
(note that $v$ is continuous at the vertices from the definition of $C^2 (\Gamma)$),
and $m$ satisfies
\begin{equation}
  \label{eq:17}
\sum_{\alpha\in\mathcal{A}}\int_{\Gamma_{\alpha}}\left[\mu_{\alpha}\partial m\partial u+\partial\left(m\partial_{p}H\left(x,\partial v\right)\right)u\right]dx=0,\quad\text{for all }u\in V,
\end{equation}
where $V$ is given in Definition \ref{def: functional spaces}.
\end{defn}

We are ready to state the main result:
\begin{thm}
\label{thm: MFG system}  If assumptions \eqref{eq: Hamiltonian}-\eqref{eq: derivative of H is bounded} and 
\eqref{eq: coupling F}-\eqref{ineq: F is bounded from below} are satisfied,
 then
there exists a solution $\left(v,m,\rho\right)\in C^{2}\left(\Gamma\right)\times\mathcal{W}\times\mathbb{R}$
of \eqref{eq: MFG system}. If $F$ is locally Lipschitz continuous, then $v \in C^{2,1}(\Gamma)$. Moreover if $F$ is strictly increasing, then the solution is unique. 
\end{thm}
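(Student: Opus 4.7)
The plan is to prove existence by a Schauder fixed point argument combined with a truncation of the coupling $F$, and then to obtain uniqueness by the classical Lasry--Lions monotonicity computation adapted to the Kirchhoff and transmission conditions on the network. The function spaces $V$ and $W$ of Definition~\ref{def: functional spaces}, together with the duality $v\mapsto v\psi$ of Remark~\ref{rem: test function}, are set up precisely so that the boundary terms at vertices which appear in these arguments collapse to the prescribed transmission conditions.

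\textbf{Fixed point for a truncated coupling.} For each $k\in\N$, I replace $F$ by $F_k(r):=F(\min(r,k))$, which is bounded. Given $\tilde m\in L^2(\Gamma)$ with $\tilde m\ge 0$ and $\int_\Gamma \tilde m\,dx=1$, the function $f:=F_k(\tilde m)\in L^\infty(\Gamma)$ and the ergodic HJB theory of Section~\ref{sec: H-J equation and ergodic problem} produces a unique $(v,\rho)\in C^2(\Gamma)\times\R$ solving \eqref{eq:16} with right-hand side $f$ and $\int_\Gamma v\,dx=0$; by \eqref{eq: derivative of H is bounded} the feedback $a^\star:=-\partial_p H(x,\partial v)$ lies in $PC(\Gamma)$. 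The linear Fokker--Planck theory of Section~\ref{sec: linear problems} then yields a unique nonnegative solution $m\in\mathcal W$ with $\int_\Gamma m\,dx=1$ of the FP equation from \eqref{eq: MFG system} with drift $a^\star$ and the vertex relations in \eqref{eq:15}. Set $\Phi_k(\tilde m):=m$ on the convex set $\mathcal K:=\{\tilde m\in L^2(\Gamma):\tilde m\ge 0,\ \int_\Gamma \tilde m\,dx=1\}$. Continuity of $\Phi_k$ in $L^2$ and compactness of its image follow from stability of the HJB problem under $L^\infty$-perturbations of the right-hand side and of the FP problem under $L^p$-perturbations of the drift (both from Sections~\ref{sec: linear problems}--\ref{sec: H-J equation and ergodic problem}), and Schauder's theorem provides a fixed point $(v_k,\rho_k,m_k)$.

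\textbf{A priori estimates and removal of the truncation.} The lower bound \eqref{ineq: F is bounded from below} together with the coercivity \eqref{eq: super quadratic} gives a one-sided bound on $\rho_k$; integrating the HJB equation against $m_k$ (using the weak FP equation \eqref{eq:17} with test function $v_k\in V$, converted to $W$ via the isomorphism of Remark~\ref{rem: test function}) produces, thanks to the convexity \eqref{eq: Hamiltonian is convex} of $H$, a bound for $\int_\Gamma F_k(m_k)m_k\,dx$ and hence, via \eqref{eq: super quadratic}, a $W^{1,q}$-bound on $v_k$. Feeding this back into the $C^{0,\sigma}$-estimates from Section~\ref{sec: linear problems} for the linear FP problem gives a uniform $L^\infty$-bound on $m_k$, so that $F_k(m_k)=F(m_k)$ for $k$ large enough, and passing to a subsequence in both equations and all vertex conditions produces a solution of \eqref{eq: MFG system}. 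If $F$ is locally Lipschitz, then $F(m)$ inherits Hölder regularity from $m$, and a standard edge-by-edge Schauder bootstrap combined with the Kirchhoff condition upgrades $v$ to $C^{2,1}(\Gamma)$.

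\textbf{Uniqueness and the main obstacle.} Assume $F$ is strictly increasing and let $(v_i,\rho_i,m_i)$, $i=1,2$, be two solutions. I subtract the HJB equations, multiply by $m_1-m_2$, integrate over $\Gamma$, and symmetrically use $v_1-v_2\in V$ as a test function in the weak FP equation \eqref{eq:17} for each $m_i$. The $\rho_i$-contributions vanish since $\int_\Gamma(m_1-m_2)\,dx=0$, and combining the identities yields
\[
\int_\Gamma\bigl(F(m_1)-F(m_2)\bigr)(m_1-m_2)\,dx = -\sum_{i=1}^{2}\int_\Gamma m_i\bigl[H(x,\partial v_{3-i})-H(x,\partial v_i)-\partial_p H(x,\partial v_i)(\partial v_{3-i}-\partial v_i)\bigr]dx \le 0,
\]
the inequality coming from convexity \eqref{eq: Hamiltonian is convex}. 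Strict monotonicity of $F$ then forces $m_1=m_2$ a.e., and the uniqueness statement for the ergodic HJB problem from Section~\ref{sec: H-J equation and ergodic problem} gives $\rho_1=\rho_2$ and $v_1=v_2$. The delicate point, and what I expect to be the main obstacle, is the bookkeeping of vertex contributions in the two integrations by parts: each edge produces boundary terms involving $m_i|_{\Gamma_\alpha}(\nu_i)\partial_\alpha v_j(\nu_i)$ and $\partial_\alpha m_i(\nu_i)v_j(\nu_i)$, and one must verify that the Kirchhoff condition for $v_j$, the continuity of $v_j$ at the vertices, the compatibility condition $m_i|_{\Gamma_\alpha}(\nu_i)/\gamma_{i\alpha}=m_i|_{\Gamma_\beta}(\nu_i)/\gamma_{i\beta}$, and the flux condition on $m_i$ in \eqref{eq:15} combine to annihilate them exactly --- which is precisely the role of the pairing between $V$ and $W$.
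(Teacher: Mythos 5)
Your overall architecture coincides with the paper's: truncate $F$ to a bounded coupling, run a Schauder fixed point for the truncated system, derive a priori estimates independent of the truncation level, pass to the limit, and prove uniqueness by the Lasry--Lions monotonicity computation (your uniqueness section is exactly the paper's argument, including the observation that the $V$--$W$ duality is what kills the vertex terms). However, there is a genuine gap in your a priori estimates. Integrating the HJB equation against the invariant density $m_k$ yields only the identity
\begin{equation*}
\rho_{k}=\int_{\Gamma}F_{k}\left(m_{k}\right)m_{k}\,dx+\int_{\Gamma}\left[\partial_{p}H\left(x,\partial v_{k}\right)\partial v_{k}-H\left(x,\partial v_{k}\right)\right]m_{k}\,dx ,
\end{equation*}
and convexity \eqref{eq: Hamiltonian is convex} plus \eqref{ineq: F is bounded from below} turn this into a \emph{lower} bound $\rho_k\ge \int_\Gamma F_k(m_k)m_k\,dx - C\ge -M-C$. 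This does not, by itself, bound $\int_\Gamma F_k(m_k)m_k\,dx$ from above, which is what you need to start the chain leading to the $W^{1,q}$ bound on $v_k$; for that you need an independent \emph{upper} bound on $\rho_k$ that does not degenerate with $k$ (the bound $|\rho|\le\max|H(x,0)-f(x)|$ from Theorem \ref{thm: existence ergodic} is useless here since $f=F_k(m_k)$ is only bounded by a constant depending on $k$). The paper's key device, absent from your proposal, is to test the HJB equation against the strictly positive solution $w\in W$ of \eqref{eq: Fokker-Planck}--\eqref{eq: normalization and positive} with drift $b=0$ (Theorem \ref{thm: existence and uniqueness FK}): the Kirchhoff pairing kills $\sum_\alpha\int_{\Gamma_\alpha}\mu_\alpha\partial v_k\partial w$, giving $\rho_n\le C\int_\Gamma F_n(m_n)\,dx + C - C_0\int_\Gamma|\partial v_n|^q w\,dx$, which, combined with the elementary interpolation $\int_\Gamma F_n(m_n)\,dx\le \frac1K\int_\Gamma F_n(m_n)m_n\,dx + \frac MK + C_K$ and the lower bound above, closes the loop for a suitable choice of $K$. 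Without this (or an equivalent substitute) your estimates do not close.

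Two smaller points. First, your fixed point map is set on a subset of $L^2(\Gamma)$, but the ergodic theorem (Theorem \ref{thm: existence ergodic}) is proved for $f\in PC(\Gamma)$; the paper works instead in the H\"older-type space $\mathcal{M}_\sigma$ precisely so that $F(m)$ is admissible data --- your choice needs the same repair. Second, your route for removing the truncation (uniform $L^\infty$ bound on $m_k$, hence $F_k(m_k)=F(m_k)$ for large $k$) is a legitimate and arguably cleaner alternative to the paper's uniform-integrability/Vitali argument, \emph{provided} the drift $\partial_pH(x,\partial v_k)$ is first shown to be uniformly bounded in $L^\infty$; this requires passing from the $W^{1,q}$ bound on $v_k$ to a $C^1$ bound via an $L^1$ estimate on $\partial^2 v_k$ from the equation, a step you should make explicit since $W^{1,q}\not\hookrightarrow C^1$.
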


\begin{rem}\label{sec:main-result-2}
  The proof of the existence result in  \cite{CM2016} is valid only in the case when the coupling cost $F$ is bounded.
\end{rem}


\begin{rem}
The existence result in Theorem \ref{thm: MFG system}  holds if we assume that the coupling operator $V$ is  non local  and regularizing, i.e., $ V$ is a continuous map 
from $\mathcal{P}$ to a bounded subset of $\mathcal{F}$,
with
$\mathcal{F}:  =\left\{ f:\Gamma\rightarrow\mathbb{R}:\;f|_{\Gamma_{\alpha}}\in C^{0,\sigma}\left(\Gamma_{\alpha}\right)\right\}$.
The proof, omitted in what follows, is similar to that of Lemma \ref{thm: main result with F bounded} below. 
\end{rem}


\section{\label{sec: linear problems}Preliminary: A class of linear boundary
value problems}

This section contains  elementary results on the solvability of some linear boundary value problems on $\Gamma$.  To the best of our knowledge, these results are not available in the literature.

\subsection{A first class of problems}
We recall that the constants $\mu_\alpha$ and $\gamma_{i\alpha}$ are defined in Section \ref{sec:class-stoch-proc}.
Let $\lambda$ be a positive number. We start with very simple  linear
boundary value problems, in which the only difficulty is the Kirchhoff condition:
\begin{equation}
\begin{cases}
-\mu_{\alpha}\partial^{2}v+\lambda v=f, & \text{in }\Gamma_{\alpha}\backslash\mathcal{V},\alpha\in\mathcal{A},\\
v|_{\Gamma_\alpha} (\nu_i) =v|_{\Gamma_\beta} (\nu_i), \quad & \alpha,\beta \in \cA_i,\; i\in I,
\\
{\displaystyle \sum_{\alpha\in\mathcal{A}_{i}}\gamma_{i\alpha}\mu_{\alpha}\partial_{\alpha}v\left(\nu_{i}\right)=0}, &  i\in I,
\end{cases}\label{eq: linear 3}
\end{equation}
where $f\in W'$, $W'$ is the topological dual  of $W$.
\begin{rem}
We have already noticed that, if $\nu_{i}\in\partial\Gamma$, the last condition
in \eqref{eq: linear 3} boils down to a standard Neumann boundary condition
$\partial_{\alpha}v\left(\nu_{i}\right)=0$, in which $\alpha$ is the unique element of $\cA_i$. 
Otherwise, if $\nu_{i}\in\mathcal{V}\backslash\partial\Gamma$, the last condition
in \eqref{eq: linear 3} is the Kirchhoff condition discussed above.
\end{rem}

\begin{defn}
\label{def: B_lambda}A weak solution of \eqref{eq: linear 3} is
a function $v\in V$ such that
\begin{equation}
\mathscr{B}_{\lambda}\left(v,w\right)=\left\langle f,w\right\rangle _{W',W},\quad\text{for all }w\in W,\label{eq: linear 3 weak form}
\end{equation}
where $\mathscr{B}_{\lambda}:V\times W\rightarrow\mathbb{R}$ is the
bilinear form defined as follows:
\[
\mathscr{B}{}_{\lambda}\left(v,w\right)=\sum_{\alpha\in\mathcal{A}}\int_{\Gamma_{\alpha}}\left(\mu_{\alpha}\partial v\partial w+\lambda vw\right)dx.
\]
\end{defn}

\begin{rem}
Formally, \eqref{eq: linear 3 weak form} is obtained by testing the first line of \eqref{eq: linear 3}
by $w\in W$,  integrating  by part the left hand side on each $\Gamma_\alpha$ and summing over $\alpha\in \cA$. 
There is no  contribution from the vertices,  because of  the Kirchhoff conditions on the one hand and  on the other hand the  jump conditions satisfied by the elements of $W$.
\end{rem}
\begin{rem}\label{sec:first-class-problems}
By using the fact that $\Gamma_\alpha$ are line segments, i.e. one dimensional sets and solving the differential equations,
 we see that if $v$ is a weak solution of $\eqref{eq: linear 3}$ with $f\in PC\left(\Gamma\right)$, then $v\in C^{2}\left(\Gamma\right)$. 
\end{rem}

Let us first study the homogeneous case, i.e. $f=0$.
\begin{lem}
\label{lem: linear 1}The function $v=0$ is the unique solution of the following boundary value problem
\begin{equation}
\begin{cases}
-\partial^{2}v+\lambda v=0, & \text{in }\Gamma_{\alpha}\backslash\mathcal{V},\alpha\in\mathcal{A},\\
v|_{\Gamma_\alpha} (\nu_i) =v|_{\Gamma_\beta} (\nu_i), \quad & \alpha,\beta \in \cA_i,\; i\in I,
\\
{\displaystyle \sum_{\alpha\in\mathcal{A}_{i}}}\gamma_{i\alpha}\mu_{\alpha}\partial_{\alpha}v\left(\nu_{i}\right)=0, & i\in I,
\end{cases}\label{eq: linear 1}
\end{equation}
\end{lem}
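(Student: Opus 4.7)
The plan is to prove uniqueness by combining the strong maximum principle on each edge with a one-dimensional Hopf lemma at the vertices. By Remark \ref{sec:first-class-problems}, any weak solution of \eqref{eq: linear 1} belongs to $C^2(\Gamma)$, so one may work with a classical solution $v\in C^2(\Gamma)$. Compactness of $\Gamma$ guarantees that $M:=\max_\Gamma v$ is attained. It suffices to show $M\le 0$: the same argument applied to $-v$, which also solves \eqref{eq: linear 1}, will then yield $\min_\Gamma v\ge 0$, and hence $v\equiv 0$.

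Suppose for contradiction that $M>0$ is attained at some $x_0\in\Gamma$. If $x_0\in\Gamma_\alpha\setminus\mathcal V$ for some $\alpha$, then $\partial^2 v(x_0)\le 0$ at this interior maximum and the equation gives the immediate contradiction $0=-\partial^2 v(x_0)+\lambda v(x_0)\ge\lambda M>0$. Therefore $x_0$ must be a vertex, say $\nu_{i_0}$. For every $\alpha\in\mathcal A_{i_0}$, the restriction $v_\alpha$ solves $-v_\alpha''+\lambda v_\alpha=0$ on $[0,\ell_\alpha]$ and attains its maximum on $[0,\ell_\alpha]$, equal to $M$, at the endpoint corresponding to $\nu_{i_0}$. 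Writing the general solution $v_\alpha(y)=A_\alpha e^{\sqrt{\lambda}\,y}+B_\alpha e^{-\sqrt{\lambda}\,y}$ and imposing the endpoint constraint, an elementary case analysis shows that either $v_\alpha\equiv M$ on $[0,\ell_\alpha]$---which combined with the ODE forces $\lambda M=0$, contradicting $M>0$---or the outward derivative $\partial_\alpha v(\nu_{i_0})$ is strictly positive (the one-dimensional Hopf lemma).

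Consequently $\partial_\alpha v(\nu_{i_0})>0$ for every $\alpha\in\mathcal A_{i_0}$. Since the weights $\gamma_{i_0\alpha}\mu_\alpha$ are all strictly positive, one obtains
\[
\sum_{\alpha\in\mathcal A_{i_0}}\gamma_{i_0\alpha}\mu_\alpha\,\partial_\alpha v(\nu_{i_0})>0,
\]
which contradicts the Kirchhoff condition in \eqref{eq: linear 1}. Hence $M\le 0$, and by the symmetric argument for $-v$, one concludes $v\equiv 0$. The main obstacle is the vertex case, where it is necessary to exclude the degenerate possibility that $v_\alpha$ is constant equal to $M$ on some edge; the hypothesis $\lambda>0$ is crucial here, since it forces every constant solution of the scalar ODE to vanish, and it is precisely this exclusion that allows the positivity of the Kirchhoff weights $\gamma_{i\alpha}\mu_\alpha$ to close the argument.
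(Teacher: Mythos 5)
Your proof is correct, but it takes a genuinely different route from the paper. The paper solves the ODE explicitly on each edge in terms of $\cosh(\sqrt\lambda\, y)$ and $\sinh(\sqrt\lambda\, y)$, expresses each outward derivative $\partial_\alpha v(\nu_i)$ in terms of the two vertex values of that edge, and reduces the Kirchhoff conditions to a square linear system $MU=0$ in the vector $U$ of vertex values; the inequality $\cosh(\sqrt{\lambda}\ell_\alpha)>1$ makes $M$ strictly diagonally dominant, hence invertible, so all vertex values vanish and the two-point boundary value problem on each edge then forces $v\equiv 0$. You instead argue by the maximum principle at interior points and a one-dimensional Hopf lemma at a vertex where a positive maximum would be attained, deriving a strict inequality that contradicts the Kirchhoff condition; this is sound (the key point, which you handle correctly, is that $v'_\alpha=0$ at the maximizing endpoint would force $v_\alpha(y)=M\cosh(\sqrt\lambda\,y)$ to exceed $M$ inside the edge, and the ``$v_\alpha\equiv M$'' branch of your dichotomy is in fact vacuous since a nonzero constant never solves $-v''+\lambda v=0$). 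Your approach is softer and more robust: it is essentially the same maximum-principle-plus-Kirchhoff mechanism the paper itself uses later in Lemma \ref{lem: linear 2} and Lemma \ref{lem: maximum principle}, and it would survive the addition of bounded lower-order terms where explicit solution formulas are unavailable. What the paper's computation buys in exchange is quantitative information (the invertibility of an explicit, diagonally dominant vertex matrix) and the avoidance of any boundary-point lemma.
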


\begin{proof}
Let $\mathcal{I}_{i}:=\left\{ k\in I:\;  k\not= i;\; \nu_{k}\in\Gamma_{\alpha}\hbox{ for some }\alpha\in\mathcal{A}_{i}\right\} $
be the set of indices of the vertices which are connected to $\nu_{i}$.
By Remark \ref{rem:new network}, it is not restrictive to assume (in the remainder of the proof) that  for all $k\in\mathcal{I}_{i}$, $\Gamma_{\alpha}=\Gamma_{\alpha_{ik}}=\left[\nu_{i},\nu_{k}\right]$ is oriented from $\nu_{i}$ to $\nu_{k}$.

For $k\in\mathcal{I}_{i}$, $\Gamma_\alpha=[\nu_i, \nu_k]$, using the parametrization \eqref{eq: parametrization},
the linear differential eqaution \eqref{eq: linear 1} in the edge $\Gamma_{\alpha}$
is
\[
-v_{\alpha}''\left(y\right)+\lambda v_{\alpha}\left(y\right)=0,\quad\text{in }\left(0,\ell_{\alpha}\right),
\]
whose solution is
\begin{equation}
v_{\alpha}\left(y\right)=\zeta_{\alpha}\cosh\left(\sqrt{\lambda}y\right)
+\xi_{\alpha}\sinh\left(\sqrt{\lambda}y\right),\label{eq: explicit formula}
\end{equation}
with
\[
\begin{cases}
\zeta_{\alpha}=v_{\alpha}\left(0\right)=v\left(\nu_{i}\right),\\
\zeta_{\alpha}\cosh\left(\sqrt{\lambda}\ell_{\alpha}\right)+
\xi_{\alpha}\sinh\left(\sqrt{\lambda}\ell_{\alpha}\right)=v_{\alpha}\left(\ell_{\alpha}\right)=v\left(\nu_{k}\right).
\end{cases}
\]
It follows that $\partial_{\alpha}v\left(\nu_{i}\right)= -\sqrt{\lambda}\xi_{\alpha}=-\dfrac{\sqrt{\lambda}}{\sinh\left(\sqrt{\lambda}\ell_{\alpha}\right)}\left[v\left(\nu_{k}\right)-v\left(\nu_{i}\right)\cosh\left(\sqrt{\lambda}\ell_{\alpha}\right)\right]$.
Hence, the transmission condition in \eqref{eq: linear 1} becomes: for all $i\in I$,
\begin{align*}
0  =\sum_{\alpha\in\mathcal{A}_{i}}\gamma_{i\alpha}\mu_{\alpha}\partial_{\alpha}v\left(\nu_{i}\right)
 =\sum_{k\in\mathcal{I}_{i}}\dfrac{\sqrt{\lambda}\gamma_{i\alpha_{ik}}\mu_{\alpha_{ik}}\cosh\left(\sqrt{\lambda}\ell_{\alpha_{ik}}\right)}{\sinh\left(\sqrt{\lambda}\ell_{\alpha_{ik}}\right)}v\left(\nu_{i}\right)-\sum_{k\in\mathcal{I}_{i}}\dfrac{\sqrt{\lambda}\gamma_{i\alpha_{ik}}\mu_{\alpha_{ik}}}{\sinh\left(\sqrt{\lambda}\ell_{\alpha_{ik}}\right)}v\left(\nu_{k}\right).
\end{align*}
 Therefore, we obtain a   system of linear equations of the form $MU=0$ with $M=
\left(M_{ij}\right)_{1\le i,j\le N}$, $N=\sharp(I)$, and $U=\left(v\left(\nu_{1}\right),\ldots,v\left(\nu_{N}\right)\right)^{T}$, where $M$ is defined by
\[
\begin{cases}
M_{ii} & =\ds \sum_{k\in\mathcal{I}_{i}}\gamma_{i\alpha_{ik}}\mu_{\alpha_{ik}}\dfrac{\cosh\left(\sqrt{\lambda}\ell_{\alpha_{ik}}\right)}{\sinh\left(\sqrt{\lambda}\ell_{\alpha_{ik}}\right)}>0,\\
M_{ik} & =\ds \dfrac{-\gamma_{i\alpha_{ik}}\mu_{\alpha_{ik}}}{\sinh\left(\sqrt{\lambda}\ell_{\alpha_{ik}}\right)}\le0,\quad k\in\mathcal{I}_{i},\\
M_{ik} & =0,\quad k\notin\mathcal{I}_{i}.
\end{cases}
\]
For all $i\in I$, since $\cosh\left(\sqrt{\lambda}\ell_{\alpha_{ik}}\right)>1$
for all $k\in\mathcal{I}_{i}$, the sum of the entries on each row
is positive and $M$ is diagonal dominant. Thus, $M$ is invertible and  $U=0$ is the unique solution of the system. Finally, by solving the ODE in each edge
$\Gamma_{\beta}$ with $v_{\beta}\left(0\right)=v_{\beta}\left(\ell_{\beta}\right)=0$,
we get that $v=0$ on $\Gamma$.
\end{proof}
Let us now study the non-homogeneous problems \eqref{eq: linear 3}.
\begin{lem}
\label{lem: linear 3} For any $f$ in $W'$, \eqref{eq: linear 3}
has a unique weak solution $v$ in $V$, see Definition \ref{def: functional spaces}.
Moreover, there exists a constant $C$ such that $\left\Vert v\right\Vert _{V}\le C\left\Vert f\right\Vert _{W'}.$
\end{lem}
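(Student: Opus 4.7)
The plan is to apply the Banach-Necas-Babuska theorem to the bilinear form $\mathscr{B}_\lambda$ on $V\times W$. Continuity is immediate from Cauchy-Schwarz and the definitions of $V,W$. I will first handle large $\lambda$ by a direct inf-sup argument, and then deduce the general case by a Fredholm alternative together with uniqueness via Lemma \ref{lem: linear 1}.

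For the inf-sup, given $v\in V$ the natural test element in $W$ is $w=v\psi$, using the isomorphism of Remark \ref{rem: test function}. Since each $\psi_\alpha$ is affine and $\psi$ is positive and bounded, the product rule gives $\partial(v\psi)=\psi\,\partial v+v\,\partial\psi$, and applying Young's inequality on the cross term $\mu_\alpha v\,\partial v\,\partial\psi$ yields an estimate of the form
\[
\mathscr{B}_\lambda(v,v\psi)\;\ge\;\sum_{\alpha\in\cA}\int_{\Gamma_\alpha}\frac{\mu_\alpha \underline{\psi}}{2}(\partial v)^2\,dx\;+\;\int_\Gamma\bigl(\lambda\,\underline{\psi}-C_\psi\bigr)\,v^2\,dx,
\]
where $C_\psi$ depends only on $(\mu_\alpha)$, $(\gamma_{i\alpha})$ and $(\ell_\alpha)$. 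Taking $\lambda\ge\Lambda$ large enough, the right-hand side dominates $\|v\|_V^2$; combined with $\|v\psi\|_W\le C\|v\|_V$, this produces the inf-sup condition. The non-degeneracy condition of BNB follows symmetrically by testing $w\in W$ against $v=w\phi\in V$ through the inverse isomorphism in Remark \ref{rem: test function}. Hence for $\lambda\ge\Lambda$ the problem \eqref{eq: linear 3} admits a unique weak solution satisfying $\|v\|_V\le C\|f\|_{W'}$.

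For arbitrary $\lambda>0$, I would write the operator $A_\lambda:V\to W'$, $\langle A_\lambda v,w\rangle_{W',W}=\mathscr{B}_\lambda(v,w)$, as $A_\lambda=A_{\lambda+\Lambda}-\Lambda\,J$ with $\langle Jv,w\rangle=\int_\Gamma vw\,dx$. The edgewise Rellich embedding $H^1(0,\ell_\alpha)\hookrightarrow\hookrightarrow L^2(0,\ell_\alpha)$ shows that $J:V\to W'$ factors through $L^2(\Gamma)$ and is compact. Thus $A_\lambda$ is a compact perturbation of the isomorphism $A_{\lambda+\Lambda}$ and is Fredholm of index $0$; by the Fredholm alternative it is an isomorphism as soon as $\ker A_\lambda=\{0\}$.

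To establish triviality of the kernel, any $v\in\ker A_\lambda$ is a weak solution of the homogeneous version of \eqref{eq: linear 3}; by Remark \ref{sec:first-class-problems} it belongs to $C^2(\Gamma)$ and classically solves $-\mu_\alpha\partial^2 v+\lambda v=0$ on each $\Gamma_\alpha$, together with the continuity and Kirchhoff conditions at the vertices. Solving the ODE on each edge as in the proof of Lemma \ref{lem: linear 1}, with hyperbolic functions $\cosh(\sqrt{\lambda/\mu_\alpha}\,y)$, $\sinh(\sqrt{\lambda/\mu_\alpha}\,y)$, and injecting into the Kirchhoff conditions again produces a strictly diagonally dominant linear system $MU=0$ for the vector of vertex values, forcing $v\equiv 0$. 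The norm bound $\|v\|_V\le C\|f\|_{W'}$ then follows from the open mapping theorem.

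The main technical obstacle I anticipate is verifying the inf-sup estimate cleanly: the asymmetric pair $(V,W)$ and the jump weights $\gamma_{i\alpha}$ were precisely designed so that the vertex boundary terms cancel when testing with $v\psi$, and one must check that the constant $C_\psi$ arising from $\partial\psi$ depends only on the geometric/diffusion data so that $\Lambda$ can be chosen independently of $\lambda$.
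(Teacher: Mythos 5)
Your proposal is correct and follows essentially the same route as the paper: an inf-sup/Banach-Necas-Babu\v ska argument for the shifted form $\mathscr{B}_{\lambda+\Lambda}$ using the test pairs $v\mapsto v\psi$ and $w\mapsto w\phi$, then a compact-perturbation/Fredholm-alternative step, with triviality of the kernel obtained from the explicit hyperbolic ODE solutions and the diagonally dominant vertex system (the paper packages this last step as Lemma \ref{lem: linear 1}, which you correctly adapt to include the factors $\sqrt{\lambda/\mu_\alpha}$). The only cosmetic difference is that you phrase the Fredholm step at the level of the operator $A_\lambda=A_{\lambda+\Lambda}-\Lambda J$ on $V\to W'$, whereas the paper works with the resolvent $\mathcal{I}\circ\overline{R}_{\lambda_0}$ on $W'$; these are equivalent.
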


\begin{proof}
First of all, we claim that for
$\lambda_{0}>0$ large enough and any $f\in W'$, the problem
\begin{equation}
\mathscr{B}_{\lambda}\left(v,w\right)+\lambda_{0}\left(v,w\right)=\left\langle f,w\right\rangle_{W',W}\label{eq: lambda0}
\end{equation}
has a unique solution $v\in V$. Let us prove the claim. Let $v\in V$,
then $\hat{w}:=v\psi$ belongs to $W$, where $\psi$ is given by
Definition \ref{def: test function}. Let us set $\overline{\partial\psi}:=\max_{\Gamma}\left|\partial\psi\right|$ and 
$\underline{\psi}:=\min_{\Gamma}\psi >0$, 
($\partial\psi$ is bounded, see Definition \ref{def: test function}); we get 
\begin{align}
\mathscr{B}_{\lambda}\left(v,\hat{w}\right)+\lambda_{0}\left(v,\hat{w}\right) 
 & =\sum_{\alpha\in\mathcal{A}}\int_{\Gamma_{\alpha}}\left[\mu_{\alpha}\left|\partial v\right|^{2}\psi+\mu_{\alpha}\left(v\partial v\right)\partial\psi+\left(\lambda+\lambda_{0}\right)v^{2}\psi\right]dx\nonumber \\
 & \ge\sum_{\alpha\in\mathcal{A}}\int_{\Gamma_{\alpha}}\left[\dfrac{\mu_{\alpha}\underline{\psi}}{2}\left|\partial v\right|^{2}+\left(\lambda_{0}\underline{\psi}-\dfrac{\mu_{\alpha}\overline{\partial\psi}^{2}}{2\underline{\psi}}\right)v^{2}\right]dx.\label{ineq: norm V-1}
\end{align}
 When $\lambda_{0}\ge\dfrac{\mu_{\alpha}}{2}+\dfrac{\mu_{\alpha}\overline{\partial\psi}^{2}}{2\underline{\psi}^{2}}$
for all $\alpha\in\mathcal{A}$, we obtain that 
$
\mathscr{B}_{\lambda}\left(v,\hat{w}\right)+\lambda_{0}\left(v,\hat{w}\right)\ge\dfrac{\underline{\mu}~\underline{\psi}}{2}\left\Vert v\right\Vert _{V}^{2}\ge\dfrac{\underline{\mu}~\underline{\psi}}{2C_{\psi}}\left\Vert v\right\Vert _{V}\left\Vert \hat{w}\right\Vert _{W}$,
using the fact that, from Remark \ref{rem: test function}, there exists a positive
constant $C_{\psi}$ such that $\left\Vert v\psi\right\Vert _{W}\le C_{\psi}\left\Vert v\right\Vert _{V}$
for all $v\in V$. This yields
\begin{equation}
\inf_{v\in V}\sup_{w\in W}\dfrac{\mathscr{B}_{\lambda}\left(v,w\right)+\lambda_{0}\left(v,w\right)}{\left\Vert v\right\Vert _{V}\left\Vert w\right\Vert _{W}}\ge\dfrac{\underline{\mu}~\underline{\psi}}{2C_{\psi}}.\label{ineq: norm W}
\end{equation}
Using a similar argument for any $w\in W$ and $\hat{v}=w\phi$, where $\phi$ is given
 in Definition \ref{def: test function}, we obtain that  for $\lambda_{0}$ large enough,
 there exist a positive constant $C_{\phi}$ such that
\begin{equation}
\inf_{w\in W}\sup_{v\in V}\dfrac{\mathscr{B}_{\lambda}\left(v,w\right)+\lambda_{0}\left(v,w\right)}{\left\Vert w\right\Vert _{W}\left\Vert v\right\Vert _{V}}\ge \dfrac{\underline{\mu}~\underline{\phi}}{2C_{\phi}}.\label{ineq: norm V}
\end{equation}
From \eqref{ineq: norm W} and \eqref{ineq: norm V}, by the Banach-Necas-Babu\v ska lemma (see \cite{EG2004} or \cite{BF1991}),
for $\lambda_{0}$ large enough,  for any $f\in W'$, there exists a unique solution
$v\in V$ of \eqref{eq: lambda0} and $\left\Vert v\right\Vert _{V}\le C\left\Vert f\right\Vert _{W'}$
for a positive constant $C$.
Hence, our claim is proved.

Now, we fix $\lambda_{0}$ large enough and we define the continuous
linear operator $\overline{R}_{\lambda_{0}}:W'\rightarrow V$ where
$\overline{R}_{\lambda_{0}}\left(f\right)=v$ is the unique solution
of \eqref{eq: lambda0}. Since the injection $\mathcal{I}$ from $V$
to $W'$ is compact, then $\mathcal{I}\circ\overline{R}_{\lambda_{0}}$
is a compact operator from $W'$ into $W'$. By the Fredholm alternative
(see \cite{GT2001}), one of the following assertions holds:
\begin{align}
 & \text{There exists }\overline{v}\in W'\backslash\left\{ 0\right\} \text{ such that }\left(Id-\lambda_{0}\left(\mathcal{I}\circ\overline{R}_{\lambda_{0}}\right)\right)\overline{v}=0.\label{eq: Fredholm 1}\\
 & \text{For any }g\in W',\text{ there exists a unique }\overline{v}\in W'\text{ such that }\left(Id-\lambda_{0}\left(\mathcal{I}\circ\overline{R}_{\lambda_{0}}\right)\right)\overline{v}=g.\label{eq: Fredholm 2}
\end{align}
We claim that \eqref{eq: Fredholm 2} holds. Indeed, assume by contradiction
that \eqref{eq: Fredholm 1} holds. Then there exists $\overline{v}\ne0$
such that $\overline{v}\in V$ and $\mathcal{I}\circ\overline{R}_{\lambda_{0}}\overline{v}=\dfrac{\overline{v}}{\lambda_{0}}$.
Therefore, $ \overline{v}\in V$, and $\mathscr{B}_{\lambda}\left(\dfrac{\overline{v}}{\lambda_{0}},w\right)+\lambda_{0}\left(\dfrac{\overline{v}}{\lambda_{0}},w\right)=\left(\overline{v},w\right)$, for all $w\in W$.
This yields that $\mathscr{B}_{\lambda}\left(\overline{v},w\right)=0$
for all $w\in W$ and by Lemma \ref{lem: linear 1}, we get that $\overline{v}=0$, which leads us to a contradiction. Hence, our claim is proved.

It is then classical to see that  \eqref{eq: Fredholm 2} implies  that  there exists a positive constant $C$ such that 
  for all $f\in W'$,  (\ref{eq: linear 3}) has a unique weak solution $v$  and that
 $\left\Vert v\right\Vert _{V}\le C\left\Vert f\right\Vert _{W'}$, see \cite{khangdao_PhD} for the details.
\end{proof}

\subsection{The Kolmogorov equation}
Consider $b\in PC\left(\Gamma\right)$. This paragraph is devoted to the following boundary value problem including a Kolmogorov equation
\begin{equation}
\begin{cases}
-\mu_{\alpha}\partial^{2}v+b\partial v=0, & \text{in }\Gamma_{\alpha}\backslash\mathcal{V},\alpha\in\mathcal{A},\\
v|_{\Gamma_\alpha} (\nu_i) =v|_{\Gamma_\beta} (\nu_i), \quad &\alpha,\beta \in \cA_i, i\in I
,\\
{\displaystyle \sum_{\alpha\in\mathcal{A}_{i}}\gamma_{i\alpha}\mu_{\alpha}\partial_{\alpha}v\left(\nu_{i}\right)=0,} & i\in I.
\end{cases}\label{eq: linear 2}
\end{equation}
\begin{defn}
\label{def: A star}A weak solution of \eqref{eq: linear 2} is a function
$v\in V$ such that
\[
\mathscr{A}^{\star}\left(v,w\right)=0,\quad\text{for all }w\in W,
\]
where $\mathscr{A}^{\star}:V\times W\rightarrow\mathbb{R}$ is the bilinear
form defined by
\[
\mathscr{A^{\star}}\left(v,w\right):=\sum_{\alpha\in\mathcal{A}}\int_{\Gamma_{\alpha}}\left(\mu_{\alpha}\partial v\partial w+b\partial vw\right)dx.
\]
\end{defn}

As in Remark~\ref{sec:first-class-problems},  if $v$ is a weak solution of $\eqref{eq: linear 2}$, then $v\in C^{2}\left(\Gamma\right)$.

The uniqueness of solutions of (\ref{eq: linear 2}) up to the addition of constants is obtained by using a maximum principle: 
\begin{lem}
\label{lem: linear 2}
For $b\in PC\left(\Gamma\right)$, the solutions of \eqref{eq: linear 2} are the constant functions on $\Gamma$.
\end{lem}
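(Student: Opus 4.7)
The plan is to prove uniqueness (up to additive constants) by a connectedness argument based on the strong maximum principle and the Hopf boundary-point lemma on each edge, linked across transition vertices by the Kirchhoff condition.

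That constants solve~\eqref{eq: linear 2} is immediate, since the edge equations have no zero-order term and the Kirchhoff relation is trivial when $\partial v \equiv 0$.

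For the converse, I would let $v$ be a weak solution; by the regularity remark just preceding the lemma, $v\in C^2(\Gamma)$, so it attains a maximum $M := \max_\Gamma v$ by compactness. I would then consider the coincidence set $\mathcal{M} := \{x \in \Gamma : v(x)=M\}$, which is non-empty and closed; by connectedness of $\Gamma$, it suffices to show that $\mathcal{M}$ is also open. Pick $x_0 \in \mathcal{M}$. If $x_0 \in \Gamma_\alpha \setminus \cV$, then on $[0,\ell_\alpha]$ the operator $L_\alpha u := -\mu_\alpha u'' + b_\alpha u'$ has continuous coefficients (since $b\in PC(\Gamma)$) and no zero-order term, so the classical strong maximum principle forces $v \equiv M$ on all of $\Gamma_\alpha$, and a neighborhood of $x_0$ is contained in $\mathcal{M}$. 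If instead $x_0 = \nu_i$, then for every $\alpha\in\cA_i$ the outward derivative satisfies $\partial_\alpha v(\nu_i)\ge 0$ (directly from the definition~\eqref{eq: inward derivative}, because $v$ has a global max at $\nu_i$). Positivity of the weights $\gamma_{i\alpha}\mu_\alpha$ and the Kirchhoff relation then turn a sum of non-negative quantities into zero, so each $\partial_\alpha v(\nu_i)=0$. Hopf's boundary-point lemma, applied on each $\Gamma_\alpha$ adjacent to $\nu_i$ (valid precisely because $L_\alpha$ has no zero-order term), then rules out $v$ being non-constant on $\Gamma_\alpha$, so $v\equiv M$ on every edge incident to $\nu_i$. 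Hence $\mathcal{M}$ again contains a neighborhood of $x_0$, which proves openness.

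The only delicate step is the vertex case: it is the interplay between positivity of all Kirchhoff weights and the non-negativity of each $\partial_\alpha v(\nu_i)$ at a maximum that allows Hopf's lemma to be invoked on every incident edge simultaneously, and hence propagates the maximum through the vertex. This is the mechanism that would fail if the coefficients $\gamma_{i\alpha}\mu_\alpha$ had mixed signs, and it is really the only feature of the network structure that enters the proof beyond the standard one-dimensional elliptic theory.
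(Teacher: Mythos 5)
Your proof is correct and follows essentially the same route as the paper: maximum attained at a vertex, non-negativity of the outward derivatives there, the Kirchhoff condition with positive weights forcing them all to vanish, and then propagation of constancy through the connected network. The only cosmetic difference is that where you invoke the strong maximum principle and Hopf's boundary-point lemma on each edge, the paper integrates the first-order linear ODE satisfied by $\partial v_{\beta}$ with zero initial datum (which is exactly the one-dimensional content of those lemmas), and where you run an open--closed argument on the coincidence set, the paper propagates edge by edge from the vertices connected to $\nu_i$.
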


\begin{proof}
[Proof of Lemma \ref{lem: linear 2}]First of all, any constant function on $\Gamma$ is a solution of \eqref{eq: linear 2}.
Now let $v$ be a solution of \eqref{eq: linear 2} then $v\in C^{2}\left(\Gamma\right)$. Assume that the maximum of $v$ over $\Gamma$
is achieved in $\Gamma_{\alpha}$; by the maximum principle,
it is achieved at some endpoint  $\nu_{i}$ of $\Gamma_{\alpha}$.
Without loss of generality, using Remark~\ref{rem:new network}, we can assume that $\pi_{\beta}\left(\nu_{i}\right)=0$
for all $\beta\in\mathcal{A}_{i}$. We have $\partial_{\beta}v\left(\nu_{i}\right)\ge 0$
for all $\beta\in\mathcal{A}_{i}$ because $\nu_{i}$ is the maximum
point of $v$. 
 Since all the coefficients $\gamma_{i\beta},\mu_{\beta}$ are positive,
by the Kirchhoff condition if $\nu_{i}$ is a transition vertex, or by the Neumann boundary condition
 if $\nu_{i}$ is a boundary vertex, we infer that
$\partial_{\beta}v\left(\nu_{i}\right)=0$ for all $\beta\in\mathcal{A}_{i}$.
This implies that $\partial{v_{\beta}}$ is a solution of the first order linear homogeneous differential equation 
$u'+b_{\beta}u=0$, on $\left[0,\ell_{\beta}\right]$, with $u\left(0\right)=0$.
Therefore, $\partial v_{\beta}\equiv 0$ and $v$ is constant on $\Gamma_{\beta}$ for all $\beta \in\mathcal{A}_{i}$.
We can propagate this argument, starting from the vertices connected to $\nu_i$.
Since the network $\Gamma$ is connected and $v$ is continuous, we obtain that $v$ is constant on $\Gamma$.\end{proof}

\subsection{The dual Fokker-Planck equation}
This paragraph is devoted to the dual boundary value problem of (\ref{eq: linear 2}); it involves a Fokker-Planck equation:
\begin{equation}
\begin{cases}
-\mu_{\alpha}\partial^{2}m-\partial\left(bm\right)=0, & \text{in }\Gamma_{\alpha}\backslash\mathcal{V},\alpha\in\mathcal{A},\\
\dfrac{m|_{\Gamma_{\alpha}}\left(\nu_{i}\right)}{\gamma_{i\alpha}}=\dfrac{m|_{\Gamma_{\beta}}\left(\nu_{i}\right)}{\gamma_{i\beta}}, & \alpha,\beta\in\mathcal{A}_{i},\; i\in I,\\
{\displaystyle \sum_{\alpha\in\mathcal{A}_{i}}\left[   n_{i\alpha} b|_{\Gamma _\alpha}\left(\nu_{i}\right)m|_{\Gamma_{\alpha}}\left(\nu_{i}\right)+\mu_{\alpha}\partial_{\alpha}m\left(\nu_{i}\right)\right]=0}, & 
i\in I,
\end{cases}\label{eq: Fokker-Planck}
\end{equation}
where $b\in PC\left(\Gamma\right)$, 
with
\begin{equation}
m\ge0,\quad\int_{\Gamma}mdx=1.\label{eq: normalization and positive}
\end{equation}
First of all, let $\lambda_0$ be a nonnegative constant; for all $h\in V'$, we introduce the modified boundary value problem
\begin{equation}
\begin{cases}
\lambda_0 m -\mu_{\alpha}\partial^{2}m-\partial\left(bm\right)=h, & \text{in }\Gamma_{\alpha}\backslash\mathcal{V},\alpha\in\mathcal{A},\\
\dfrac{m|_{\Gamma_{\alpha}}\left(\nu_{i}\right)}{\gamma_{i\alpha}}=\dfrac{m|_{\Gamma_{\beta}}\left(\nu_{i}\right)}{\gamma_{i\beta}}, & \alpha,\beta\in\mathcal{A}_{i},\;i\in I,\\
{\displaystyle \sum_{\alpha\in\mathcal{A}_{i}}\left[ n_{i\alpha} b\left(\nu_{i}\right)m|_{\Gamma_{\alpha}}\left(\nu_{i}\right)+\mu_{\alpha}\partial_{\alpha}m\left(\nu_{i}\right)\right]=0}, & i\in I.
\end{cases}\label{eq: Fokker-Planck with lambda_0}
\end{equation}

\begin{defn}
\label{def: bilinear form for FK}
For $\lambda\in \R$, consider the bilinear form $\mathscr{A}_{\lambda}:W\times V\rightarrow\mathbb{R}$ defined by
\[
\mathscr{A}_{\lambda}\left(m,v\right)=\sum_{\alpha\in\mathcal{A}}\int_{\Gamma_{\alpha}}\left[\lambda mv+\left(\mu_{\alpha}\partial m+bm\right)\partial v\right]dx.
\]
A weak solution of \eqref{eq: Fokker-Planck with lambda_0} is a function $m\in W$ such that
\[
\mathscr{A}_{\lambda_{0}}(m,v)=\langle h,v\rangle _{V',V},\quad \text{for all }v\in V.
\]
A weak solution of \eqref{eq: Fokker-Planck} is a function $m\in W$ such
that
\begin{equation}
\mathscr{A}_0(m,v):=\sum_{\alpha\in\mathcal{A}}\int_{\Gamma_{\alpha}}\left(\mu_{\alpha}\partial m+bm\right)\partial vdx=0,\quad\text{for all }v\in V.\label{eq: Fokker-Planck weak}
\end{equation}
\end{defn}

\begin{rem}\label{sec:dual-fokker-planck}
Formally, to get \eqref{eq: Fokker-Planck weak}, we multiply the first line of \eqref{eq: Fokker-Planck}
by $v\in V$, integrate by part, sum over $\alpha\in\mathcal{A}$ 
and use the third line of \eqref{eq: Fokker-Planck} to see that there is no contribution from the vertices.
\end{rem}

\begin{thm}
\label{thm: existence and uniqueness FK}For any $b\in PC\left(\Gamma\right)$,
\begin{itemize}
\item  (Existence) There exists a solution $\widehat{m} \in W$ of \eqref{eq: Fokker-Planck}-\eqref{eq: normalization and positive}
satisfying
\begin{equation}
\left\Vert \widehat{m}\right\Vert _{W}\le C,\quad0\le\widehat{m}\le C,\label{eq: estimate FK}
\end{equation}
where the constant $C$ depends only on $\left\Vert b\right\Vert _{\infty}$
and $\left\{ \mu_{\alpha}\right\} _{\alpha\in A}$. Moreover, $\widehat{m}_{\alpha}\in C^{1}\left(0,\ell_{\alpha}\right)$
for all $\alpha\in\mathcal{A}$. Hence, $\widehat{m}\in\mathcal{W}$.
\item  (Uniqueness) $\widehat{m}$ is the unique solution of \eqref{eq: Fokker-Planck}-\eqref{eq: normalization and positive}.
\item (Strictly positive solution) $\widehat{m}$ is strictly positive.
\end{itemize}
\end{thm}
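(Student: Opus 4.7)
The plan is to mimic the architecture of Lemma \ref{lem: linear 3}, working now with the pair of spaces $W\times V$ (rather than $V\times W$) and identifying the cokernel of the resulting Fredholm operator with the one-dimensional kernel provided by Lemma \ref{lem: linear 2}. Concretely, I first fix a large $\lambda_{0}>0$ and study the regularized equation $\mathscr{A}_{\lambda_{0}}(m,v)=\langle h,v\rangle_{V',V}$ for $h\in V'$. Given $m\in W$, I test against $v:=m\phi\in V$ (admissible by Remark \ref{rem: test function}); integrating the drift term by parts and using Young's inequality to absorb the cross-products, together with the fact that $\phi$ is piecewise affine and bounded below by $\underline{\phi}>0$, I expect to obtain
\[
\mathscr{A}_{\lambda_{0}}(m,m\phi)\ge\frac{\underline{\mu}\,\underline{\phi}}{2}\,\|m\|_{W}^{2}
\]
for $\lambda_{0}$ depending only on $\|b\|_{\infty}$, $\{\mu_{\alpha}\}$, $\underline{\phi}$ and $\overline{\partial\phi}$. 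A symmetric computation testing $v\in V$ against $m:=v\psi\in W$ yields the dual inf-sup, so by Banach-Necas-Babu\v ska, for every $h\in V'$ there is a unique $m=R_{\lambda_{0}}(h)\in W$ with $\|m\|_{W}\le C\|h\|_{V'}$.

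Since $W$ embeds compactly into $V'$, the operator $\mathcal{I}\circ R_{\lambda_{0}}$ is compact on $V'$, and the homogeneous equation $\mathscr{A}_{0}(m,v)=0$ is equivalent to $(I-\lambda_{0}\mathcal{I}R_{\lambda_{0}})m=0$. Fredholm's alternative then equates the dimension of this kernel with that of the adjoint kernel which, unwinding Definitions \ref{def: A star} and \ref{def: bilinear form for FK}, is exactly the space of weak solutions of the dual Kolmogorov problem \eqref{eq: linear 2}; by Lemma \ref{lem: linear 2} this is the one-dimensional space of constants. Hence $\ker(\mathrm{FP})$ is one-dimensional, and integrating the edge-wise equation once shows that any $m\in\ker(\mathrm{FP})$ satisfies $\mu_{\alpha}\partial m+bm\equiv d_{\alpha}$ for some constant $d_{\alpha}$, so automatically $m_{\alpha}\in C^{1}[0,\ell_{\alpha}]$ and $m\in\mathcal{W}$.

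The main obstacle is the nonnegativity of a nontrivial element $\widehat{m}\in\ker(\mathrm{FP})$. The key stability property is that the shared-ratio compatibility condition defining $W$ is preserved under taking positive and negative parts: if $m|_{\Gamma_{\alpha}}(\nu_{i})=\gamma_{i\alpha}c_{i}$, then $m^{\pm}|_{\Gamma_{\alpha}}(\nu_{i})=\gamma_{i\alpha}c_{i}^{\pm}$, so $m^{\pm}\in W$ and consequently $m^{-}\phi\in V$. Plugging $v=\widehat{m}^{-}\phi$ into $\mathscr{A}_{0}(\widehat{m},v)=0$, splitting the integrals over $\{\widehat{m}\ge0\}$ and $\{\widehat{m}<0\}$, and combining the resulting edge-wise identity with the Hopf lemma on each edge and with the Fokker-Planck transmission condition at the vertices (together with the shared-ratio relation), I expect to propagate the vanishing of $\widehat{m}^{-}$ across the connected network and thereby conclude $\widehat{m}^{-}\equiv0$. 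Once nonnegativity is secured, $\int_{\Gamma}\widehat{m}>0$ by nontriviality, so the normalization $\int_{\Gamma}\widehat{m}=1$ singles out a unique representative of the one-dimensional kernel, yielding both existence and uniqueness in \eqref{eq: normalization and positive}. Strict positivity then follows from the strong maximum principle on each edge combined with the jump relation at the vertices, and the bound $\widehat{m}\le C$ is deduced from the edge-wise representation $\mu_{\alpha}\partial\widehat{m}+b\widehat{m}=d_{\alpha}$, together with the $W$-bound coming from the Fredholm inverse and the one-dimensional Sobolev embedding $H^{1}(0,\ell_{\alpha})\hookrightarrow L^{\infty}(0,\ell_{\alpha})$.
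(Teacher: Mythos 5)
Your overall architecture (BNB for the regularized bilinear form $\mathscr{A}_{\lambda_0}$, compactness, Fredholm alternative, identification of the adjoint kernel with the constants via Lemma \ref{lem: linear 2}, hence $\dim\ker=1$, then normalization) reproduces faithfully what the paper does for the \emph{uniqueness} part, and the regularity and strict-positivity endgame also matches the paper. The difference is in how existence of a \emph{nonnegative} solution is obtained, and that is where your argument has a genuine gap. You correctly identify the crux -- showing that a nontrivial element of the one-dimensional kernel has a sign -- but your proposed resolution does not work as stated. Testing $\mathscr{A}_{0}(\widehat m,\widehat m^{-}\phi)=0$ yields
\begin{equation*}
\sum_{\alpha}\int_{\Gamma_\alpha}\mu_\alpha\phi\,(\partial \widehat m^{-})^{2}\,dx
=-\sum_{\alpha}\int_{\Gamma_\alpha}\Bigl[\bigl(\mu_\alpha\partial\phi+b\phi\bigr)\widehat m^{-}\partial \widehat m^{-}+b\,\partial\phi\,(\widehat m^{-})^{2}\Bigr]dx ,
\end{equation*}
and with $\lambda_0=0$ there is no coercive zeroth-order term $\lambda_0\phi\,(\widehat m^{-})^{2}$ to absorb the right-hand side by Young's inequality; nor is a Poincar\'e inequality available on $W$. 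This is precisely why the paper runs the sign argument only for the \emph{regularized} problem (its Step 2, where $\lambda_0$ large and the nonnegativity of the datum $\lambda_0\overline m$ make the argument close), and obtains a nonnegative solution of the unregularized problem as a Schauder fixed point of the map $T$ on the convex set $K=\{m\ge 0,\ \int_\Gamma m=1\}$. Your fallback, ``Hopf lemma plus transmission conditions to propagate the vanishing of $\widehat m^{-}$'', also does not close the gap: the paper's propagation argument starts from a function already known to be nonnegative, so that a zero is a minimum, forcing $\partial_\alpha\widehat m=0$ there and hence $d_\alpha=0$ in the first integral $\mu_\alpha\partial\widehat m+b\widehat m=d_\alpha$. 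For a sign-changing candidate, an interior zero need not be a critical point (indeed $d_\alpha\neq 0$ allows $\widehat m$ to cross zero transversally), so nothing propagates and no contradiction arises.

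To repair the proof you should either (i) insert the paper's fixed-point step -- define $T(\overline m)=m$ as the solution of the $\lambda_0$-regularized problem with datum $\lambda_0\overline m$, prove $T(K)\subset K$ using the test function $m^{-}\phi$ \emph{with the large $\lambda_0$ present}, derive the uniform $W$-bound on $T(K)$, and apply Schauder -- and only then use your Fredholm computation for uniqueness; or (ii) supply a genuine Krein--Rutman-type argument showing that the spectral radius of the compact positivity-preserving resolvent admits a nonnegative eigenfunction. As written, your existence claim rests on an unproved sign property of the kernel. The remaining items (the $C^1$ regularity from $\mu_\alpha\partial\widehat m+b\widehat m\equiv d_\alpha$, the $L^\infty$ and $W$ bounds once nonnegativity and normalization are in hand, and strict positivity) are consistent with the paper.
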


\begin{proof}
[Proof of existence in Theorem \ref{thm: existence and uniqueness FK}]We
divide the proof of existence into three steps:

\emph{Step 1:} Let $\lambda_{0}$ be a large positive constant that
will be chosen later.
We claim that for $\overline{m}\in  L^2(\Gamma)$ and $h:=\lambda_0 \overline{m} \in L^2(\Gamma)\subset V'$, \eqref{eq: Fokker-Planck with lambda_0} has a unique solution $m\in W$.
This allows us to define a linear operator as follows: 
\begin{displaymath}
  T:L^{2}\left(\Gamma\right)  \longrightarrow W,\quad\quad   T(\overline m)=m,
\end{displaymath}
where $m$ is the solution of \eqref{eq: Fokker-Planck with lambda_0} with $h=\lambda_0 \overline{m}$. We are
going to prove that $T$ is well-defined and continuous, i.e, for
all $\overline{m}\in L^{2}\left(\Gamma\right)$, \eqref{eq: Fokker-Planck with lambda_0} has a unique
solution that depends continuously on $\overline{m}$. For $w\in W$,
set $\widehat{v}:=w\phi\in V$ where $\phi$ is given by Definition \ref{def: test function}.
We have
\begin{align*}
\mathscr{A}_{\lambda_{0}}\left(w,\widehat{v}\right) & =\sum_{\alpha\in\mathcal{A}}\int_{\Gamma_{\alpha}}\left[\lambda_{0}\phi w^{2}+\left(\mu_{\alpha}\partial w+bw\right)\partial\left(w\phi\right)\right]dx\\
 & =\sum_{\alpha\in\mathcal{A}}\int_{\Gamma_{\alpha}}\left[\left(\lambda_{0}\phi+b\partial\phi\right)w^{2}+\left(\mu_{\alpha}\partial\phi+b\phi\right)w\partial w+\mu_{\alpha}\phi\left(\partial w\right)^{2}\right]dx.
\end{align*}
It follows that when $\lambda_{0}$ is large enough (larger than a constant that only depends on $b,\phi$
and $\mu_{\alpha}$), $\mathscr{A}_{\lambda_{0}}\left(w,\widehat{v}\right)\ge\widehat{C}_{\lambda_{0}}\left\Vert w\right\Vert _{W}^{2}$
for some positive constant $\widehat{C}_{\lambda_{0}}$. Moreover, by
Remark \ref{rem: test function}, there exists a positive constant
$\widehat{C}_{\phi}$ such that for all $w\in W$, we have $\left\Vert w\phi\right\Vert _{V}\le C_{\phi}\left\Vert w\right\Vert _{W}$.
This yields
\[
\inf_{w\in W}\sup_{v\in V}\dfrac{\mathscr{A}_{\lambda_{0}}\left(w,v\right)}{\left\Vert v\right\Vert _{V}\left\Vert w\right\Vert _{W}}\ge\dfrac{\widehat{C}_{\lambda_{0}}}{C_{\phi}}.
\]
Using  similar arguments, for $\lambda_{0}$ large enough, there exist
two positive constants $C_{\lambda_{0}}$ and $C_{\psi}$ such that
\[
\inf_{v\in V}\sup_{w\in W}\dfrac{\mathscr{A}_{\lambda_{0}}\left(w,v\right)}{\left\Vert w\right\Vert _{W}\left\Vert v\right\Vert _{V}}\ge\dfrac{C_{\lambda_{0}}}{C_{\psi}}.
\]
From Banach-Necas-Babu\v ska lemma (see \cite{EG2004} or \cite{BF1991}),
there exists a constant $\overline{C}$ such that for all $\overline{m}\in L^{2}\left(\Gamma\right)$,
there exists a unique solution $m$ of \eqref{eq: Fokker-Planck with lambda_0} with $h=\lambda_0 \overline{m}$
and $\left\Vert m\right\Vert _{W}\le\overline{C}\left\Vert \overline{m}\right\Vert _{L^{2}\left(\Gamma\right)}$.
Hence, the map $T$ is well-defined and continuous from $L^{2}\left(\Gamma\right)$
to $W$.

\emph{Step 2:} 
 Let $K$ be the set defined by
\[
K:=\left\{ m\in L^{2}\left(\Gamma\right):m\ge0\text{ and }\int_{\Gamma}mdx=1\right\} .
\]
We claim that $T\left(K\right)\subset K$ which means
$\int_{\Gamma}m=1$ and $m\ge0$. Indeed, using $v=1$ as a test function
in \eqref{eq: Fokker-Planck with lambda_0}, we have $\int_{\Gamma}mdx=\int_{\Gamma}\overline{m}dx=1$.
Next, consider the negative part $m^-$ of $m$ defined by $m^-(x)=  -\mathds{1}_{\{m(x)<0\}}  m(x)$.
Notice that $m^{-}\in W$ and $m^{-}\phi\in V$, where $\phi$
is given by Definition \ref{def: test function}. Using $m^{-}\phi$
as a test function in \eqref{eq: Fokker-Planck with lambda_0} yields
\[
\sum_{\alpha\in\mathcal{A}}-\int_{\Gamma_{\alpha}}\left[\left(\lambda_{0}\phi +b\partial\phi\right)(m^{-})^{2}+\mu_{\alpha}(\partial m^{-})^{2}\phi+\left(\mu_{\alpha}\partial\phi+b\phi\right)m^{-}\partial m^{-}\right]dx=\int_{\Gamma}\lambda_{0}\overline{m}m^{-}\phi dx.
\]
We can see that the right hand side is non-negative. Moreover, for
$\lambda_{0}$ large enough (larger than the same constant as above, which only depends on $b,\phi$ and $\mu_{\alpha}$),
the left hand side is non-positive. This implies that $m^{-}=0$,
and hence $m\ge 0$. Therefore, the claim is proved.

\emph{Step 3:} We claim that $T$ has a fixed point.
Let us now focus on the case when $\overline m \in K$.
 Using
$m\phi$ as a test function in \eqref{eq: Fokker-Planck with lambda_0} yields
\begin{equation}
\sum_{\alpha\in\mathcal{A}}\int_{\Gamma_{\alpha}}\left[\left(\lambda_{0} \phi+b\partial\phi\right)m^{2}+\mu_{\alpha}\left(\partial m\right)^{2}\phi
+\left(\mu_{\alpha}\partial\phi+b\phi\right)m\left(\partial m\right)\right]dx=\int_{\Gamma}\lambda_{0}\overline{m}m\phi dx.\label{eq: estimate fixed point}
\end{equation}
Since $H^{1}\left(0,\ell_{\alpha}\right)$ is continuously embedded
in $L^{\infty}\left(0,\ell_{\alpha}\right)$, there exists a positive
constant $C$ (independent of $\overline m \in K$) such that
\[
\int_{\Gamma}\overline{m}m\phi dx\le\int_{\Gamma}\overline{m}dx\left\Vert m\right\Vert _{L^{\infty}\left(\Gamma\right)}\overline{\phi} = 
\Vert m \Vert _{L^{\infty}\left(\Gamma\right)} \overline{\phi} \le C\left\Vert m\right\Vert _{W}.
\]
Hence, from \eqref{eq: estimate fixed point}, for $\lambda_{0}$
large enough, there exists a positive constant $C_{1}$
such that $C_{1}\left\Vert m\right\Vert _{W}^{2}\le\lambda_{0}C\left\Vert m\right\Vert _{W}$.
Thus
\begin{equation}
\left\Vert m\right\Vert _{W}\le\dfrac{\lambda_{0}C}{C_{1}}.
\end{equation}
Therefore, $T\left(K\right)$ is bounded in $W$. Since the bounded subsets of $W$ are relatively compact
 in $L^{2}\left(\Gamma\right)$, $\overline{T\left(K\right)}$
is compact in $L^{2}\left(\Gamma\right)$. Moreover, we can see that $K$ is closed and convex
in $L^{2}\left(\Gamma\right)$. By Schauder fixed point theorem, see
\cite[Corollary 11.2]{GT2001}, $T$ has a fixed point $\widehat{m}\in K$
which is also a solution of \eqref{eq: Fokker-Planck}
and $\left\Vert \widehat{m}\right\Vert _{W}\le \lambda_{0}C/C_{1}$.

Finally, from the differential equation in \eqref{eq: Fokker-Planck}, for all $\alpha\in\mathcal{A}$, $\left(\widehat{m}_{\alpha}'+b_{\alpha}\widehat{m}_{\alpha}\right)'=0$ on $(0,\ell_{\alpha})$.
Hence, there exists a constant  $C_{\alpha}$ such that 
\begin{equation}
 \label{eq:18}
 \widehat{m}_{\alpha}'+b_{\alpha}\widehat{m}_{\alpha}=C_{\alpha},\quad  \hbox{for all }x\in (0,\ell_{\alpha}).
\end{equation}
It follows that
 $\widehat{m}'_{\alpha}\in C( [0,\ell_{\alpha}])$, for all $\alpha\in\mathcal{A}$.
Hence $\widehat{m}_{\alpha}\in C^{1}([0,\ell_{\alpha}])$
for all $\alpha\in\mathcal{A}$. Thus, $\widehat{m}\in \mathcal{W}$.
\end{proof}
\begin{rem}
\label{rem: smooth solution FK}Let $m\in W$ be a solution of \eqref{eq: Fokker-Planck}.
If $b,\partial b\in PC\left(\Gamma\right)$,  standard arguments yield 
 that $m_{\alpha}\in C^{2} ([0,\ell_{\alpha}])$ for all $\alpha\in\mathcal{A}$. Moreover, by Theorem \ref{thm: existence and uniqueness FK},
there exists a constant $C$ which depends only on $\left\Vert b\right\Vert _{\infty},\left\{ \left\Vert \partial b_{\alpha}\right\Vert _{\infty}\right\} _{\alpha\in\mathcal{A}}$
and $ \mu_{\alpha}$ such that $\left\Vert m_{\alpha}\right\Vert _{C^{2}(0,\ell_{\alpha})}\le C$
for all $\alpha\in\mathcal{A}$. 
\end{rem}
\begin{proof}
[Proof of the positivity in Theorem \ref{thm: existence and uniqueness FK}]
 From
\eqref{eq: normalization and positive}, $\widehat{m}$ is
non-negative on $\Gamma$. Assume by contradiction that there exists
$x_{0}\in\Gamma_{\alpha}$ for some $\alpha\in\mathcal{A}$ such that
$\widehat{m}|_{\Gamma_{\alpha}}\left(x_{0}\right)=0$.
 Therefore, the minimum of $\widehat{m}$ over $\Gamma$ is achieved at 
$x_{0} \in \Gamma_{\alpha}$. 
 If $x_{0}\in \Gamma_{\alpha}\backslash \mathcal{V}$, then $\partial \widehat{m}(x_{0})=0$.
 In (\ref{eq:18}), we thus have $C_{\alpha}=0$, and hence $\widehat{m}_{\alpha}$ satisfies
\[
\widehat{m}_{\alpha}'+b_{\alpha}\widehat{m}_{\alpha}=0,\quad \text{on }\left[0,\ell_{\alpha}\right],\\
\]
with $\widehat{m}_{\alpha}\left(\pi_{\alpha} ^{-1}(x_{0})\right)=0$. It follows that $\widehat m_{\alpha}\equiv 0$
 and $\widehat{m}|_{{\Gamma_{\alpha}}}(\nu_{i})=\widehat{m}|_{{\Gamma_{\alpha}}}(\nu_{j})=0$
 if $\Gamma_\alpha=[\nu_{i},\nu_{j}]$. 

Therefore, it is enough to consider $x_{0} \in \mathcal{V}$.\\
Now, from Remark~\ref{rem:new network},  we may assume without loss of generality that $x_{0}=\nu_{i}$ and $\pi_{\beta}(\nu_{i})=0$ for all $\beta \in \mathcal{A}_i$.  We have the following two cases.

\emph{Case 1:} if $x_{0}=\nu_{i}$  is a transition vertex, then,
since $\widehat{m}$ belongs to $W$, we get 
\begin{equation}
\widehat{m}|_{\Gamma_{\beta}}\left(\nu_{i}\right)=\dfrac{\gamma_{i\beta}}{\gamma_{i\alpha}}\widehat{m}|_{\Gamma_{\alpha}}\left(\nu_{i}\right)=0,\quad\text{for all }\beta\in\mathcal{A}_{i}.\label{eq: jump condition}
\end{equation}
This yields that $\nu_{i}$ is also a minimum point of $\widehat{m}|_{\Gamma_{\beta}}$
for all $\beta\in\mathcal{A}_{i}$. Thus $\partial_{\beta}\widehat{m}\left(\nu_{i}\right)\le0$
for all $\beta\in\mathcal{A}_{i}$. From the transmission condition in \eqref{eq: Fokker-Planck}
which has a classical meaning from the regularity of $\widehat{m}$,
$\partial_{\beta}\widehat{m}\left(\nu_{i}\right)=0$, since all the coefficients
$\mu_{\beta}$ are positive. From (\ref{eq:18}), for all
$\beta\in\mathcal{A}_{i}$, we have
\[
C_{\beta}=\widehat{m}'_{\beta}(0)+b_{\beta}(0)\widehat{m}_{\beta}(0)=0.
\]
Therefore, $\widehat{m}'_{\beta}(y)+b_{\beta}(y)\widehat{m}_{\beta}(y)=0$,
for all $y\in [0,\ell_{\beta}]$ with $\widehat{m}_\beta(0)=0$.
 This implies that $\widehat{m}_{\beta}\equiv 0$ for all $\beta \in \mathcal{A}_i$.
We can propagate the arguments from the vertices connected to $\nu_i$. Since $\Gamma$ is connected,
 we obtain that $\widehat{m}\equiv 0$ on $\Gamma$.

\emph{Case 2:} if $x_{0}=\nu_{i}$  is a boundary vertex, then the 
Robin condition in \eqref{eq: Fokker-Planck} implies that $\partial_{\alpha}\widehat{m}\left(\nu_{i}\right)=0$
since $\mu_{\alpha}$ is positive. From (\ref{eq:18}),
we have $C_{\alpha}=0$.
Therefore, $\widehat{m}'_{\alpha}(y)+b_{\alpha}(y)\widehat{m}_{\alpha}(y)=0$,
for all $y\in [0,\ell_{\alpha}]$ with $\widehat{m}_\alpha(0)=0$. This implies that $\widehat{m}\left(\nu_{j}\right)=0$
where $\nu_{j}$ is the other endpoint of $\Gamma_{\alpha}$.  We are back to Case 1, so $\widehat{m}\equiv 0$ on $\Gamma$.

Finally, we have found that $\widehat{m}\equiv 0$ on $\Gamma$, in contradiction with 
 $\int_{\Gamma}\widehat{m}dx=1$.
\end{proof}
Now we prove uniqueness for \eqref{eq: Fokker-Planck}-\eqref{eq: normalization and positive}.
\begin{proof}
[Proof of uniqueness in Theorem \ref{thm: existence and uniqueness FK}]The
proof of uniqueness is similar to the argument in \cite[Proposition 13]{CM2016}.
As in the proof of Lemma \ref{lem: linear 3}, we can prove that for $\lambda_{0}$ large enough, there exists a constant $C$ such
that for any $f\in V'$, there exists a unique $w\in W$ which satisfies
\begin{equation}
\mathscr{A}_{\lambda_{0}}\left(w,v\right)=\left\langle f,v\right\rangle _{V', V}\text{ for all }v\in V.\label{eq: Fokker-Planck with lambda}
\end{equation}
and $\left\Vert w\right\Vert _{W}\le C\left\Vert f\right\Vert _{V'}$.
This allows us to define the continuous linear operator
\begin{align*}
S_{\lambda_{0}}:L^{2}\left(\Gamma\right) & \longrightarrow W,\\
f & \longmapsto w,
\end{align*}
where $w$ is a solution of \eqref{eq: Fokker-Planck with lambda}.
Then we define $R_{\lambda_{0}}=\mathcal{J}\circ S_{\lambda_{0}}$
where $\mathcal{J}$ is the injection from $W$ in $L^{2}\left(\Gamma\right)$,
which is compact. Obviously, $R_{\lambda_{0}}$ is a compact operator
from $L^{2}\left(\Gamma\right)$ into $L^{2}\left(\Gamma\right)$. Moreover,
$m\in W$ is a solution of \eqref{eq: Fokker-Planck} if and only if $m\in\ker\left(Id-\lambda_{0}R_{\lambda_{0}}\right)$.
By Fredholm alternative, see \cite{GT2001}, $\dim\ker\left(Id-\lambda_{0}R_{\lambda_{0}}\right)=\dim\ker\left(Id-\lambda_{0}R_{\lambda_{0}}^{\star}\right)$.

In order to characterize $R_{\lambda_{0}}^{\star}$, we now consider
the following boundary value problem for $g\in L^2(\Gamma)\subset W'$:
\begin{equation}
\begin{cases}
\lambda_{0}v-\mu_{\alpha}\partial^{2}v+b\partial v=g, & \text{in }\Gamma_{\alpha}\backslash\mathcal{V},\alpha\in\mathcal{A},\\
v|_{\Gamma_\alpha} (\nu_i) =v|_{\Gamma_\beta} (\nu_i) \quad & \alpha,\beta \in \cA_i,\; i\in I, \\
{\displaystyle \sum_{\alpha\in\mathcal{A}_{i}}\gamma_{i\alpha}\mu_{\alpha}\partial_{\alpha}v\left(\nu_{i}\right)=0,} & i\in I.
\end{cases}\label{eq: adjoint problem}
\end{equation}
A weak solution of \eqref{eq: adjoint problem} is a function $v\in V$
such that
\[
\mathscr{T}_{\lambda_{0}}\left(v,w\right):=\sum_{\alpha\in\mathcal{A}}\int_{\Gamma_{\alpha}}(\lambda_{0}vw+\mu_{\alpha}\partial v\partial w+bw\partial v)dx=\int_{\Gamma}gwdx,\quad\text{for all }w\in W.
\]
Using similar arguments as in the proof of existence in
Theorem \ref{thm: existence and uniqueness FK}, we see that for $\lambda_0$ large enough and 
all $g\in L^{2}\left(\Gamma\right)$, there exists a unique solution
$v\in V$ of \eqref{eq: adjoint problem}. Moreover, there exists
a constant $C$ such that $\left\Vert v\right\Vert _{V}\le C\left\Vert g\right\Vert _{L^{2}\left(\Gamma\right)}$
for all $g\in L^{2}\left(\Gamma\right)$. This allows us to define
a continuous operator 
\begin{align*}
T_{\lambda_{0}}:L^{2}\left(\Gamma\right) & \longrightarrow V,\\
g & \longmapsto v.
\end{align*}
Then we define $\tilde{R}_{\lambda_{0}}=\mathcal{I}\circ T_{\lambda_{0}}$
where $\mathcal{I}$ is the injection from $V$ in $L^{2}\left(\Gamma\right)$.
Since $\mathcal{I}$  compact, $\tilde{R}_{\lambda_{0}}$ is a compact
operator from $L^{2}\left(\Gamma\right)$ into $L^{2}\left(\Gamma\right)$.
For any $g\in L^2(\Gamma)$, set $v=T_{\lambda_{0}}g$.
 Noticing that $\mathscr{T}_{\lambda_{0}}(v,w)=\mathcal{A}_{\lambda_{0}}(w,v)$ for all $v\in V,w\in W$, we obtain            that
\[
\left(g,R_{\lambda_{0}}f\right)_{L^{2}\left(\Gamma\right)}
=\mathscr{T}_{\lambda_{0}}\left(v,S_{\lambda_{0}}f\right)=\mathscr{A}_{\lambda_{0}}\left(S_{\lambda_{0}}f,v\right)=\left(f,v\right)_{L^{2}\left(\Gamma\right)}=(f,\tilde{R}_{\lambda_{0}}g)_{L^{2}\left(\Gamma\right)}.
\]
Thus $R_{\lambda_{0}}^{\star}=\tilde{R}_{\lambda_{0}}$. 
But $\ker\left(Id-\lambda_{0}\tilde R_{\lambda_{0}}\right)$ is the set of solutions of (\ref{eq: linear 2}), which, from Lemma~\ref{lem: linear 2}, consists of constant functions on $\Gamma$. 
This implies that $\dim \ker\left(Id-\lambda_{0}R_{\lambda_{0}}^{\star}\right) =1$ and then that
$\dim\ker\left(Id-\lambda_{0}R_{\lambda_{0}}\right)=\dim\ker\left(Id-\lambda_{0}R_{\lambda_{0}}^{\star}\right)=1$.
Finally, since the solutions $m$ of \eqref{eq: Fokker-Planck} are in $\ker\left(Id-\lambda_{0}R_{\lambda_{0}}\right)$ and satisfy 
 the normalization condition $\int_{\Gamma}mdx=1$, we obtain the desired uniqueness property in Theorem~\ref{thm: existence and uniqueness FK}.
\end{proof}

\section{\label{sec: H-J equation and ergodic problem}Hamilton-Jacobi equation
and the ergodic problem}

\subsection{\label{subsec: H-J equation}The Hamilton-Jacobi equation}

This section is devoted to the following boundary value problem
including a Hamilton-Jacobi equation:
\begin{equation}
\begin{cases}
-\mu_{\alpha}\partial^{2}v+H\left(x,\partial v\right)+\lambda v=0, & \text{in }\Gamma_{\alpha}\backslash\mathcal{V},\alpha\in A,\\
v|_{\Gamma_\alpha} (\nu_i) =v|_{\Gamma_\beta} (\nu_i), \quad & \alpha,\beta \in \cA_i, \; i\in I,
\\
{\displaystyle \sum_{\alpha\in\mathcal{A}_{i}}}\gamma_{i\alpha}\mu_{\alpha}\partial_{\alpha}v\left(\nu_{i}\right)=0, & i\in I,
\end{cases}\label{eq:HJ}
\end{equation}
where $\lambda$ is a positive constant and the Hamiltonian $H:\Gamma\times\mathbb{R}\rightarrow\mathbb{R}$
is defined in Section \ref{sec: Mean-field-games}, except that, in \eqref{eq:HJ} and the whole Section \ref{subsec: H-J equation}
below, the Hamiltonian contains the coupling term, i.e, $H\left(x,\partial v\right)$
in \eqref{eq:HJ} plays the role of $H\left(x,\partial v\right)-F\left(m\left(x\right)\right)$
in \eqref{eq: MFG system}.

\begin{defn}
  \begin{itemize}

\item  A classical solution of \eqref{eq:HJ} is a function $v\in C^{2}\left(\Gamma\right)$
which satisfies \eqref{eq:HJ} pointwise.

\item  A weak solution of \eqref{eq:HJ} is a function $v\in V$ such
that
\[
\sum_{\alpha\in\mathcal{A}}\int_{\Gamma_{\alpha}}\left(\mu_{\alpha}\partial v\partial w+H\left(x,\partial v\right)w+\lambda vw\right)dx=0\quad\text{for all }w\in W.
\]
\end{itemize}
\end{defn}

\begin{prop}
\label{prop: main}Assume that
\begin{align}
 & H_{\alpha}\in C\left(\left[0,\ell_{\alpha}\right]\times \mathbb{R}\right),\label{eq: continuity H}\\
 & \left|H\left(x,p\right)\right|\le C_{2}\left(1+\left|p\right|^{2}\right)\text{ for all }x\in\Gamma,p\in\mathbb{R},\label{eq: bounded H}
\end{align}
where $C_{2}$ is a positive constant. There exists a classical solution
$v$ of \eqref{eq:HJ}. Moreover, if $H_{\alpha}$ is locally
Lipschitz with respect to both variables for all $\alpha\in\mathcal{A}$, 
then the solution $v$ belongs to $C^{2,1}\left(\Gamma\right)$.
\end{prop}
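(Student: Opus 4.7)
The approach combines an a priori $L^\infty$ estimate via the maximum principle, a truncation of the gradient dependence of $H$, a Schauder fixed point argument relying on the linear theory of Lemma \ref{lem: linear 3}, and a passage to the limit.

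First, I would establish that any classical solution $v$ of \eqref{eq:HJ} satisfies $\|v\|_{L^\infty(\Gamma)} \le C_2/\lambda$ where $C_2$ is from \eqref{eq: bounded H}. At an interior maximum $x_0 \in \Gamma_\alpha \backslash \mathcal{V}$, one has $\partial v(x_0)=0$ and $\partial^2 v(x_0)\le 0$, so the equation yields $\lambda v(x_0) \le -H(x_0,0) \le C_2$. If the maximum is at a vertex $\nu_i$, then $\partial_\alpha v(\nu_i)\le 0$ for all $\alpha\in\mathcal{A}_i$, and the Kirchhoff condition $\sum_\alpha \gamma_{i\alpha}\mu_\alpha \partial_\alpha v(\nu_i)=0$ with positive coefficients forces each $\partial_\alpha v(\nu_i)=0$, reducing to the interior case by a limiting argument on an adjacent edge. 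The minimum is treated symmetrically.

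Next, for $N\ge 1$ I would introduce a radial truncation $\tau_N(p)=p$ if $|p|\le N$ and $\tau_N(p)=Np/|p|$ otherwise, and set $H_N(x,p)=H(x,\tau_N(p))$, which is continuous and bounded on $\Gamma\times\mathbb{R}$. Define $T_N\colon V\to V$ by setting $T_N(\bar v)=v$, where $v$ is the unique weak solution given by Lemma \ref{lem: linear 3} of the linear problem
\begin{equation*}
  -\mu_\alpha\partial^2 v+\lambda v = -H_N(x,\partial \bar v) \text{ in } \Gamma_\alpha\setminus\mathcal{V},\quad v|_{\Gamma_\alpha}(\nu_i)=v|_{\Gamma_\beta}(\nu_i),\quad \sum_{\alpha\in\mathcal{A}_i}\gamma_{i\alpha}\mu_\alpha\partial_\alpha v(\nu_i)=0.
\end{equation*}
Since $H_N(\cdot,\partial\bar v)\in L^\infty(\Gamma)\subset W'$, this is well defined. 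The bound $H_N(\cdot,0)=H(\cdot,0)$ together with the maximum principle of the first step (applied to this linear problem viewed as an HJ-type equation with datum $H_N(\cdot,\partial \bar v)$, or more directly via $|H_N|\le C_2$ once bounded by $\sup|H(\cdot,0)|+\lambda\|v\|_\infty\cdots$) shows that $T_N$ leaves invariant a closed ball $K\subset V$ whose radius is uniform in $N$. Standard elliptic regularity on each edge (solving an ODE with $L^\infty$ datum) improves the range of $T_N$ to $\bigcap_\alpha H^2(0,\ell_\alpha)\cap V$, which embeds compactly into $V$; continuity of $T_N$ on $K$ follows from the linear estimate together with dominated convergence. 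Schauder's fixed point theorem then produces a solution $v_N\in K$ of the truncated problem, which is automatically a classical solution by the same regularity argument.

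Finally, I would derive $N$-uniform estimates permitting the passage $N\to\infty$. The $L^\infty$ estimate above is immediately uniform. The delicate point is a uniform gradient bound: exploiting the one-dimensionality of each edge, I would apply a Bernstein-type argument on $w=(\partial v_N)^2$ combined with the known bound on $v_N$, or equivalently test the equation against $v_N\psi\in W$ with $\psi$ from Definition \ref{def: test function} and absorb the bad term using the $L^\infty$ control, to obtain $\|\partial v_N\|_{L^\infty(\Gamma_\alpha)}\le C$ independent of $N$. Once $\|\partial v_N\|_\infty\le C<N$ for $N$ large, $H_N(\cdot,\partial v_N)=H(\cdot,\partial v_N)$ and one extracts $v_N\to v$ in $C^1$ along each edge, yielding a classical solution of \eqref{eq:HJ}. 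The edge equation then reads $-\mu_\alpha\partial^2 v = -H(x,\partial v)-\lambda v\in C^0$, hence $v\in C^2(\Gamma)$; if $H_\alpha$ is locally Lipschitz in both variables, bootstrapping once more gives $\partial^2 v\in C^{0,1}$ on each edge, whence $v\in C^{2,1}(\Gamma)$.

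\textbf{Main obstacle.} The heart of the argument is the uniform-in-$N$ gradient estimate under the critical quadratic growth assumption \eqref{eq: bounded H} without convexity: a naive test against $v_N$ produces a term $C_2\|v_N\|_\infty\int|\partial v_N|^2$ that competes with the coercive term $\underline\mu\int|\partial v_N|^2$, so absorption is not automatic. The one-dimensional structure of each edge (giving $H^1\hookrightarrow L^\infty$) and the Kirchhoff condition, which prevents mass concentration at the vertices, are exactly what make the Bernstein or exponential-test-function trick succeed here.
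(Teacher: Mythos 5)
Your overall architecture is the same as the paper's: an $L^\infty$ bound by comparison (the paper's Lemma \ref{lem: maximum principle} applied to the constant sub/supersolutions $\pm c/\lambda$), truncation of $H$ in the gradient variable, a Schauder fixed point for the truncated problem built on the linear solvability of Lemma \ref{lem: linear 3} and the compact embedding of $H^2(\Gamma)$ into $V$ (this is exactly the paper's Lemma \ref{lem: H is bounded}), then uniform estimates and a passage to the limit. Two of your intermediate claims are inaccurate but harmless: the invariant ball for $T_N$ has radius of order $C_2(1+N^2)$, not uniform in $N$ (uniformity is irrelevant for existence at fixed $N$), and at a maximum vertex the outward derivatives $\partial_\alpha v(\nu_i)$ are $\ge 0$, not $\le 0$, with the paper's sign convention.

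The genuine gap is the uniform-in-$N$ gradient estimate, which you correctly identify as the heart of the matter but do not actually carry out, and the two concrete routes you offer do not work as stated. A Bernstein argument on $w=(\partial v_N)^2$ requires differentiating the equation in $x$, hence $H\in C^1$, whereas Proposition \ref{prop: main} only assumes continuity \eqref{eq: continuity H}. Testing against $v_N\psi$ is, as you yourself observe in your final paragraph, insufficient: the quadratic term produces $C_2\|v_N\|_\infty\int|\partial v_N|^2$, which cannot be absorbed by $\underline{\mu}\int|\partial v_N|^2$ without a smallness condition. The paper resolves this with the Boccardo--Murat--Puel test function $w_N=e^{Kv_N^2}v_N\psi\in W$: the extra term $2K\mu_\alpha\psi\,v_N^2(\partial v_N)^2e^{Kv_N^2}$ generated by differentiating the exponential weight dominates the bad term $c\,\psi\,|v_N|(\partial v_N)^2e^{Kv_N^2}$ once $K$ is chosen larger than $(1+c^2/4\underline{\mu})/\underline{\mu}$, and this is what yields the uniform bound on $\|\partial v_N\|_{L^2(\Gamma)}$. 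You allude to an ``exponential-test-function trick'' in your closing remarks, so you know the right tool exists, but naming it is not the same as verifying that the absorption constants close, and the two alternatives you present as equivalent are not. Once the uniform $V$-bound is in hand, your endgame (the equation gives $\partial^2v_N$ bounded in $L^1$, hence $\partial v_N$ bounded in $L^\infty$ via $W^{1,1}\hookrightarrow C$, so $H_N=H$ on the solution for large $N$, and $C^{1,\sigma}$ compactness passes to the limit) is fine and matches the paper, as does the final $C^{2,1}$ bootstrap under local Lipschitz continuity of $H$.
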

\begin{rem}\label{rem: regularity for HJ}
Assume \eqref{eq: continuity H} and that $v\in H^2(\Gamma)\subset V$ is a  weak solution of \eqref{eq:HJ}. From the compact embedding of $H^{2}\left(0,\ell_{\alpha}\right)$ into $C^{1,\sigma}([0,\ell_{\alpha}])$ for all 
$\sigma\in (0,1/2)$, we get $v\in C^{1,\sigma}(\Gamma)$. Therefore, from the differential equation in \eqref{eq:HJ}
$
\mu_{\alpha}\partial ^2 v_{\alpha}(\cdot)= H_\alpha(\cdot,\partial v_{\alpha}(\cdot))+\lambda v_{\alpha}(\cdot)\in C([0,\ell_{\alpha}])$.
It follows that $v$ is a classical solution of \eqref{eq:HJ}.
\end{rem}

\begin{rem}\label{sec:hamilt-jacobi-equat}
  Assume now  that $H$ is locally Lipschitz continuous and that $v\in H^2(\Gamma)\subset V$ is a  weak solution of \eqref{eq:HJ}.
 From Remark~\ref{rem: regularity for HJ}, $v \in C^{1,\sigma}(\Gamma)$ for $\sigma\in (0,1/2)$ and the function $ 
-\lambda v_{\alpha}-H_{\alpha}\left(\cdot,\partial v_{\alpha}\right)$ belongs to $C^{0,\sigma}([0,\ell_\alpha])$.
Then, from the first line of  \eqref{eq:HJ}, $v \in C^{2,\sigma}(\Gamma)$. This implies that $\partial v_\alpha\in {\rm{Lip}}[0,\ell_\alpha]$ and using the differential equation again, we see that $v\in C^{2,1}(\Gamma)$. 
\end{rem}

Let us start with the case when $H$ is a bounded Hamiltonian.
\begin{lem}
\label{lem: H is bounded}Assume \eqref{eq: continuity H} and for
some $C_{H}>0$,
\begin{equation}
\left|H\left(x,p\right)\right|\le C_{H},\quad\text{for all }\left(x,p\right)\in\Gamma\times\mathbb{R}.
\end{equation}
There exists a classical solution $v$ of \eqref{eq:HJ}. Moreover, if
$H_{\alpha}$ is locally Lipschitz in $[0,\ell_\alpha]\times \R$ for
all $\alpha\in\mathcal{A}$ then the solution $v$ belongs to $C^{2,1}\left(\Gamma\right)$.
\end{lem}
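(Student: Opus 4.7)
The plan is to produce $v$ as a fixed point of a compact map, in the spirit of the existence proof for the Fokker--Planck equation in Theorem~\ref{thm: existence and uniqueness FK}. For $u \in V = H^{1}(\Gamma)$, let $Tu \in V$ denote the unique weak solution provided by Lemma~\ref{lem: linear 3} of
\begin{equation*}
-\mu_\alpha \partial^{2}(Tu) + \lambda (Tu) = -H(\cdot,\partial u) \quad \text{in } \Gamma_\alpha \setminus \mathcal{V}, \quad \alpha \in \mathcal{A},
\end{equation*}
together with the continuity at the vertices and the Kirchhoff conditions from \eqref{eq:HJ}. The boundedness hypothesis $|H|\le C_{H}$ places the right-hand side in $L^{\infty}(\Gamma)\hookrightarrow L^{2}(\Gamma)\hookrightarrow W'$ with a norm independent of $u$, so Lemma~\ref{lem: linear 3} gives an a priori bound $\|Tu\|_{V}\le K$ for some $K=K(C_{H},\lambda,\text{network data})$. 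Fixed points of $T$ are precisely the weak solutions of \eqref{eq:HJ}.

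Continuity of $T$ is inherited from the linear theory: if $u_{n}\to u$ in $V$, then $\partial u_{n}\to \partial u$ in $L^{2}(\Gamma)$ and, along a subsequence, pointwise almost everywhere. By continuity of $H$ combined with the uniform bound $|H(\cdot,\partial u_{n})|\le C_{H}$, dominated convergence gives $H(\cdot,\partial u_{n}) \to H(\cdot,\partial u)$ in $L^{2}(\Gamma)$, and the continuous linear dependence in Lemma~\ref{lem: linear 3} upgrades this to $Tu_{n}\to Tu$ in $V$ along the subsequence; uniqueness of the limit then promotes subsequential to sequential convergence.

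For compactness, I would read the linear equation on each edge as the ODE $\mu_\alpha (Tu)''_\alpha = \lambda (Tu)_\alpha + H_\alpha(\cdot,\partial u_\alpha)$. The one-dimensional embedding $H^{1}(0,\ell_\alpha)\hookrightarrow L^{\infty}(0,\ell_\alpha)$ together with $\|Tu\|_{V}\le K$ yields a uniform $L^{\infty}$ bound on $Tu$, and the bound $|H|\le C_{H}$ then makes $(Tu)''_\alpha$ uniformly bounded in $L^{\infty}(0,\ell_\alpha)$; integrating once more gives a uniform bound on $(Tu)_\alpha$ in $W^{2,\infty}(0,\ell_\alpha)$. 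Since $W^{2,\infty}(0,\ell_\alpha)\hookrightarrow H^{1}(0,\ell_\alpha)$ compactly on each edge and continuity at the vertices is preserved because $H^{1}(\Gamma)\hookrightarrow C(\Gamma)$, the map $T$ sends the closed convex ball $B_{K}\subset V$ into itself and its image is relatively compact in $V$. Schauder's fixed point theorem (as in \cite[Corollary~11.2]{GT2001}) then delivers $v\in B_{K}$ with $Tv=v$. This $v$ is a weak solution of \eqref{eq:HJ} lying in $H^{2}(\Gamma)$, so Remark~\ref{rem: regularity for HJ} upgrades it to a classical solution; when $H_\alpha$ is locally Lipschitz on $[0,\ell_\alpha]\times\mathbb{R}$, Remark~\ref{sec:hamilt-jacobi-equat} further upgrades the regularity to $v\in C^{2,1}(\Gamma)$.

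The main obstacle I expect is checking the compactness step cleanly: one must verify that the $W^{2,\infty}$ bound on each edge is truly uniform in $u$, which relies on the $L^{\infty}$ bound on $Tu$ obtained from the $V$-bound via the 1D Sobolev embedding, and that the edgewise compact embedding glues into compactness in the global space $V$, which works precisely because the vertex continuity that defines $V$ is stable under $H^{1}$-convergence on $\Gamma$.
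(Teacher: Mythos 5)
Your proposal is correct and follows essentially the same route as the paper: the map $u\mapsto Tu$ defined through Lemma \ref{lem: linear 3}, a uniform bound and compactness of the image (the paper bounds $\partial^2(Tu)$ in $L^2(\Gamma)$ and uses the compact embedding $H^2(\Gamma)\hookrightarrow V$, while you get the slightly stronger edgewise $W^{2,\infty}$ bound, which changes nothing), continuity via a.e.\ convergence of $\partial u_n$ plus dominated convergence, Schauder's fixed point theorem, and the regularity upgrades from Remarks \ref{rem: regularity for HJ} and \ref{sec:hamilt-jacobi-equat}. The only cosmetic difference is that you prove continuity of $T$ directly from the linear stability estimate applied to $Tu_n-Tu$, whereas the paper extracts a $C^{1,\sigma}$-convergent subsequence of $Tu_n$ and identifies its limit by uniqueness; both are valid.
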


\begin{proof}
[Proof of Lemma \ref{lem: H is bounded}]For any $u\in V$, from Lemma
\ref{lem: linear 3}, the following boundary value problem:
\begin{equation}
\begin{cases}
-\mu_{\alpha}\partial^{2}v+\lambda v=-H\left(x,\partial u\right), & \text{if }x\in\Gamma_{\alpha}\backslash\mathcal{V},\alpha\in\mathcal{A},\\
v|_{\Gamma_\alpha} (\nu_i) =v|_{\Gamma_\beta} (\nu_i),\quad & \alpha,\beta \in \cA_i,\; i\in I,
\\
{\displaystyle \sum_{\alpha\in\mathcal{A}_{i}}}\gamma_{i\alpha}\mu_{\alpha}\partial_{\alpha}v\left(\nu_{i}\right)=0, &  i\in I,
\end{cases}\label{eq: fixed point}
\end{equation}
has a unique weak solution $v\in V$. This allows us to define the
map $T: V  \longrightarrow V$ by $T(u):=v$.
Moreover, from Lemma \ref{lem: linear 3}, there exists a constant $C$ such that
\begin{equation}
\left\Vert v\right\Vert _{V}\le C\left\Vert H\left(x,\partial u\right)\right\Vert _{L^{2}\left(\Gamma\right)}\le CC_{H}\left|\Gamma\right|^{1/2},\label{eq: boundedness v in V}
\end{equation}
where
$|\Gamma|=\Sigma_{\alpha\in\mathcal{A}}\ell_{\alpha}$. Therefore, from the differential equation in \eqref{eq: fixed point},
\begin{equation}
\underline{\mu}\left\Vert \partial^{2}v\right\Vert _{L^{2}\left(\Gamma\right)}\le\lambda\left\Vert v\right\Vert _{L^{2}\left(\Gamma\right)}+\left\Vert H\left(x,\partial u\right)\right\Vert _{L^{2}\left(\Gamma\right)}\le \lambda\left\Vert v\right\Vert _{V}+C_{H}\left|\Gamma\right|^{1/2} \le \left(\lambda C+1\right)C_{H}\left|\Gamma\right|^{1/2},\label{eq: boundedness v" in L2}
\end{equation}
where $\underline{\mu}:=\min_{\alpha\in\mathcal{A}}\mu_{\alpha}$.
From \eqref{eq: boundedness v in V} and \eqref{eq: boundedness v" in L2}, $T\left(V\right)$ is a bounded subset of $H^{2}\left(\Gamma\right)$, see Definition \ref{def: Sobolev space}.
From the compact embedding of $H^{2}\left(\Gamma\right)$ into $V$,
we deduce that $\overline{T\left(V\right)}$ is a compact subset of
$V$. 

Next, we claim that $T$ is continuous from $V$ to $V$. Assuming that
\begin{equation}
\begin{cases}
u_{n}\rightarrow u, & \text{in \ensuremath{V},}\\
v_{n}=T\left(u_{n}\right), & \text{for all \ensuremath{n},}\\
v=T\left(u\right),
\end{cases}\label{eq: converge in V}
\end{equation}
we need to prove that $v_{n}\rightarrow v$ in $V$. Since $\left\{ v_{n}\right\} $
is uniformly bounded in $H^{2}\left(\Gamma\right)$, then, up to the
extraction of a subsequence, $v_{n}\rightarrow\widehat{v}$ in $C^{1,\sigma}\left(\Gamma\right)$ for some $\sigma\in (0,1/2)$. From \eqref{eq: converge in V}, we have that $\partial u_{n}\rightarrow\partial u$
in $L^{2}\left(\Gamma_{\alpha}\right)$ for all $\alpha\in\mathcal{A}$.
This yields that, up to another extraction of a subsequence, $\partial u_{n}\rightarrow\partial u$
almost everywhere in $\Gamma_{\alpha}$. Thus $H\left(x,\partial u_{n}\right)\rightarrow H\left(x,\partial u\right)$
in $L^{2}\left(\Gamma_{\alpha}\right)$ by Lebesgue dominated convergence
theorem. Hence,  $\widehat{v}$ is a weak solution of \eqref{eq: fixed point}.
Since the latter is  unique,  $\widehat{v}=v$ and we can conclude that the whole sequence $ v_{n} $
converges to $v$. The claim is proved.

From  Schauder fixed point theorem,
see \cite[Corollary 11.2]{GT2001}, $T$ admits a fixed point which
is a weak solution of \eqref{eq:HJ}. Moreover, recalling that $v\in H^2(\Gamma)$, 
 we obtain that $v$ is a classical solution of \eqref{eq:HJ} from Remark \ref{rem: regularity for HJ}.

Assume now that $H$ is locally Lipschitz. 
 Since  $v_{\alpha}\in H^{2}\left(0,\ell_{\alpha}\right)$
 for all $\alpha\in\mathcal{A}$, we may use Remark \ref{sec:hamilt-jacobi-equat}
and obtain that $v\in C^{2,1}\left(\Gamma\right)$.
\end{proof}
\begin{lem}
\label{lem: maximum principle} If $v,u\in C^{2}\left(\Gamma\right)$
satisfy
\begin{equation}
\begin{cases}
-\mu_{\alpha}\partial^{2}v+H\left(x,\partial v\right)+\lambda v\ge-\mu_{\alpha}\partial^{2}u+H\left(x,\partial u\right)+\lambda u, & \text{if }x\in\Gamma_{\alpha}\backslash\mathcal{V},\alpha\in A,\\
{\displaystyle \sum_{\alpha\in\mathcal{A}_{i}}\gamma_{i\alpha}\mu_{\alpha}\partial_{\alpha}v\left(\nu_{i}\right)\ge\sum_{\alpha\in\mathcal{A}_{i}}\gamma_{i\alpha}\mu_{\alpha}\partial_{\alpha}u\left(\nu_{i}\right)}, & \text{if }\nu_{i}\in\mathcal{V},
\end{cases}\label{eq: maximum principle}
\end{equation}
then $v\ge u$.
\end{lem}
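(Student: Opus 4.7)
The plan is to argue by contradiction. Set $w := u - v$ and assume $M := \max_{\Gamma} w > 0$, attained at some $x_{0} \in \Gamma$. The central observation is that at any contact point the gradients of $u$ and $v$ coincide, so the nonlinear Hamiltonian terms in \eqref{eq: maximum principle} cancel automatically; no linearisation or convexity of $H$ is needed. If $x_{0} \in \Gamma_{\alpha}\backslash\cV$ (interior case), then $\partial w(x_{0}) = 0$ and $\partial^{2}w(x_{0}) \le 0$; consequently $\partial u(x_{0}) = \partial v(x_{0})$, the $H$-terms in the first line of \eqref{eq: maximum principle} cancel at $x_{0}$, and we obtain $\mu_{\alpha}(\partial^{2}u - \partial^{2}v)(x_{0}) \ge \lambda (u-v)(x_{0}) = \lambda M > 0$, contradicting $\partial^{2}w(x_{0}) \le 0$.

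Suppose instead $x_{0} = \nu_{i} \in \cV$. For every $\alpha \in \cA_{i}$, $w|_{\Gamma_{\alpha}}$ attains its maximum at $\nu_{i}$, so $\partial_{\alpha}w(\nu_{i}) \ge 0$. Rewriting the Kirchhoff inequality in \eqref{eq: maximum principle} for $w$ gives $\sum_{\alpha\in\cA_{i}}\gamma_{i\alpha}\mu_{\alpha}\partial_{\alpha}w(\nu_{i}) \le 0$, and the positivity of the $\gamma_{i\alpha}\mu_{\alpha}$ forces $\partial_{\alpha}w(\nu_{i}) = 0$ for every $\alpha \in \cA_{i}$; through \eqref{eq:2}, this means $\partial u|_{\Gamma_{\alpha}}(\nu_{i}) = \partial v|_{\Gamma_{\alpha}}(\nu_{i})$ for each such $\alpha$. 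Since $u, v \in C^{2}(\Gamma)$ and $H$ is continuous, the differential inequality extends by one-sided continuity from $\Gamma_{\alpha}\backslash\cV$ up to $\nu_{i}$; the $H$-terms cancel once more and yield $\mu_{\alpha}(\partial^{2}u|_{\Gamma_{\alpha}} - \partial^{2}v|_{\Gamma_{\alpha}})(\nu_{i}) \ge \lambda M > 0$. Parametrising $\Gamma_{\alpha}$ so that $\nu_{i} = \pi_{\alpha}(0)$, this reads $w''_{\alpha}(0) > 0$, while $w_{\alpha} \le M = w_{\alpha}(0)$ on $[0,\ell_{\alpha}]$ together with $w'_{\alpha}(0) = 0$ forces $w''_{\alpha}(0) \le 0$ by Taylor expansion, a contradiction. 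The boundary-vertex case ($\sharp(\cA_{i}) = 1$) is covered identically, the Kirchhoff inequality collapsing into a single Neumann inequality.

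The only point requiring care is aligning the outward directional derivatives $\partial_{\alpha}w(\nu_{i})$ with the one-sided derivatives of $w_{\alpha}$ according to the signs $n_{i\alpha}$ from \eqref{eq:1}--\eqref{eq:2}, and extending the pointwise inequality on $\Gamma_{\alpha}\backslash\cV$ up to $\nu_{i}$, both of which are routine given $u, v \in C^{2}(\Gamma)$ and the continuity of $H$. Crucially, the positivity of $\lambda$ is used in both cases to produce the strict sign $\lambda M > 0$, which is what drives the contradiction; without this, one would need to fall back on a Hopf-type boundary-point argument at the vertex, but the present approach avoids that entirely.
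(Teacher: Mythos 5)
Your proof is correct and follows essentially the same route as the paper: contradiction at a maximum point of $u-v$, splitting into the interior case and the vertex case, using the sign of the outward derivatives together with the Kirchhoff inequality to force $\partial_\alpha u(\nu_i)=\partial_\alpha v(\nu_i)$, and then extracting a second-order contradiction (the paper phrases it as local strict convexity of $u_\alpha-v_\alpha$ near the vertex, you phrase it as a Taylor expansion — the same argument).
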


\begin{proof}
[Proof of Lemma \ref{lem: maximum principle}] The proof is reminiscent of an argument in  \cite{CMS2013}. Suppose by contradiction
that $\delta:=\max_\Gamma \left\{ u-v\right\} >0$. Let $x_{0}\in\Gamma_{\alpha}$
be a maximum point of $u-v$. It suffices to consider the case when $x_0 \in \mathcal{V}$, since if $x_0 \in \Gamma \backslash \mathcal{V}$, then 
$
u\left(x_{0}\right)>v\left(x_{0}\right)$, $\partial u\left(x_{0}\right)=\partial v\left(x_{0}\right)$,
$\partial^{2}u\left(x_{0}\right)\le\partial^{2}v\left(x_{0}\right)$,
and we obtain a contradiction with the first line of \eqref{eq: maximum principle}.
\\
Now consider the case when $x_{0}=\nu_{i}\in \mathcal{V}$; from Remark~\ref{rem:new network}, we
can assume without restriction that $\pi_{\alpha}\left(0\right)=\nu_{i}$.
Since $u-v$ achieves its maximum over $\Gamma$ at $\nu_{i}$, we obtain
that
$
\partial_{\beta}u\left(\nu_{i}\right) \ge\partial_{\beta}v\left(\nu_{i}\right)$, for all $\beta\in\mathcal{A}_{i}
$.
From Kirchhoff conditions in \eqref{eq: maximum principle}, this
implies that
$
\partial_{\beta}u\left(\nu_{i}\right)=\partial_{\beta}v\left(\nu_{i}\right)$, for all $\beta\in\mathcal{A}_{i}$.
It follows that $\partial v_{\alpha}(0)= \partial u_{\alpha}(0)$. Using the first line of \eqref{eq: maximum principle}, we get that
\[
-\mu_{\alpha}\left[\partial^{2}v_{\alpha}(0)-\partial^{2}u_{\alpha}(0)\right]\ge\underset{=0}{\underbrace{H_{\alpha}\left(0,\partial u_{\alpha}(0)\right)-H_\alpha\left(0,\partial v_{\alpha}(0\right))}}+\lambda\left(u_{\alpha}(0)-v_{\alpha}(0)\right)>0.
\]
Therefore, $u_{\alpha}-v_{\alpha}$ is locally strictly convex in $[0,\ell_{\alpha}]$
near $0$ and its first order derivative vanishes at $0$. This contradicts
the fact that $\nu_{i}$ is the maximum point of $u-v$.
\end{proof}
We now turn to Proposition \ref{prop: main}.
\begin{proof}
[Proof of Proposition \ref{prop: main}]We adapt the classical proof of Boccardo,
Murat and Puel in \cite{BMP1983}. First of all, we truncate the Hamiltonian
as follows:
\[
H_{n}\left(x,p\right)=\begin{cases}
H\left(x,p\right), & \text{if }\left|p\right|\le n,\\
H\left(x,\dfrac{p}{\left|p\right|}n\right), & \text{if }\left|p\right|>n.
\end{cases}
\]
By Lemma \ref{lem: H is bounded}, for all $n\in\mathbb{N}$, since
$H_{n}\left(x,p\right)$ is continuous and bounded by $C_{2}\left(1+n^{2}\right)$,
there exists a classical  solution $v_{n}\in C^{2}\left(\Gamma\right)$
for the following boundary value problem 
\begin{equation}
\begin{cases}
-\mu_{\alpha}\partial^{2}v+H_{n}\left(x,\partial v\right)+\lambda v=0, & x\in\Gamma_{\alpha}\backslash\mathcal{V},\alpha\in A,\\
v|_{\Gamma_\alpha} (\nu_i) =v|_{\Gamma_\beta} (\nu_i),\quad &  \hbox{ for all } \alpha,\beta \in \cA_i, \; i\in I,
\\
{\displaystyle \sum_{\alpha\in\mathcal{A}_{i}}}\gamma_{i\alpha}\mu_{\alpha}\partial_{\alpha}v\left(\nu_{i}\right)=0, &  i\in I.
\end{cases}\label{eq: truncated}
\end{equation}
We wish to pass to the limit as $n$ tend to $+\infty$;  we first need to estimate
$v_{n}$  uniformly in $n$, successively in $L^{\infty}\left(\Gamma\right)$, $H^{1}\left(\Gamma\right)$
and $H^{2}\left(\Gamma\right)$.

\emph{Estimate in }\textbf{$L^{\infty}\left(\Gamma\right)$}. Since $\left|H_{n}\left(x,p\right)\right|\le c\left(1+\left|p\right|^{2}\right)$
for all $x,p$, then $\varphi=-c/\lambda$ and $\overline{\varphi}=c/\lambda$
are  respectively a sub- and super-solution of \eqref{eq: truncated}.
Therefore, from Lemma \ref{lem: maximum principle}, we obtain $|\lambda v_n|\le c$.

\emph{Estimate in}\textbf{ $V$}. For a positive constant $K$ to be chosen later, 
we introduce $w_{n}:=e^{Kv_{n}^{2}}v_{n}\psi \in W$,
where $\psi$ is given in  Definition \ref{def: test function}.
 Using $w_{n}$ as a test function in \eqref{eq: truncated} leads to
\[
\sum_{\alpha\in\mathcal{A}}\int_{\Gamma_{\alpha}}\left(\mu_{\alpha}\partial v_{n}\partial w_{n}+\lambda v_{n}w_{n}\right)dx=-\int_{\Gamma}H_{n}\left(x,\partial v_{n}\right)w_{n}dx.
\]
Since $\left|H_{n}\left(x,p\right)\right|\le c\left(1+p^{2}\right)$,
we have
\begin{align*}
 & \sum_{\alpha\in\mathcal{A}}\int_{\Gamma_{\alpha}}e^{Kv_{n}^{2}}\left[\left(\mu_{\alpha}\psi\right)\left(\partial v_{n}\right)^{2}+\left(\mu_{\alpha}2K\psi\right)v_{n}^{2}\left(\partial v_{n}\right)^{2}+\left(\mu_{\alpha}\partial\psi\right)v_{n}\partial v_{n}+\lambda \psi v_{n}^{2}\right]dx \\
\le & \int_{\Gamma}e^{Kv_{n}^{2}}\left|H_{n}\left(x,\partial v_{n}\right)\right|\left|v_{n}\psi\right|dx\\
\le & \int_{\Gamma}c e^{Kv_{n}^{2}}\psi\left|v_{n}\right| dx+\int_{\Gamma}c\psi e^{Kv_{n}^{2}}\left|v_{n}\right|\psi\left(\partial v_{n}\right)^{2}dx\\
\le & \int_{\Gamma}e^{Kv_{n}^{2}}\left( \lambda \psi v_{n}^2 +\psi\dfrac{c^2}{4\lambda}\right) dx+\sum_{\alpha\in\mathcal{A}}\int_{\Gamma_{\alpha}}e^{Kv_{n}^{2}}\left[\dfrac{\mu_{\alpha}}{2}\psi\left(\partial v_{n}\right)^{2}+\dfrac{c^2}{2\mu_{\alpha}}\psi\left(\partial v_{n}\right)^{2}v_{n}^{2}\right]dx,
\end{align*}
where we have used Young inequalities. Since $\lambda>0$ and $\psi>0$, we deduce that
\begin{equation}
\label{eq:19}
\begin{split}
 &\sum_{\alpha\in\mathcal{A}}\int_{\Gamma_{\alpha}}e^{Kv_{n}^{2}}\left[\left(\dfrac{\mu_{\alpha}}{2}\psi\right)
\left(\partial v_{n}\right)^{2}+2\psi\left(\mu_{\alpha}K-\dfrac{c^2}{4\mu_{\alpha}}\right)v_{n}^{2}\left(\partial v_{n}\right)^{2}+
\left(\mu_{\alpha}\partial\psi\right)v_{n}\partial v_{n}\right]dx \\ \le &\dfrac{c^2}{4\lambda} \int_{\Gamma}e^{Kv_{n}^{2}}\psi dx.  
\end{split}
\end{equation} 
Next, choosing $K>     (1+ c^2/ 4 \underline{\mu}) /\underline{\mu} $ yields that 
\begin{equation*}
 \sum_{\alpha\in\mathcal{A}}\int_{\Gamma_{\alpha}}e^{Kv_{n}^{2}}\left[   \dfrac{\mu_{\alpha}}{2}\psi
\left(\partial v_{n}\right)^{2}+      2\psi v_{n}^{2}\left(\partial v_{n}\right)^{2}+\left(\mu_{\alpha}\partial\psi\right)v_{n}\partial v_{n}\right]dx \le C
\end{equation*}
for a positive constant $C$ independent of $n$, because $v_{n}$ is bounded by $c/\lambda$.
Since $\psi $ is bounded from below by a positive number and  $\partial \psi$ is piecewise constant on $\Gamma$,
we  infer that \\
$\sum_{\alpha\in\mathcal{A}}\int_{\Gamma_{\alpha}}e^{Kv_{n}^{2}}v_{n}^{2}\left(\partial v_{n}\right)^{2} \le \widetilde C$,
where $\widetilde C$ is a positive constant independent on $n$.
Using this information and (\ref{eq:19})
 again, we obtain that $\int_{\Gamma}\left(\partial v_{n}\right)^{2}$ is bounded uniformly in $n$. There
exists a constant $\overline{C}$ such that $\left\Vert v_{n}\right\Vert _{V}\le\overline{C}$ for all $n$.

\emph{Estimate in}\textbf{ $H^{2}\left(\Gamma\right)$}. From the differential equation  in \eqref{eq: truncated} and \eqref{eq: bounded H}, we have
\[
\underline{\mu}\left|\partial^{2}v_{n}\right|\le c+c\left|\partial v_{n}\right|^{2}+\lambda\left|v_{n}\right|,\quad\text{for all \ensuremath{\alpha\in\mathcal{A}}}.
\]
Thus $ \partial^{2}v_{n} $ is uniformly bounded in
$L^{1}\left(\Gamma\right)$. This and the previous estimate on $\|\partial v_n\|_{L^2 (\Gamma)}$ yield that 
 $ \partial v_{n} $ is uniformly bounded in $L^{\infty}\left(\Gamma\right)$,
 from the continuous embedding of $W^{1,1}\left(0,\ell_{\alpha}\right)$ into $C\left(\left[0,\ell_{\alpha}\right]\right)$.
Therefore, from \eqref{eq: truncated}, we get that $ \partial^{2}v_{n} $
is uniformly bounded in $L^{\infty}\left(\Gamma\right)$. This implies in particular
that $ v_{n} $ is uniformly bounded in $W^{2,\infty}\left(\Gamma\right)$.

Hence, for any $\sigma \in (0,1)$, up to the extraction of a subsequence, there exists $v\in V$
such that $v_{n}\rightarrow v$ in $C^{1,\sigma}\left(\Gamma\right)$. This
yields that $H_{n}\left(x,\partial v_{n}\right)\rightarrow H\left(x,\partial v\right)$
for all $x\in\Gamma$. By Lebesgue's Dominated Convergence Theorem,
we obtain that $v$ is a weak solution of \eqref{eq:HJ}, and since $v\in C^{1,\sigma}(\Gamma)$, by Remark 
\ref{rem: regularity for HJ},  $v$ is a classical solution of \eqref{eq:HJ}.

Assume now that $H$ is locally Lipschitz.  We may use Remark \ref{sec:hamilt-jacobi-equat}
and obtain that $v\in C^{2,1}\left(\Gamma\right)$.
 The proof is complete.
\end{proof}

\subsection{The ergodic problem\label{subsec:Ergodic-problem}}

For $f\in PC\left(\Gamma\right)$,
we wish to prove the existence of  $\left(v,\rho\right)\in C^{2}\left(\Gamma\right)\times\mathbb{R}$
such that
\begin{equation}
\begin{cases}
-\mu_{\alpha}\partial^{2}v+H\left(x,\partial v\right)+\rho=f\left(x\right), & \text{in }\Gamma_{\alpha}\backslash\mathcal{V},\alpha\in\mathcal{A},\\
v|_{\Gamma_\alpha} (\nu_i) =v|_{\Gamma_\beta} (\nu_i),\quad &\alpha,\beta \in \cA_i, \;i\in I,\\
{\displaystyle \sum_{\alpha\in\mathcal{A}_{i}}\gamma_{i\alpha}\mu_{\alpha}\partial_{\alpha}v\left(\nu_{i}\right)=0}, & i\in I,
\end{cases}\label{eq: ergodic}
\end{equation}
with the normalization condition
\begin{equation}
\int_{\Gamma}vdx=0.\label{eq: normalization}
\end{equation}

\begin{thm}
\label{thm: existence ergodic}Assume \eqref{eq: Hamiltonian}-\eqref{eq: super quadratic}.
There exists a unique couple $\left(v,\rho\right)\in C^{2}\left(\Gamma\right)\times\mathbb{R}$
satisfying \eqref{eq: ergodic}-\eqref{eq: normalization}, with $\left|\rho\right|\le\max_{x\in\Gamma}\left|H\left(x,0\right)-f\left(x\right)\right|$. There exists a constant $\overline{C}$ which only depends upon $\left\Vert f\right\Vert _{L^{\infty}\left(\Gamma\right)},\mu_{\alpha}$
 and the constants in \eqref{eq: super quadratic}
such that
\begin{equation}
\left\Vert v\right\Vert _{C^{2}\left(\Gamma\right)}\le\overline{C}.\quad\label{eq: uni-bounded ergodic-1}
\end{equation}
Moreover, for some $\sigma\in\left(0,1\right)$, if $f_{\alpha}\in C^{0,\sigma}([0,\ell_{\alpha}])$  for all $\alpha\in\mathcal{A}$,
then $\left(v,\rho\right)\in C^{2,\sigma}\left(\Gamma\right)\times\mathbb{R}$; there exists a constant $\overline{C}$ which only depends upon $\left\Vert f_{\alpha}\right\Vert _{C^{0,\sigma}\left([0,\ell_{\alpha}]\right)},\mu_{\alpha}$
 and the constants in \eqref{eq: super quadratic} such that 
\begin{equation}
\left\Vert v\right\Vert _{C^{2.\sigma}\left(\Gamma\right)}\le\overline{C}.\label{eq: uni-bounded ergodic}
\end{equation}
\end{thm}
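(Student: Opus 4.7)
The plan is to use the vanishing discount method. For each $\lambda>0$, apply Proposition~\ref{prop: main} to the Hamiltonian $(x,p)\mapsto H(x,p)-f(x)$, which still satisfies \eqref{eq: continuity H}--\eqref{eq: bounded H} because \eqref{eq: derivative of H is bounded} with $q\le 2$ gives $|H(x,p)|\le C_q(1+|p|^q)\le C(1+|p|^2)$ and $f\in PC(\Gamma)$ is bounded. This produces a classical solution $v_\lambda\in C^2(\Gamma)$ of
\[
-\mu_\alpha\partial^2 v_\lambda+H(x,\partial v_\lambda)+\lambda v_\lambda=f(x)\quad\text{in }\Gamma_\alpha\setminus\mathcal V,
\]
together with the continuity and Kirchhoff conditions at the vertices. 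The constants $\pm\|H(\cdot,0)-f\|_{L^\infty(\Gamma)}/\lambda$ are classical super- and sub-solutions of this problem, so the comparison principle of Lemma~\ref{lem: maximum principle} yields the uniform bound $\|\lambda v_\lambda\|_{L^\infty(\Gamma)}\le \|H(\cdot,0)-f\|_{L^\infty(\Gamma)}$.

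\textbf{Step 2 (uniform $C^1$ estimate on the oscillation).} Set $c_\lambda:=\frac{1}{|\Gamma|}\int_\Gamma v_\lambda\,dx$ and $\tilde v_\lambda:=v_\lambda-c_\lambda$. The task is to bound $\tilde v_\lambda$ uniformly in $C^1(\Gamma)$. I would adapt the Boccardo--Murat--Puel test-function argument already used in the proof of Proposition~\ref{prop: main}: multiplying the equation by $e^{K\tilde v_\lambda^2}\tilde v_\lambda\psi\in W$, where $\psi$ is the weight of Definition~\ref{def: test function}, and exploiting the super-quadratic coercivity \eqref{eq: super quadratic} together with the bound $\|\lambda v_\lambda\|_\infty\le C$ from Step~1, absorb the $H(x,\partial v_\lambda)$ term and derive a uniform bound on $\sum_\alpha \int_{\Gamma_\alpha}|\partial v_\lambda|^q\psi\,dx$, hence on $\|\partial v_\lambda\|_{L^q(\Gamma)}$; using the equation to control $\partial^2 v_\lambda$ in $L^1$, and the embedding $W^{1,1}(0,\ell_\alpha)\hookrightarrow C([0,\ell_\alpha])$, obtain $\|\partial v_\lambda\|_{L^\infty(\Gamma)}\le C$; finally the Poincaré--Wirtinger inequality bounds $\|\tilde v_\lambda\|_{L^\infty}$, and bootstrapping yields a uniform $C^2(\Gamma)$ bound. \emph{This is the main obstacle}, because the Kirchhoff transmission conditions preclude a direct Bernstein argument on $|\partial v_\lambda|^2$; the weight $\psi$ is essential for making the vertex contributions harmless.

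\textbf{Step 3 (passage to the limit).} Extract a subsequence $\lambda_n\to 0$ along which $\lambda_n v_{\lambda_n}$ converges uniformly on $\Gamma$ to some constant $-\rho$ (the uniform Lipschitz bound from Step~2 ensures the oscillation of $\lambda_n v_{\lambda_n}$ tends to $0$), and $\tilde v_{\lambda_n}\to v$ in $C^{1,\sigma}(\Gamma)$ for some $\sigma\in(0,1)$ by Arzelà--Ascoli. Passing to the limit in the PDE and in the Kirchhoff conditions gives a classical solution $(v,\rho)\in C^2(\Gamma)\times\R$ of \eqref{eq: ergodic}, and since each $\tilde v_\lambda$ has zero mean, so does $v$. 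The bound $|\rho|\le\|H(\cdot,0)-f\|_\infty$ then follows either by integrating the equation against $m$, the invariant measure from Theorem~\ref{thm: existence and uniqueness FK}, or by evaluating at an extremum of $v$ as in the proof of Lemma~\ref{lem: maximum principle}. The estimate \eqref{eq: uni-bounded ergodic-1} is exactly the uniform $C^2$ bound produced in Step~2.

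\textbf{Step 4 (uniqueness and $C^{2,\sigma}$ regularity).} If $(v_1,\rho_1)$ and $(v_2,\rho_2)$ both solve \eqref{eq: ergodic}--\eqref{eq: normalization} with $\rho_1<\rho_2$, then $v_2+\kappa$ is, for any $\kappa\in\R$, a strict super-solution of the equation satisfied by $v_1$ after shifting by $(\rho_2-\rho_1)/\lambda$-type corrections; applying the comparison argument of Lemma~\ref{lem: maximum principle} to the problems with an added $\lambda=0$ reduction (or, equivalently, comparing $v_1-v_2$ to large constants and using the vertex analysis of Lemma~\ref{lem: maximum principle}) yields a contradiction, so $\rho_1=\rho_2$. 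Then $w:=v_1-v_2$ solves a linear problem whose maximum principle (again via Lemma~\ref{lem: maximum principle} applied to both $w$ and $-w$, using convexity of $H$ in $p$) forces $w$ to be constant, and the zero-mean normalization gives $w\equiv 0$. Finally, when $f_\alpha\in C^{0,\sigma}([0,\ell_\alpha])$, Remark~\ref{sec:hamilt-jacobi-equat} applied edge by edge shows that $\partial v_\alpha\in\mathrm{Lip}$ and $x\mapsto H_\alpha(x,\partial v_\alpha(x))+\rho-f_\alpha(x)\in C^{0,\sigma}([0,\ell_\alpha])$; the ODE $\mu_\alpha\partial^2 v_\alpha=H_\alpha(\cdot,\partial v_\alpha)+\rho-f_\alpha$ then yields $v\in C^{2,\sigma}(\Gamma)$ with the quantitative estimate \eqref{eq: uni-bounded ergodic} by standard Schauder-type bounds on each edge.
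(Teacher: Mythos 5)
Your overall architecture is the same as the paper's: vanishing discount via Proposition~\ref{prop: main}, constant barriers and Lemma~\ref{lem: maximum principle} to get $\|\lambda v_\lambda\|_{L^\infty(\Gamma)}\le\max_\Gamma|H(\cdot,0)-f|$, a $\lambda$-uniform gradient estimate upgraded to $C^2(\Gamma)$ through the equation and $W^{1,1}(0,\ell_\alpha)\hookrightarrow C([0,\ell_\alpha])$, Arzel\`a--Ascoli, and uniqueness by first comparing the ergodic constants at a maximum of the difference and then feeding $v_1-v_2$ into the linearized Kolmogorov problem handled by Lemma~\ref{lem: linear 2}. The step that does not go through as written is your Step~2. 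The Boccardo--Murat--Puel test function $e^{K\tilde v_\lambda^2}\tilde v_\lambda\psi$ is the right tool in Proposition~\ref{prop: main}, where $\lambda>0$ is \emph{fixed}: there the argument closes because $\|v_\lambda\|_\infty\le c/\lambda$ is known a priori, so $e^{Kv_\lambda^2}$ is a ($\lambda$-dependent) constant, and the Young step producing the factor $c^2/(4\lambda)$ is harmless. Both of these degenerate as $\lambda\to0$. If instead you insert the mean-normalized $\tilde v_\lambda$ into the exponential, you need an a priori bound on $\|\tilde v_\lambda\|_\infty$, which by Poincar\'e--Wirtinger requires exactly the gradient bound you are trying to prove; the inequality you obtain has the shape $X^2\le C\,e^{KX^2}(1+X)^2$ with $X=\|\partial\tilde v_\lambda\|_{L^2}$ and does not close. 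Moreover the coercivity \eqref{eq: super quadratic} cannot be ``exploited'' against a sign-changing test function: on the set $\{\tilde v_\lambda<0\}$ the term $H(x,\partial v_\lambda)\,\tilde v_\lambda$ has the wrong sign for absorbing $|\partial v_\lambda|^q$.

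The fix --- and the paper's actual argument --- is that under \eqref{eq: super quadratic} no exponential weight is needed at all: test the equation for $u_\lambda:=v_\lambda-\min_\Gamma v_\lambda$ with the fixed \emph{positive} weight $\psi$ of Definition~\ref{def: test function} alone. This gives
$\sum_\alpha\int_{\Gamma_\alpha}\mu_\alpha\partial u_\lambda\,\partial\psi\,dx+\int_\Gamma H(x,\partial u_\lambda)\psi\,dx=\int_\Gamma(f-\lambda v_\lambda)\psi\,dx$,
and then $H\ge C_0|p|^q-C_1$ together with the pointwise Young inequality $|\mu_\alpha\partial u_\lambda\,\partial\psi|\le\tfrac{C_0}{2}|\partial u_\lambda|^q\psi+C'$ (valid since $q>1$, $\psi\ge\underline{\psi}>0$ and $\partial\psi$ is bounded) yields $\|\partial u_\lambda\|_{L^q(\Gamma)}\le\widetilde C$ uniformly in $\lambda$; the $L^\infty$ bound on $u_\lambda$ then follows by integrating $\partial u_\lambda$ along the connected network from the point where $u_\lambda$ vanishes (or by Poincar\'e--Wirtinger in your normalization). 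With this replacement, your Steps 3--4 are sound and essentially identical to the paper's; two small remarks: the bound $|\rho|\le\max_\Gamma|H(\cdot,0)-f|$ is read off directly from $\rho=\lim_{\lambda\to0}\lambda\min_\Gamma v_\lambda$ and the Step~1 estimate, with no need to integrate against the invariant measure, and in the uniqueness of $v$ the paper writes $H(x,\partial v_1)-H(x,\partial v_2)=\bigl(\int_0^1\partial_pH(x,\theta\partial v_1+(1-\theta)\partial v_2)\,d\theta\bigr)\partial(v_1-v_2)$ using only $H\in C^1$ and invokes Lemma~\ref{lem: linear 2}, rather than convexity or Lemma~\ref{lem: maximum principle} (which requires $\lambda>0$).
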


\begin{proof}
[Proof of existence in Theorem \ref{thm: existence ergodic}]
By Proposition \ref{prop: main},
for any $\lambda>0$, the following boundary value problem
\begin{equation}
\begin{cases}
-\mu_{\alpha}\partial^{2}v+H\left(x,\partial v\right)+\lambda v=f, & \text{in }\Gamma_{\alpha}\backslash\mathcal{V},\alpha\in\mathcal{A},\\
v|_{\Gamma_\alpha} (\nu_i) =v|_{\Gamma_\beta} (\nu_i),\quad &\alpha,\beta \in \cA_i, \;i\in I,\\
{\displaystyle \sum_{\alpha\in\mathcal{A}_{i}}\gamma_{i\alpha}\mu_{\alpha}\partial_{\alpha}v\left(\nu_{i}\right)=0}, & i\in I,
\end{cases}\label{eq: ergodic 2}
\end{equation}
has a unique solution $v_{\lambda}\in C^{2}\left(\Gamma\right)$.
Set $C:=\max_{\Gamma}\left|f\left(\cdot\right)-H\left(\cdot,0\right)\right|$.
The constant functions $\varphi:=- C/\lambda$ and $\overline{\varphi}=C/\lambda$
are respectively sub- and super-solution of \eqref{eq: ergodic 2}.
By Lemma \ref{lem: maximum principle},
\begin{equation}
-C\le\lambda v_{\lambda}\left(x\right)\le C,\quad\text{for all }x\in\Gamma.\label{ineq: v is bounded in L_infty}
\end{equation}
Next, set $u_{\lambda}:=v_{\lambda}-\min_{\Gamma}v_{\lambda}$. We see that
$u_{\lambda}$  is the unique classical
solution of
\begin{equation}
\begin{cases}
-\mu_{\alpha}\partial^{2}u_{\lambda}+H\left(x,\partial u_{\lambda}\right)+\lambda u_{\lambda}+\lambda\min_{\Gamma}v_{\lambda}=f, & \text{in }\Gamma_{\alpha}\backslash\mathcal{V},\alpha\in\mathcal{A},\\
u|_{\Gamma_\alpha} (\nu_i) =u|_{\Gamma_\beta} (\nu_i),\quad &\alpha,\beta \in \cA_i, \;i\in I,\\
{\displaystyle \sum_{\alpha\in\mathcal{A}_{i}}\gamma_{i\alpha}\mu_{\alpha}\partial_{\alpha}u_{\lambda}\left(\nu_{i}\right)=0}, & i\in I.
\end{cases}\label{eq: u_lambda}
\end{equation}
Before passing to the limit as $\lambda$ tends $0$,  we need to estimate
$u_{\lambda}$ in $C^{2}\left(\Gamma\right)$ uniformly with respect to $\lambda$. We do this in two steps:

\emph{Step 1: Estimate of $\|\partial u_{\lambda}\|_{L^{q}\left(\Gamma\right)}$}. Using $\psi$ as a test-function in \eqref{eq: u_lambda}, see Definition \ref{def: test function}, and  recalling that  $\lambda u_{\lambda}+\lambda\min_{\Gamma}v_{\lambda}=\lambda v_{\lambda}$,
we see that
\[
\sum_{\alpha\in\mathcal{A}}\int_{\Gamma_{\alpha}}\mu_{\alpha}\partial u_{\lambda}\partial\psi dx+\int_{\Gamma}\left(H\left(x,\partial u_{\lambda}\right)+\lambda v_{\lambda}\right)\psi dx=\int_{\Gamma}f\psi dx.
\]
From \eqref{eq: super quadratic} and \eqref{ineq: v is bounded in L_infty},
\[
\sum_{\alpha\in\mathcal{A}}\int_{\Gamma_{\alpha}}\mu_{\alpha}\partial u_{\lambda}\partial\psi dx+\sum_{\alpha\in\mathcal{A}}\int_{\Gamma_{\alpha}}C_0\left|\partial u_{\lambda}\right|^{q}\psi dx\le\int_{\Gamma}\left(f+C+C_1\right)\psi dx.
\]
On the other hand, since $q>1$, $\psi\ge\underline{\psi}>0$ and $\partial \psi$ is bounded, 
there exists a large enough positive constant $C'$  such that
\[
\sum_{\alpha\in\mathcal{A}}\int_{\Gamma_{\alpha}}\mu_{\alpha}\partial u_{\lambda}\partial\psi dx+\dfrac{1}{2}\sum_{\alpha\in\mathcal{A}}\int_{\Gamma_{\alpha}}C_0\left|\partial u_{\lambda}\right|^{q}\psi dx+C'>0,\quad\text{for all \ensuremath{\lambda>0}}.
\]
Subtracting, we get $
\dfrac{C_0}{2}\underline{\psi}\int_{\Gamma}\left|\partial u_{\lambda}\right|^{q}dx\le\int_{\Gamma}\left(f+C+C_1\right)\psi dx+C'$.
Hence, for all $\lambda>0$, 
\begin{equation}
\left\Vert \partial u_{\lambda}\right\Vert _{L^{q}\left(\Gamma\right)}\le \widetilde {C},\label{ineq: bounded by C1}
\end{equation}
where $\widetilde {C}:=\left[\left(2\int_{\Gamma}\left(|f|+C+C_1\right)\psi dx+2C'\right)/(C_0 \underline{\psi})\right]^{1/q}$.

\emph{Step 2: Estimate of $\|u_{\lambda}\|_{C^{2}\left(\Gamma\right)}$}. Since $u_{\lambda}=v_{\lambda}-\min_{\Gamma}v_{\lambda}$, there exists $\alpha\in \cA$ and 
$x_{\lambda}\in\Gamma_{\alpha}$ such that $u_{\lambda}\left(x_{\lambda}\right)=0$.
For all $\lambda>0$ and $x\in\Gamma_{\alpha}$, we have
\[
\left|u_{\lambda}\left(x\right)\right|=\left|u_{\lambda}\left(x\right)-u_{\lambda}\left(x_{\lambda}\right)\right|\le\int_{\Gamma}\left|\partial u_{\lambda}\right|dx\le\left\Vert \partial u_{\lambda}\right\Vert _{L^{q}\left(\Gamma\right)}\left|\Gamma\right|^{q/\left(q-1\right)}.
\]
From \eqref{ineq: bounded by C1} and the latter inequality, we deduce that
$
\left\Vert u_{\lambda}|_{\Gamma_{\alpha}}\right\Vert _{L^\infty\left(\Gamma_{\alpha}\right)}\le \widetilde{C}\left|\Gamma\right|^{q/\left(q-1\right)}$.
Let $\nu_{i}$ be a transition vertex which belongs to $\partial\Gamma_{\alpha}$.
 For all $\beta\in\mathcal{A}_{i}$,  $y\in\Gamma_{\beta}$,
\[
\left|u_{\lambda}\left(y\right)\right|\le\left|u_{\lambda}\left(y\right)-u_{\lambda}\left(\nu_{i}\right)\right|+\left|u_{\lambda}\left(\nu_{i}\right)\right|\le2\widetilde{C}\left|\Gamma\right|^{q/\left(q-1\right)}.
\]
Since  the network is connected and the number of edges is finite, 
repeating the argument as many times as necessary, we obtain that
there exists $M\in\mathbb{N}$ such that
\[
\left\Vert u_{\lambda}\right\Vert _{L^{\infty}\left(\Gamma\right)}\le M\widetilde{C}\left|\Gamma\right|^{q/\left(q-1\right)}.
\]
This bound is uniform with respect to $\lambda\in (0,1]$.
Next, from \eqref{eq: u_lambda} and (\ref{eq: sup quadractic}),
we get
\[
\underline{\mu}\left|\partial^{2}u_{\lambda}\right|\le\left|H\left(x,\partial u_{\lambda}\right)\right|+\left|\lambda v_{\lambda}\right|+\left|f\right|\le C_{q}\left(1+\left|\partial u_{\lambda}\right|^{q}\right)+C+\left\Vert f\right\Vert _{L^{\infty}\left(\Gamma\right)}.
\]
Hence, from \eqref{ineq: bounded by C1}, $\partial^{2}u_{\lambda} $
is bounded in $L^{1}\left(\Gamma\right)$ uniformly with respect to
$\lambda\in (0,1]$. From the continuous embedding of  $W^{1,1}\left(0,\ell_{\alpha}\right)$ in $C([0,\ell_{\alpha}])$, we infer
that  $\partial u_{\lambda}|_{\Gamma_\alpha} $ is bounded in $C(\Gamma_\alpha)$ uniformly with respect to $\lambda\in (0,1]$. From the equation (\ref{eq: u_lambda}) and (\ref{ineq: v is bounded in L_infty}), this implies that   $u_{\lambda}$  is bounded  in  $C^{2}\left(\Gamma\right)$
 uniformly with respect to $\lambda\in (0,1]$.

After the extraction of a subsequence, we may assume that when $\lambda\rightarrow0^{+}$,
the sequence $ u_{\lambda} $ converges to some function
$v\in C^{1,1}\left(\Gamma\right)$  and that $ \lambda\min v_{\lambda} $
converges to some constant $\rho$. Notice that $v$ still satisfies the Kirchhoff conditions 
since $\partial u_{\lambda}|_{\Gamma_{\alpha}}\left(\nu_{i}\right)\rightarrow\partial v|_{\Gamma_{\alpha}}\left(\nu_{i}\right)$
as $\lambda\rightarrow0^{+}$. 
 Passing to the limit in \eqref{eq: u_lambda}, we get that the couple $\left(v,\rho\right)$
satisfies \eqref{eq: ergodic} in the weak sense, then in the classical sense by using an argument similar to Remark~\ref{rem: regularity for HJ}. Adding a constant to $v$, we also get \eqref{eq: normalization}.

Furthermore,  if  for some $\sigma\in (0,1)$, $f|_{\Gamma_{\alpha}}\in C^{0,\sigma}\left(\Gamma_{\alpha}\right)$
for all $\alpha\in\mathcal{A}$, a bootstrap argument using the Lipschitz continuity of $H$ on the bounded subsets
 of $\Gamma\times \R$ shows   that $u_{\lambda}$  is bounded  in  $C^{2,\sigma}\left(\Gamma\right)$
 uniformly with respect to $\lambda\in (0,1]$. After a further extraction of a subsequence if necessary,
 we obtain  (\ref{eq: uni-bounded ergodic}).
\end{proof}
\begin{proof}[Proof of uniqueness in Theorem \ref{thm: existence ergodic}] Assume that there exist
two solutions $\left(v,\rho\right)$ and $\left(\tilde{v},\tilde{\rho}\right)$
of \eqref{eq: ergodic}-\eqref{eq: normalization}. First of all, we claim that $\rho=\tilde{\rho}$. By symmetry,  it suffices to prove that
$\rho \ge\tilde{\rho}$. Let $x_{0}$ be a maximum point of $e:=\tilde{v}-v$. Using similar arguments as in the proof of Lemma \ref{lem: maximum principle}, with
$\lambda v$ and $\lambda u $  respectively replaced by $\rho$ and $\tilde{\rho}$, we get 
$\rho\ge\tilde{\rho}$ and the claim is proved.

We now prove the uniqueness
of $v$. Since $H_{\alpha}$ belongs to $C^{1}\left(\Gamma_{\alpha}\times\mathbb{R}\right)$
for all $\alpha\in\mathcal{A}$, then $e$ is a solution of 
$
\mu_{\alpha}\partial^{2}e_{\alpha}-\left[\int_{0}^{1} \partial_p H_{\alpha}\left(y,\theta\partial v_{\alpha}+\left(1-\theta\right)\partial \tilde{v}_{\alpha}\right)d\theta\right]\partial e_{\alpha}=0,\quad\text{in }(0,\ell_{\alpha})$,
with the same transmission and boundary condition as in \eqref{eq: ergodic}.
By Lemma \ref{lem: linear 2}, $e$ is a constant function on $\Gamma$.
Moreover, from  (\ref{eq: normalization}) , we know that $\int_{\Gamma}edx=0$.
This yields that $e=0$ on $\Gamma$. Hence, \eqref{eq: ergodic}-\eqref{eq: normalization}
has a unique solution.  
\end{proof} 
\begin{rem}
Since there exists a unique solution of \eqref{eq: ergodic}-\eqref{eq: normalization},
we conclude that the whole sequence $\left(u_{\lambda},\lambda v_{\lambda}\right)$
in the proof of Theorem \ref{thm: existence ergodic} converges to
$\left(v,\rho\right)$ as $\lambda\rightarrow0$.
\end{rem}

\section{Proof of the main result}

We first prove Theorem~\ref{thm: MFG system} when $F$ is bounded.
\begin{thm}
\label{thm: main result with F bounded} Assume \eqref{eq: Hamiltonian}-(\ref{eq: derivative of H is bounded}), \eqref{eq: coupling F}
and that $F$ is bounded. There exists a  solution
$\left(v,m,\rho\right)\in C^{2}\left(\Gamma\right)\times\mathcal{W}\times\mathbb{R}$
to the mean field games system \eqref{eq: MFG system}. 
If $F$ is locally Lipschitz continuous, then $v\in C^{2,1}(\Gamma)$.
If furthermore $F$ is strictly increasing, then the solution is unique.
\end{thm}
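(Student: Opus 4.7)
The plan is to prove existence by a Schauder fixed-point argument, then upgrade regularity, and finally establish uniqueness via the classical Lasry-Lions monotonicity trick adapted to the network setting.

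For existence, I will define a map $T$ on a suitable convex subset $K\subset L^2(\Gamma)$, such as $K=\{m\in L^2(\Gamma):m\ge 0,\ \int_\Gamma m\,dx=1,\ \|m\|_{L^\infty(\Gamma)}\le C\}$ for a constant $C$ to be chosen. Given $\overline{m}\in K$, the bounded function $f=F(\overline{m})\in L^\infty(\Gamma)\subset PC(\Gamma)$ is well-defined since $F$ is continuous and bounded. Applying Theorem~\ref{thm: existence ergodic} to $f$ yields a unique pair $(v_{\overline{m}},\rho_{\overline{m}})\in C^2(\Gamma)\times\R$ with $\|v_{\overline m}\|_{C^2(\Gamma)}\le \overline{C}$ depending only on $\|F\|_\infty$, $\mu_\alpha$, and the structural constants in (\ref{eq: super quadratic}). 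I then set $b(x)=-\partial_p H(x,\partial v_{\overline{m}}(x))$, which belongs to $PC(\Gamma)$ and is uniformly bounded by (\ref{eq: derivative of H is bounded}) and the uniform bound on $\partial v_{\overline{m}}$. Theorem~\ref{thm: existence and uniqueness FK} produces a unique $\widehat{m}\in\mathcal{W}$ solving (\ref{eq: Fokker-Planck})-(\ref{eq: normalization and positive}) with this drift, and satisfying $0\le\widehat{m}\le C$, $\|\widehat{m}\|_W\le C$ for a constant depending only on $\|b\|_\infty$ and $\mu_\alpha$. Thus setting $T(\overline{m})=\widehat{m}$ yields a map $T:K\to K$ (after choosing $C$ to match the a priori bound above).

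The compactness and continuity of $T$ come from standard compact embeddings: since $T(K)$ is bounded in $W\hookrightarrow L^2(\Gamma)$ compactly, $T(K)$ is relatively compact in $L^2(\Gamma)$. For continuity, if $\overline{m}_n\to\overline{m}$ in $L^2(\Gamma)$, then $F(\overline{m}_n)\to F(\overline{m})$ pointwise a.e. (up to extraction) and boundedly, so by the stability of the ergodic problem (uniqueness in Theorem~\ref{thm: existence ergodic} together with the uniform $C^2$ bound) we get $v_{\overline{m}_n}\to v_{\overline{m}}$ in $C^{1,\sigma}(\Gamma)$, hence $b_n\to b$ uniformly, and finally $\widehat{m}_n\to\widehat{m}$ in $L^2(\Gamma)$ by a similar stability argument for the Fokker-Planck equation (the whole sequence converges thanks to uniqueness in Theorem~\ref{thm: existence and uniqueness FK}). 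Schauder's theorem then produces a fixed point $(v,m,\rho)$ solving (\ref{eq: MFG system}).

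For the improved regularity when $F$ is locally Lipschitz, observe that $m\in\mathcal{W}$ has each $m_\alpha\in C^1([0,\ell_\alpha])$, so $F(m)|_{\Gamma_\alpha}\in C^{0,1}([0,\ell_\alpha])$. The second part of Theorem~\ref{thm: existence ergodic} then gives $v\in C^{2,\sigma}(\Gamma)$, whence $\partial_p H(\cdot,\partial v)$ is Lipschitz on each edge; Remark~\ref{sec:hamilt-jacobi-equat} (or a direct bootstrap using Remark~\ref{rem: smooth solution FK} plugged back into the HJB equation) then yields $v\in C^{2,1}(\Gamma)$.

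The main obstacle, and the most delicate part, is uniqueness under strict monotonicity of $F$. Let $(v_1,m_1,\rho_1)$ and $(v_2,m_2,\rho_2)$ be two solutions, and set $e=v_1-v_2$, $\widetilde{m}=m_1-m_2$. I will subtract the HJB equations, multiply by $\widetilde{m}$, subtract the FP equations, multiply by $e$, sum, and integrate over $\Gamma$. The key computation is that all vertex terms cancel: the Kirchhoff condition $\sum_{\alpha\in\cA_i}\gamma_{i\alpha}\mu_\alpha\partial_\alpha e(\nu_i)=0$ combined with the fact that $m_j/\gamma_{i\alpha}$ takes a common value $\mu_i(m_j)$ at $\nu_i$ means $\sum_\alpha \mu_\alpha\partial_\alpha e(\nu_i) m_j|_{\Gamma_\alpha}(\nu_i)=\mu_i(m_j)\sum_\alpha\gamma_{i\alpha}\mu_\alpha\partial_\alpha e(\nu_i)=0$, and similarly for the FP boundary terms when tested against $e$ (continuous at vertices) using the transmission condition in (\ref{eq:15}). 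After these cancellations, the bulk gives
\begin{equation*}
\int_\Gamma (F(m_1)-F(m_2))(m_1-m_2)\,dx + \sum_\alpha\int_{\Gamma_\alpha}\Bigl[H(x,\partial v_2)-H(x,\partial v_1)-\partial_p H(x,\partial v_1)\,\partial e\Bigr]m_1\,dx + \sum_\alpha\int_{\Gamma_\alpha}\Bigl[H(x,\partial v_1)-H(x,\partial v_2)+\partial_p H(x,\partial v_2)\,\partial e\Bigr]m_2\,dx = 0.
\end{equation*}
By convexity of $H$ in $p$ both bracketed quantities are nonnegative, and $m_1,m_2>0$ by Theorem~\ref{thm: existence and uniqueness FK}; strict monotonicity of $F$ forces $m_1=m_2$, hence the drifts coincide, and finally Lemma~\ref{lem: linear 2} together with the normalization $\int_\Gamma v=0$ forces $v_1=v_2$ and $\rho_1=\rho_2$. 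The delicate point is to verify rigorously the vertex cancellations using the spaces $V$ and $W$ and the dual jump conditions on $m$.
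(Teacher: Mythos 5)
Your overall architecture is the paper's: a Schauder fixed point alternating the ergodic HJB problem (Theorem \ref{thm: existence ergodic}) and the Fokker--Planck problem (Theorem \ref{thm: existence and uniqueness FK}), followed by the Lasry--Lions monotonicity argument for uniqueness. However, there are three concrete problems with the proposal as written.

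First, your choice of ambient set $K\subset L^2(\Gamma)$ breaks the well-definedness of $T$: the claimed inclusion $L^\infty(\Gamma)\subset PC(\Gamma)$ is false ($PC(\Gamma)$ requires continuity on each edge with continuous extension to the closed edge), and Theorem \ref{thm: existence ergodic} is stated and proved only for $f\in PC(\Gamma)$; for a general $\overline m\in L^2\cap L^\infty$ the datum $F(\overline m)$ is merely $L^\infty$ and the theorem does not apply. This is precisely why the paper works instead in the space $\mathcal{M}_\sigma$ of edgewise $C^{0,\sigma}$ functions satisfying the jump conditions, so that $F(m)\in PC(\Gamma)$ automatically and the compact embedding $W\hookrightarrow\mathcal{M}_\sigma$ supplies the precompactness of $T(\mathcal{K})$. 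Your argument can be repaired by making the same choice, but as written the first step of the fixed-point construction fails.

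Second, two sign errors, each fatal to the written argument though trivially repairable. (i) Matching the MFG Fokker--Planck equation $\mu_\alpha\partial^2 m+\partial\bigl(m\,\partial_p H(x,\partial v)\bigr)=0$ with the form $-\mu_\alpha\partial^2 m-\partial(bm)=0$ of \eqref{eq: Fokker-Planck} requires $b=+\partial_p H(x,\partial v)$, not $b=-\partial_p H(x,\partial v)$; with your sign the fixed point solves the wrong system. (ii) In the uniqueness identity, with $e=v_1-v_2$ the correct brackets are $H(x,\partial v_2)-H(x,\partial v_1)+\partial_p H(x,\partial v_1)\,\partial e$ (against $m_1$) and $H(x,\partial v_1)-H(x,\partial v_2)-\partial_p H(x,\partial v_2)\,\partial e$ (against $m_2$); these are the quantities that are nonnegative by convexity of $H(x,\cdot)$. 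The brackets you display carry the opposite signs on the $\partial_p H\cdot\partial e$ terms, so they are neither the result of the subtraction nor nonnegative, and the monotonicity argument does not close as written. The vertex cancellations you describe (using the Kirchhoff condition for $e$ together with the common value of $m_j/\gamma_{i\alpha}$ at each vertex, and the continuity of $e$ together with the flux condition for $\overline m$) are correct and are exactly what the paper does.
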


\begin{proof}
[Proof of existence in Theorem \ref{thm: main result with F bounded}]We
adapt the proof of Camilli and Marchi in \cite[Theorem 1]{CM2016}.
For $\sigma\in (0,1/2)$ let us introduce the space
\[
\mathcal{M}_{\sigma}=\left\{ m:m_{\alpha}\in C^{0,\sigma}\left(\left[0,\ell_{\alpha}\right]\right)\text{  and }\dfrac{m|_{\Gamma_{\alpha}}\left(\nu_{i}\right)}{\gamma_{i\alpha}}=\dfrac{m|_{\Gamma_{\beta}}\left(\nu_{i}\right)}{\gamma_{i\beta}}\text{ for all \ensuremath{i\in I} and \ensuremath{\alpha,\beta\in\mathcal{A}_{i}}}\right\} 
\]
which, endowed with the norm $ {\displaystyle
\left\Vert m\right\Vert _{\mathcal{M}_{\sigma}}=\left\Vert m\right\Vert _{L^{\infty}\left(\Gamma\right)}+\max_{\alpha\in\mathcal{A}}\sup_{y,z\in\left[0,\ell_{\alpha}\right],y\ne z}\dfrac{\left|m_{\alpha}\left(y\right)-m_{\alpha}\left(z\right)\right|}{\left|y-z\right|^{\sigma}}}$,
is a Banach space. Now consider the set
\[
\mathcal{K}=\left\{ m\in\mathcal{M}_{\sigma}:m\ge0\text{ and }\int_{\Gamma}mdx=1\right\} 
\]
and observe that $\mathcal{K}$ is a closed and convex subset of $\mathcal{M}_{\sigma}$. We define a map $T:\mathcal{K}\rightarrow\mathcal{K}$
as follows: given $m\in\mathcal{K}$, set $f=F\left(m\right)$. By
Theorem \ref{thm: existence ergodic},
\eqref{eq: ergodic}-\eqref{eq: normalization} has a unique solution
$\left(v,\rho\right)\in C^{2}\left(\Gamma\right)\times\mathbb{R}$.
Next, for $v$ given, we solve \eqref{eq: Fokker-Planck}-\eqref{eq: normalization and positive}
with $b\left(\cdot\right)=\partial_{p}H\left(\cdot,\partial v\left(\cdot\right)\right)\in PC(\Gamma)$.
By Theorem \ref{thm: existence and uniqueness FK}, there exists a
unique solution $\overline{m}\in\mathcal{K}\cap W$ of \eqref{eq: Fokker-Planck}-\eqref{eq: normalization and positive}.
We set $T\left(m\right)=\overline{m}$; we claim that $T$ is 
continuous and has a precompact image. We proceed in several steps:

\emph{$T$ is continuous}. Let $m_{n},m\in\mathcal{K}$ be such that $\left\Vert m_{n}-m\right\Vert _{\mathcal{M}_{\sigma}}\rightarrow0$
as $n\rightarrow+\infty$; set $\overline{m}_{n}=T\left(m_{n}\right),\overline{m}=T\left(m\right)$.
We need to prove that $\overline{m}_{n}\rightarrow\overline{m}$ in
$\mathcal{M}_{\sigma}$. Let $\left(v_{n},\rho_{n}\right),\left(v,\rho\right)$
be the solutions of \eqref{eq: ergodic}-\eqref{eq: normalization}
corresponding respectively to $f=F\left(m_{n}\right)$ and $f=F\left(m\right)$. Using estimate (\ref{eq: uni-bounded ergodic-1}), we see that up
to the extraction of a subsequence, we may assume that $\left(v_{n},\rho_{n}\right)\rightarrow\left(\overline{v},\overline{\rho}\right)$
in $C^{1}\left(\Gamma\right)\times\mathbb{R}$.  Since 
$F\left(m_{n}\right)|_{\Gamma_\alpha}\rightarrow F\left(m\right)|_{\Gamma_\alpha}$
in $C\left(\Gamma_\alpha\right)$, $H_{\alpha}\left(y,(\partial v_{n})_{\alpha}\right)\rightarrow H_{\alpha}\left(y,\partial\overline{v}_{\alpha}\right)$
in $C\left([0,\ell_{\alpha}]\right)$,
and since it is possible to pass to the limit in the transmission and boundary conditions thanks to the $C^{1}$-convergence, 
we obtain that $\left(\overline{v},\overline{\rho}\right)$ is a weak (and strong by Remark \ref{rem: regularity for HJ}) 
solution of \eqref{eq: ergodic}-\eqref{eq: normalization}.
By uniqueness, $(\overline{v},\overline{\rho})=(v,\rho)$ and the whole sequence $\left(v_{n},\rho_{n}\right)$ converges.

Next, $\overline{m}_{n}=T\left(m_{n}\right),\overline{m}=T\left(m\right)$
are respectively the solutions
of \eqref{eq: Fokker-Planck}-\eqref{eq: normalization and positive}
corresponding to $b=\partial_{p}H\left(x,\partial v_{n}\right)$ and
$b=\partial_{p}H\left(x,\partial v\right)$. From
the estimate \eqref{eq: estimate FK}, since $\partial_{p}H\left(x,\partial v_{n}\right) $
is uniformly bounded in $L^{\infty}\left(\Gamma\right)$, we see
that $\overline{m}_{n}$ is uniformly bounded in
$W$. Therefore, up to the extraction of subsequence, $\overline{m}_{n}\rightharpoonup\widehat{m}$ in $W$ and $\overline{m}_{n}\rightarrow\widehat{m}$ in $\mathcal{M}_\sigma$,
because $W$ is compactly embedded in $\mathcal{M}_\sigma$ for $\sigma\in (0,1/2)$. It is easy to pass to the limit and find that  $\widehat{m}$ is a solution
of \eqref{eq: Fokker-Planck}-\eqref{eq: normalization and positive} with $b=\partial_{p}H\left(x,\partial v\right)$.
From Theorem \ref{thm: existence and uniqueness FK},
we obtain that $\overline{m}=\widehat{m}$, and hence the whole sequence $\overline{m}_{n} $
converges to $\overline{m}$.

\emph{The image of $T$ is precompact}.  Since  $F\in C^{0}\left(\mathbb{R}^{+};\mathbb{R}\right)$ is a uniformly bounded function,
we see that  $F\left(m\right)$ is bounded in $L^{\infty}\left(\Gamma\right)$
uniformly with respect to $m\in \mathcal{K}$. From Theorem \ref{thm: existence ergodic},
there exists a constant $\overline{C}$ such that for all $m\in\mathcal{K}$, 
the unique solution $v$ of \eqref{eq: ergodic}-\eqref{eq: normalization} with $f=F(m)$ satisfies 
$\left\Vert v\right\Vert _{C^{2}\left(\Gamma\right)}\le\overline{C}$.
From Theorem \ref{thm: existence and uniqueness FK}, we obtain that
$\overline{m}=T\left(m\right)$ is bounded in $W$ by a constant independent
of $m$. Since $W$ is compactly embedded in $\mathcal{M}_\sigma$, for $\sigma\in (0,1/2)$ we deduce
that $T$ has a precompact image.


\emph{End of the proof}. We can apply  Schauder
fixed point theorem (see \cite[Corollary 11.2]{GT2001}) to conclude  that the
map $T$ admits a fixed point $m$. By Theorem \ref{thm: existence and uniqueness FK}, we get $m\in \mathcal{W}$. Hence, there exists a solution $(v,m,\rho)\in C^2(\Gamma)\times \mathcal{W}\times \mathbb{R}$ to the mean field games system \eqref{eq: MFG system}. If $F$ is locally Lipschitz continuous, then $v\in C^{2, 1}(\Gamma)$ 
from the final part of Theorem \ref{thm: existence ergodic}. 
\end{proof}

\begin{proof}
[Proof of uniqueness in Theorem \ref{thm: main result with F bounded}]We
assume that $F$ is strictly increasing and that there exist two solutions $\left(v_{1},m_{1},\rho_{1}\right)$
and $\left(v_{2},m_{2},\rho_{2}\right)$ of \eqref{eq: MFG system}. We set $\overline{v}=v_{1}-v_{2},\overline{m}=m_{1}-m_{2}$ and $\overline{\rho}=\rho_{1}-\rho_{2}$
and write the equations for $\overline{v},\overline{m}$ and $\overline{\rho}$
\begin{equation}
\begin{cases}
-\mu_{\alpha}\partial^{2}\overline{v}+H\left(x,\partial v_{1}\right)-H\left(x,\partial v_{2}\right)+\overline{\rho}-\left(F\left(m_{1}\right)-F\left(m_{2}\right)\right)=0, & \text{in \ensuremath{\Gamma_{\alpha}\backslash\mathcal{V}}},\\
-\mu_{\alpha}\partial^{2}\overline{m}-\partial\left(m_{1}\partial_{p}H\left(x,\partial v_{1}\right)\right) +  \partial\left( m_{2}\partial_{p}H\left(x,\partial v_{2}\right)\right)=0, &
 \text{in \ensuremath{\Gamma_{\alpha}\backslash\mathcal{V}}},\\
\overline{v}|_{\Gamma_{\alpha}}\left(\nu_{i}\right)=\overline{v}|_{\Gamma_{\beta}}\left(\nu_{i}\right),\quad
\dfrac{\overline{m}|_{\Gamma_{\alpha}}\left(\nu_{i}\right)}{\gamma_{i\alpha}}=\dfrac{\overline{m}|_{\Gamma_{\beta}}\left(\nu_{i}\right)}{\gamma_{i\beta}}, &
 \alpha,\beta\in\mathcal{A}_{i}, i\in I, \\
{\displaystyle \sum_{\alpha\in\mathcal{A}_{i}}\gamma_{i\alpha}\mu_{\alpha}\partial_{\alpha}\overline{v}\left(\nu_{i}\right)}=0, & i\in I,\\
{\displaystyle \sum_{\alpha\in\mathcal{A}_{i}}   n_{i\alpha} \left[m_{1}|_{\Gamma_{\alpha}}\left(\nu_{i}\right)\partial_{p}H\left(\nu_{i},\partial v_{1}|_{\Gamma_\alpha} \left(\nu_{i}\right)\right)-m_{2}|_{\Gamma_{\alpha}}\left(\nu_{i}\right)\partial_{p}H\left(\nu_{i},\partial v_{2}|_{\Gamma_\alpha}\left(\nu_{i}\right)\right)\right]}\\
+{\displaystyle \sum_{\alpha\in\mathcal{A}_{i}}\mu_{\alpha}\partial_{\alpha}\overline{m}\left(\nu_{i}\right)}=0, & i\in I,\\
\ds \int_{\Gamma}\overline{v}dx=0,\quad \int_{\Gamma}\overline{m}dx=0.
\end{cases}
\end{equation}
Multiplying the equation for $\overline{v}$ by $\overline{m}$ and
integrating over $\Gamma_{\alpha}$, we get
\begin{equation}
\label{eq:20}
   \int_{\Gamma_{\alpha}}\mu_{\alpha}\partial\overline{v}\partial\overline{m}+\left[H\left(x,\partial v_{1}\right)-H\left(x,\partial v_{2}\right)+\overline{\rho}-\left(F\left(m_{1}\right)-F\left(m_{2}\right)\right)\right]\overline{m}dx
 -\left[\mu_{\alpha}\overline{m}_{\alpha}\partial\overline{v}_{\alpha}\right]_{0}^{\ell_{\alpha}}=0.
\end{equation}
Multiplying the equation for $\overline{m}$ by $\overline{v}$ and
integrating over $\Gamma_{\alpha}$, we get
\begin{align}
 & \int_{\Gamma_{\alpha}}\mu_{\alpha}\partial\overline{v}\partial\overline{m}+\left[m_{1}\partial_{p}H\left(x,\partial v_{1}\right)-m_{2}\partial_{p}H\left(x,\partial v_{2}\right)\right]\partial\overline{v}dx\label{eq: multiplying v}\\
 & -\Bigl[\overline{v}|_{\Gamma_\alpha}\left(\mu_{\alpha}\partial\overline{m}|_{\Gamma_\alpha}+m_{1}|_{\Gamma_\alpha}\partial_{p}H\left(x,\partial v_{1}|_{\Gamma_\alpha}\right)-m_{2}|_{\Gamma_\alpha}\partial_{p}H\left(x,\partial v_{2}|_{\Gamma_\alpha}\right)\right)\Bigr]_{0}^{\ell_{\alpha}}=0.\nonumber 
\end{align}
Subtracting \eqref{eq:20} to \eqref{eq: multiplying v},
summing over $\alpha\in\mathcal{A}$, assembling the terms corresponding
to a same vertex $\nu_{i}$ and taking into account the transmission
and the normalization condition for $\overline{v}$ and $\overline{m}$,
we obtain
\begin{align*}
0=
 & \sum_{\alpha\in\mathcal{A}}\int_{\Gamma_{\alpha}}\left(m_{1}-m_{2}\right)\left[F\left(m_{1}\right)-F\left(m_{2}\right)\right]dx\\
 & +\sum_{\alpha\in\mathcal{A}}\int_{\Gamma_{\alpha}}m_{1}\left[H\left(x,\partial v_{2}\right)-H\left(x,\partial v_{1}\right)+ \partial_{p}H\left(x,\partial v_{1}\right)\partial\overline{v}\right]dx\\
 & +\sum_{\alpha\in\mathcal{A}}\int_{\Gamma_{\alpha}}m_{2}\left[H\left(x,\partial v_{1}\right)-H\left(x,\partial v_{2}\right)-\partial_{p}H\left(x,\partial v_{2}\right)\partial\overline{v}\right]dx.
\end{align*}
Since $F$ is strictly monotone then the first sum is non-negative.
Moreover, by the convexity of $H$ and the positivity of $m_{1},m_{2}$,
the last two sums are non-negative. Therefore, we have that $m_{1}=m_{2}$.
From Theorem \ref{thm: existence ergodic}, we finally obtain $v_{1}=v_{2}$
and $\rho_{1}=\rho_{2}$.
\end{proof}

\begin{proof}
[Proof of Theorem \ref{thm: MFG system} for a general coupling $F$] We only need to modify the proof of existence.

We now truncate the coupling function as follows:
\[
F_{n}\left(r\right)=\begin{cases}
F\left(r\right), & \text{if }\left|r\right|\le n,\\
F\left(\dfrac{r}{\left|r\right|}n\right), & \text{if }\left|r\right|\ge n.
\end{cases}
\]
Then $F_{n}$ is continuous, bounded below by $-M$ as in \eqref{ineq: F is bounded from below} and bounded above by some constant $C_n$. By Theorem \ref{thm: main result with F bounded},
for all $n\in\mathbb{N}$, there exists a unique solution $\left(v_{n},m_{n},\rho_{n}\right)\in C^{2}\left(\Gamma\right)\times\mathcal{W}\times\mathbb{R}$
of the mean field game system \eqref{eq: MFG system} where $F$ is
replaced by $F_{n}$. We wish to pass to the limit as $n\to +\infty$. We proceed in  several steps:

\emph{Step 1:  $\rho_{n}$ is bounded from below}. Multiplying the HJB equation in \eqref{eq: MFG system}
by $m_{n}$ and the Fokker-Planck equation in \eqref{eq: MFG system}
by $v_{n}$, using integration by parts and the transmission conditions, we obtain that
\begin{equation}
\sum_{\alpha\in\mathcal{A}}\int_{\Gamma_{\alpha}}\mu_{\alpha}\partial v_{n}\partial m_{n}dx+\int_{\Gamma}H\left(x,\partial v_{n}\right)m_{n}dx+\rho_{n}=\int_{\Gamma}F_{n}\left(m_{n}\right)m_{n}dx,\label{eq: HJn}
\end{equation}
and
\begin{equation}
\sum_{\alpha\in\mathcal{A}}\int_{\Gamma_{\alpha}}\mu_{\alpha}\partial v_{n}\partial m_{n}dx+\int_{\Gamma}\partial_{p}H\left(x,\partial v_{n}\right)m_{n}\partial v_{n}dx=0.\label{eq: FKn}
\end{equation}
Subtracting the two equations, we obtain
\begin{equation}
\rho_{n}=\int_{\Gamma}F_{n}\left(m_{n}\right)m_{n}dx+\int_{\Gamma}\left[\partial_{p}H\left(x,\partial v_{n}\right)\partial v_{n}-H\left(x,\partial v_{n}\right)\right]m_{n}dx.\label{eq: formula rho_n}
\end{equation}
In what follows, the constant $C$ may vary from line to line but remains independent of $n$. From (\ref{eq: Hamiltonian is convex}), we see that $\partial_{p}H\left(x,\partial v_{n}\right)\partial v_{n}-H\left(x,\partial v_{n}\right)\ge -H(x,0)\ge -C$.  Therefore
\begin{equation}
  \label{eq:21}
\rho_{n}  \ge\int_{\Gamma}F_{n}\left(m_{n}\right)m_{n}dx - C \int_{\Gamma} m_n dx
  =\int_{\Gamma}F_{n}\left(m_{n}\right)m_{n}dx-C.
\end{equation}
Hence, since $F_n+M\ge0$ and $\int_{\Gamma}m_n dx=1$, we get that $\rho_{n}$ is bounded from below by $-M-C$ independently of $n$.

\emph{Step 2: $\rho_{n} $ and $\int_{\Gamma}F_{n}\left(m_{n}\right)dx$
are uniformly bounded}. By Theorem \ref{thm: existence and uniqueness FK}, there exists a
positive solution $w\in W$ of \eqref{eq: Fokker-Planck}-\eqref{eq: normalization and positive} with $b=0$.
It yields
\begin{displaymath}
{\displaystyle \sum_{\alpha\in\mathcal{A}}
\int_{\Gamma_{\alpha}}\mu_{\alpha}\partial w\partial udx=0},  \text{for all }u\in V,\quad\quad \hbox{and}\quad 
  \int_{\Gamma}wdx=1.
\end{displaymath}
Multiplying the HJB
equation of \eqref{eq: MFG system} by $w$, using integration by
parts and the Kirchhoff condition, we get
\[
\underset{=0}{\underbrace{\sum_{\alpha\in\mathcal{A}}\int_{\Gamma_{\alpha}}\mu_{\alpha}\partial v_{n}\partial wdx}}+\int_{\Gamma}H\left(x,\partial v_{n}\right)wdx+\rho_{n}\underset{=1}{\underbrace{\int_{\Gamma}wdx}}=\int_{\Gamma}F_{n}\left(m_{n}\right)wdx.
\]
This implies, using \eqref{eq: super quadratic}, \eqref{eq: estimate FK} and $F_n+M\ge0$,
\begin{align}
\rho_{n} & =\int_{\Gamma}F_{n}\left(m_{n}\right)wdx-\int_{\Gamma}H\left(x,\partial v_{n}\right)wdx \nonumber\\
 & \le\left\Vert w\right\Vert _{L^{\infty}\left(\Gamma\right)}\int_{\Gamma}\left(F_{n}(m_{n})+M\right)dx-M-\int_{\Gamma}\left(C_{0}\left|\partial v_{n}\right|^{q}-C_{1}\right)wdx \nonumber\\
 & \le C\int_{\Gamma}F_{n}\left(m_{n}\right)dx+C-\int_{\Gamma}C_{0}\left|\partial v_{n}\right|^{q}wdx.\label{ineq: rho_n is bounded from above}
\end{align}
Thus, by \eqref{eq:21}, we have
\begin{equation}
-M-C\le\int_{\Gamma}F_{n}\left(m_{n}\right)m_{n}dx-C\le\rho_{n}\le C\int_{\Gamma}F_{n}\left(m_{n}\right)dx+C.\label{ineq: rho, F(m) and F(m)m}
\end{equation}
Let $K>0$ be a constant to be chosen later. We have
\begin{align}
\int_{\Gamma}F_{n}\left(m_{n}\right)dx 
 & \le\dfrac{1}{K}\int_{m_{n}\ge K}\left[F_{n}\left(m_{n}\right)+M\right]m_{n}dx+\sup_{0\le r\le K}F (r)\int_{m_{n}\le K}dx\nonumber \\
 & \le\dfrac{1}{K}\int_{\Gamma}F_{n}\left(m_{n}\right)m_{n}dx+\dfrac{M}{K}+C_K,\label{ineq: F(m)m and F(m)}
\end{align}
where $C_K$ is independent of $n$. Choosing $K=2C$ where $C$ is the constant in \eqref{ineq: rho, F(m) and F(m)m}, we get by combining 
(\ref{ineq: F(m)m and F(m)}) with  \eqref{ineq: rho, F(m) and F(m)m} that 
$\int_{\Gamma}F_n(m_n)m_n\le C$.
Using \eqref{ineq: F(m)m and F(m)} again, we obtain 
$
\int_{\Gamma}F_n(m_n)dx\le C$.
Hence, from \eqref{ineq: rho, F(m) and F(m)m}, we conclude that  $|\rho_n|+ |\int_{\Gamma}F_n(m_n)dx|\le C$.

\emph{Step 3: Prove that $ F_{n}\left(m_{n}\right) $
is uniformly integrable and $v_{n} $ and $ m_{n} $
are uniformly bounded  respectively in $C^{1}\left(\Gamma\right)$ and $W$}. Let $E$ be a measurable with $|E|=\eta$. By \eqref{ineq: F(m)m and F(m)} with $\Gamma$ is replaced by $E$, we have
\begin{align*}
\int_{E}F_n (m_n)   m_n dx & \le\dfrac{1}{K}\int_{E\cap \left\{m_n\ge K \right\}}F_{n}\left(m_{n}\right)m_{n}dx+\dfrac{M}{K}+\sup_{0\le r\le K}F(r)\int_{E\cap \left\{m_n\le K \right\}}dx\\
 & \le \dfrac{C+M}{K}+C_{K}\eta,
\end{align*}
since $\int_{E} F_n(m_n) m_n dx\le C$ and $\sup_{0\le r \le K}F_{n}(r)\le \sup_{0\le r \le K}F(r):=C_K$. Therefore, for all $\varepsilon>0$, we may
choose $K$ such that $(C+M)/K\le\varepsilon/2$ and then $\eta$ such that $C_K \eta\le\varepsilon/2$  and get
\[
\int_{E}F_{n}\left(m_{n}\right)dx\le \varepsilon,\quad\text{for all \ensuremath{E} which satisfies \ensuremath{\left|E\right|\le\eta}},
\]
which proves the uniform integrability of $\left\{F_n (m_n) \right\}_n$.

Next, since $ \rho_{n} $ and $ \int_{\Gamma}F_{n}\left(m_{n}\right)dx $
are uniformly bounded, we infer from  \eqref{ineq: rho_n is bounded from above} that
$\partial v_{n} $ is uniformly bounded in $L^{q}\left(\Gamma\right)$.
Since  by the condition $\int_{\Gamma}v_ndx=0$, there exists $\overline{x}_n$ such that $v_n(\overline{x}_n)=0$, we infer from the latter bound that 
$v_n$ is uniformly bounded in $L^{\infty}(\Gamma)$. 
Using the HJB equation in \eqref{eq: MFG system} and Remark \ref{rem: derivative of H is bounded}, we get
\[
\mu_{\alpha}|\partial^2 v_n|\le |H(x,\partial v_n)| +|F_n(m_n)|+|\rho_n| \le C_q (|\partial v_n|^q+1)+|F_n(m_n)|+|\rho_n|.
\]
 We obtain that $ \partial^{2}v_{n} $ is uniformly bounded in $L^{1}\left(\Gamma\right)$, which implies that   $v_n$ is uniformly bounded in $C^1(\Gamma)$.
Therefore the sequence of functions  $ C_q (|\partial v_n|^q+1)+|F_n(m_n)|+|\rho_n|$ is uniformly integrable, and so is  $ \partial^{2}v_{n} $. This implies that $ \partial v_{n} $
is equicontinuous. Hence, $\left\{ v_{n}\right\} $ is relatively
compact in $C^{1}\left(\Gamma\right)$ by Arzel{\`a}-Ascoli's theorem.
Finally, from the Fokker-Planck equation and Theorem \ref{thm: existence and uniqueness FK},
since $ \partial_{p}H\left(x,\partial v_{n}\right) $
is uniformly bounded in $L^{\infty}\left(\Gamma\right)$, we obtain
that $m_{n}$ is uniformly bounded in $W$.

\emph{Step 4: Passage to the limit}

From Step 1 and 2, since $\left\{ \rho_{n}\right\} $ is uniformly bounded,  there exists $\rho\in\mathbb{R}$
such that $\rho_{n}\rightarrow\rho$  up to the extraction
of subsequence. From Step 3, there exists $m\in W$ such that $m_{n}\rightharpoonup m$
in $W$ and $m_{n}\to m$ almost everywhere, up to the extraction
of subsequence. Also from Step 3, since $F_{n}\left(m_{n}\right) $
is uniformly integrable, from Vitali theorem, 
$
\lim_{n\rightarrow\infty}\int_{\Gamma}F_{n}\left(m_{n}\right)\tilde{w}dx=\int_{\Gamma}F\left(m\right)\tilde{w}dx,\quad\text{for all }\tilde{w}\in W$.
From Step 3, up to the extraction of subsequence, there exists $v\in C^{1}\left(\Gamma\right)$
such that $v_{n}\rightarrow v$ in $C^{1}\left(\Gamma\right)$. Hence,
$\left(v,\rho,m\right)$ satisfies the weak form of the MFG system:
\[
\sum_{\alpha\in\mathcal{A}}\int_{\Gamma_{\alpha}}\mu_{\alpha}\partial v\partial\tilde{w}dx+\int_{\Gamma}  \left(H\left(x,\partial v\right)+\rho\right)\tilde{w}dx=\int_{\Gamma}F\left(m\right)\tilde{w}dx,\quad\text{for all }\tilde{w}\in W,
\]
and
\[
\sum_{\alpha\in\mathcal{A}}\int_{\Gamma_{\alpha}}\mu_{\alpha}\partial m\partial\tilde{v}dx+\int_{\Gamma}\partial_{p}H\left(x,\partial v\right)m\partial\tilde{v}dx=0,\quad\text{for all }\tilde{v}\in V.
\]
Finally, we prove the regularity for the solution of~(\ref{eq: MFG system})
. Since $m\in W$,  $m|_{\Gamma_\alpha}\in C^{0,\sigma}$ for some constant $\sigma\in (0,1/2)$ and all $\alpha\in \cA$. By Theorem \ref{thm: existence ergodic}, $v\in C^2(\Gamma)$  ($v\in C^{2,\sigma}(\Gamma)$ if $F$ is locally Lipschitz continuous). Then, by Theorem \ref{thm: existence and uniqueness FK}, we see that $m\in \mathcal{W}$.
 If $F$ is locally Lipschitz continuous, this implies that $v\in C^{2,1}(\Gamma)$.
 We also obtain that $v$ and $m$ satisfy the Kirchhoff and transmission conditions in \eqref{eq: MFG system}. The proof is done.
\end{proof}

\noindent{\bf Acknowledgement.} \quad  
 The  authors were partially  supported by ANR project ANR-16-CE40-0015-01.


{\small

}

\begin{thebibliography}{10}

\bibitem{Achdou2013}
Y.~Achdou.
\newblock Finite difference methods for mean field games.
\newblock In {\em Hamilton-{J}acobi equations: approximations, numerical
  analysis and applications}, volume 2074 of {\em Lecture Notes in Math.},
  pages 1--47. Springer, Heidelberg, 2013.


\bibitem{MR3057137}
Y.~Achdou, F.~Camilli, A.~Cutr{{\`\i}}, and N.~Tchou,  \newblock Hamilton--{J}acobi
  equations constrained on networks,  \newblock{\em NoDEA Nonlinear Differential Equations
  Appl.} \textbf{20} (2013), no.~3, 413--445.


\bibitem{MR2679575}
Y.~Achdou and I.~Capuzzo-Dolcetta.
\newblock Mean field games: numerical methods.
\newblock {\em SIAM J. Numer. Anal.}, 48(3):1136--1162, 2010.


\bibitem{MR3358634}
Y.~Achdou, S.~Oudet, and N.~Tchou, \newblock{Hamilton-{J}acobi equations for
  optimal control on junctions and networks},\newblock{\em ESAIM Control Optim. Calc. Var.}
  \textbf{21} (2015), no.~3, 876--899.

\bibitem{AP2016} Y. Achdou, A. Porretta,
 \newblock Convergence of a finite difference scheme to weak solutions of the system of partial differential equation arising
 in  mean field games
  \newblock{\em SIAM J. Numer. Anal.},   54 (2016), no. 1, 161-186




\bibitem{BMP1983}
L.~Boccardo, F.~Murat, and J.-P.~Puel.
\newblock Existence de solutions faibles pour des {\'e}quations elliptiques
  quasi-lin{\'e}aires {\`a} croissance quadratique.
\newblock In {\em Nonlinear partial differential equations and their
  applications. {C}oll{\`e}ge de {F}rance {S}eminar, {V}ol. {IV} ({P}aris,
  1981/1982)}, volume~84 of {\em Res. Notes in Math.}, pages 19--73. Pitman,
  Boston, Mass.-London, 1983.


\bibitem{BF1991}
F.~Brezzi and M.~Fortin.
\newblock {\em Mixed and hybrid finite element methods}, volume~15 of {\em
  Springer Series in Computational Mathematics}.
\newblock Springer-Verlag, New York, 1991.


\bibitem{CM2016}
F.~Camilli and C.~Marchi.
\newblock Stationary mean field games systems defined on networks.
\newblock {\em SIAM J. Control Optim.}, 54(2):1085--1103, 2016.

 \bibitem{CMS2013}
 F.~Camilli, C.~Marchi, and D.~Schieborn.
 \newblock The vanishing viscosity limit for {H}amilton-{J}acobi equations on
   networks.
 \newblock {\em J. Differential Equations}, 254(10):4122--4143, 2013.

\bibitem{cardaliaguet2010}
P.~Cardaliaguet.
\newblock Notes on mean field games.
\newblock preprint, 2011.

\bibitem{CD2017}
R. Carmona, F. Delarue.
\newblock{ \em Probabilistic Theory of Mean Field Games with Applications I-II.}
\newblock{Springer}, 2017. 


\bibitem{khangdao_PhD}
M-K. Dao.
\newblock{Ph.D. thesis}, 2018. 






\bibitem{MR2448938}
K.-J. Engel, M.~Kramar~Fijav{\v{z}}, R.~Nagel, and E.~Sikolya, \emph{Vertex
  control of flows in networks}, Netw. Heterog. Media \textbf{3} (2008), no.~4,
  709--722. 

\bibitem{EG2004}
A.~Ern and J.-L. Guermond.
\newblock {\em Theory and practice of finite elements}, volume 159 of {\em
  Applied Mathematical Sciences}.
\newblock Springer-Verlag, New York, 2004.



\bibitem{MR3640560}
N.~Forcadel, W.~Salazar, and M.~Zaydan, \newblock{Homogenization of second order
  discrete model with local perturbation and application to traffic flow},
\newblock{\em  Discrete Contin. Dyn. Syst.} \textbf{37} (2017), no.~3, 1437--1487.



\bibitem{FS2000}
M.~I. Freidlin and S.-J. Sheu.
\newblock Diffusion processes on graphs: stochastic differential equations,
  large deviation principle.
\newblock {\em Probab. Theory Related Fields}, 116(2):181--220, 2000.

\bibitem{FW1993}
M.~I. Freidlin and A.~D. Wentzell.
\newblock Diffusion processes on graphs and the averaging principle.
\newblock {\em Ann. Probab.}, 21(4):2215--2245, 1993.



\bibitem{MR2328174}
M.~Garavello and B.~Piccoli, \emph{Traffic flow on networks}, AIMS Series on
  Applied Mathematics, vol.~1, American Institute of Mathematical Sciences
  (AIMS), Springfield, MO, 2006, Conservation laws models. 


\bibitem{GT2001}
D.~Gilbarg and N.~S. Trudinger.
\newblock {\em Elliptic partial differential equations of second order}.
\newblock Classics in Mathematics. Springer-Verlag, Berlin, 2001.
\newblock Reprint of the 1998 edition.


\bibitem{GPV2014}
D.~A. Gomes, S.~Patrizi, and V.~Voskanyan.
\newblock On the existence of classical solutions for stationary extended mean
  field games.
\newblock {\em Nonlinear Anal.}, 99:49--79, 2014.


\bibitem{GP2016-A}
D.~A. Gomes and E.~Pimentel.
\newblock Local regularity for mean-field games in the whole space.
\newblock {\em Minimax Theory Appl.}, 1(1):65--82, 2016.

\bibitem{GPS2015}
D.~A. Gomes, E.~A. Pimentel, and H.~S{\'a}nchez-Morgado.
\newblock Time-dependent mean-field games in the subquadratic case.
\newblock {\em Comm. Partial Differential Equations}, 40(1):40--76, 2015.

\bibitem{GLL2003}
O.~Gu{\'e}ant, J.-M. Lasry, and P.-L. Lions.
\newblock Mean field games and applications.
\newblock In {\em Paris-{P}rinceton {L}ectures on {M}athematical {F}inance
  2010}, volume 2003 of {\em Lecture Notes in Math.}, pages 205--266. Springer,
  Berlin, 2011.

\bibitem{HMC2007-B}
M.~Huang, P.~E. Caines, and R.~P. Malham{\'e}.
\newblock An invariance principle in large population stochastic dynamic games.
\newblock {\em J. Syst. Sci. Complex.}, 20(2):162--172, 2007.

\bibitem{HCM2007-A}
M.~Huang, P.~E. Caines, and R.~P. Malham{\'e}.
\newblock Large-population cost-coupled {LQG} problems with nonuniform agents:
  individual-mass behavior and decentralized {$\epsilon$}-{N}ash equilibria.
\newblock {\em IEEE Trans. Automat. Control}, 52(9):1560--1571, 2007.

\bibitem{HMC2006}
M.~Huang, R.~P. Malham{\'e}, and P.~E. Caines.
\newblock Large population stochastic dynamic games: closed-loop
  {M}c{K}ean-{V}lasov systems and the {N}ash certainty equivalence principle.
\newblock {\em Commun. Inf. Syst.}, 6(3):221--251, 2006.



\bibitem{MR3023064}
C.~Imbert, R.~Monneau, and H.~Zidani, \newblock{A {H}amilton-{J}acobi approach to
  junction problems and application to traffic flows},  \newblock {\em ESAIM Control Optim.
  Calc. Var.} \textbf{19} (2013), no.~1, 129--166. 

\bibitem{MR3621434}
C.~Imbert and R.~Monneau, \newblock{Flux-limited solutions for quasi-convex
  {H}amilton-{J}acobi equations on networks}, \newblock {\em Ann. Sci. {\'E}c. Norm. Sup{\'e}r.}
  (4) \textbf{50} (2017), no.~2, 357--448.

\bibitem{LL2006-A}
J.-M. Lasry and P.-L. Lions.
\newblock Jeux {\`a} champ moyen. {I}. {L}e cas stationnaire.
\newblock {\em C. R. Math. Acad. Sci. Paris}, 343(9):619--625, 2006.

\bibitem{LL2006-B}
J.-M. Lasry and P.-L. Lions.
\newblock Jeux {\`a} champ moyen. {II}. {H}orizon fini et contr{\^o}le optimal.
\newblock {\em C. R. Math. Acad. Sci. Paris}, 343(10):679--684, 2006.

\bibitem{LL2007}
J.-M. Lasry and P.-L. Lions.
\newblock Mean field games.
\newblock {\em Jpn. J. Math.}, 2(1):229--260, 2007.

\bibitem{MR3556345}
P-L. Lions and P.~Souganidis,  \newblock Viscosity solutions for junctions: well
  posedness and stability, \newblock { \em Atti Accad. Naz. Lincei Rend. Lincei Mat. Appl.}
  \textbf{27} (2016), no.~4, 535--545. 

\bibitem{MR3729588}
P-L. Lions and P.~Souganidis,  \emph{Well-posedness for multi-dimensional junction problems with
  {K}irchoff-type conditions}, Atti Accad. Naz. Lincei Rend. Lincei Mat. Appl.
  \textbf{28} (2017), no.~4, 807--816. 

439



\bibitem{Parma} A. Porretta, \newblock {Weak solutions to Fokker-Planck equations and mean field games}, \newblock {\em Arch. Rational Mech. Anal.}  {\bf 216} (2015),  1-62.

\bibitem{Pumi} A. Porretta, \newblock {On the weak theory for mean field games systems},  \newblock {\em Boll. U.M.I.} {\bf 10} (2017),  no.~3, 411-439

\bibitem{Below1988}
J.~von Below.
\newblock Classical solvability of linear parabolic equations on networks.
\newblock {\em J. Differential Equations}, {\bf 72} (1988),  no.~2, 316--337.


\end{thebibliography}
\end{document}